\newcommand{\quot}[2]{#1/#2}
\newcommand{\tdL}[1]{#1^\infty}
\newcommand{\quotpi}[1]{[#1]}
\newcommand{\ignore}[1]{}
\newcommand{\moment}{{\mathfrak m}}
\newcommand{\noment}{{\mathfrak n}}
\newcommand{\david}[1]{{\color{orange}\footnote{\color{orange}DAVID: #1}}}
\newcommand{\yoav}[1]{{\color{red}\footnote{\color{red}YOAV: #1}}}
\newcommand{\dn}{{\square}}
\newcommand{\dd}{{\sdiamond}}
\newcommand{\bc}{{\fullmoon}}
\newcommand{\lb}{\langle}
\newcommand{\rb}{\rangle}
\newcommand{\gog}{\mathfrak S}
\DeclareFontFamily{U}{MnSymbolC}{}
\DeclareSymbolFont{MnSyC}{U}{MnSymbolC}{m}{n}
\DeclareMathSymbol{\prediamondplus}{\mathbin}{MnSyC}{"7C}
\newcommand{\diamondplus}{{\prediamondplus}}
\DeclareMathSymbol{\prediamonddot}{\mathbin}{MnSyC}{"7E}
\newcommand{\diamonddot}{{\prediamonddot}}
\DeclareMathSymbol{\sdiamond}{\mathbin}{MnSyC}{"6E}
\DeclareFontShape{U}{MnSymbolC}{m}{n}{
    <-6>  MnSymbolC5
   <6-7>  MnSymbolC6
   <7-8>  MnSymbolC7
   <8-9>  MnSymbolC8
   <9-10> MnSymbolC9
  <10-12> MnSymbolC10
  <12->   MnSymbolC12}{}
\newcommand{\dtan}{{\dd^\infty}}
\newcommand{\ctan}{{\diamonddot}^\infty}
\theoremstyle{plain}
\newtheorem{theorem}{Theorem}[section]
\newtheorem{lemma}[theorem]{Lemma}
\newtheorem{proposition}[theorem]{Proposition}
\newtheorem{corollary}[theorem]{Corollary}
\theoremstyle{definition}
\newtheorem{definition}[theorem]{Definition}
\newtheorem{example}[theorem]{Example}
\author[$\mathsection$]{David Fern\'andez-Duque}
\author[$\dagger$]{Yo\`av Montacute}
\affil[$\mathsection$]{Department of Mathematics, Ghent University} 
\affil[ ]{ICS of the Czech Academy of Sciences}
\affil[ ]{\texttt{david.fernandezduque@ugent.be}}
\affil[$\dagger$]{Computer Laboratory, University of Cambridge}
\affil[ ]{\texttt {yoav.montacute@cl.cam.ac.uk}}
\begin{document}

\title{Dynamic Tangled Derivative Logic of Metric Spaces}

\maketitle
 
\begin{abstract}
Dynamical systems are abstract models of interaction between space and time.
They are often used in fields such as physics and engineering to understand complex processes, but due to their general nature, they have found applications for studying computational processes, interaction in multi-agent systems, machine learning algorithms and other computer science related phenomena. 
In the vast majority of applications, a dynamical system consists of the action of a continuous `transition function' on a metric space.
In this work, we consider decidable formal systems for reasoning about such structures.

Spatial logics can be traced back to the 1940's, but our work follows a more dynamic turn that these logics have taken due to two recent developments: the study of the topological $\mu$-calculus, and the the integration of linear temporal logic with logics based on the Cantor derivative.
In this paper, we combine dynamic topological logics based on the Cantor derivative and the `next point in time' operators with an expressively complete fixed point operator to produce a combination of the topological $\mu$-calculus with linear temporal logic.
We show that the resulting logics are decidable and have a natural axiomatisation.
Moreover, we prove that these logics are complete for interpretations on the Cantor space, the rational numbers, and subspaces thereof.
\end{abstract}


\section{Introduction}
 
Our planet is orbited by a myriad of man-made satellites, whose movement must be predicted and controlled to e.g.~avoid collision with other objects.
To this end, their position over time is modelled using our knowledge of physics, and the mathematical structure governing this behaviour is known as a {\em dynamical system} (Figure \ref{fig:orbits1}).
Given the initial position and momentum of a satellite, one may predict the path it will take: it may be periodic, diverge into space or crash into the earth.
In such models, both space and time are continuous, i.e.~given by Euclidean spaces; however, they can be approximated discretely for a better computational treatment, or even be represented via finite relational structures (Examples \ref{centreofgravity} and \ref{metrictokripke}).
One can thus imagine satellites moving one `tick of the clock' at a time, for a suitably small time interval, a viewpoint that is generally better suited for our purposes.

Generally speaking, a discrete time dynamical system is defined to be a topological space equipped with a transition function, representing movement.
However, for applications regarding physical space, it is more convenient to work with metric spaces rather than arbitrary topological spaces.
For the logician, metric spaces offer some technical advantages, as the logics they provide are `better-behaved', but also present additional challenges, as extra care must be taken in ensuring that the structures produced are metrisable.
\begin{figure}\label{figOrbits}
    \centering
    \includegraphics[scale=0.6]{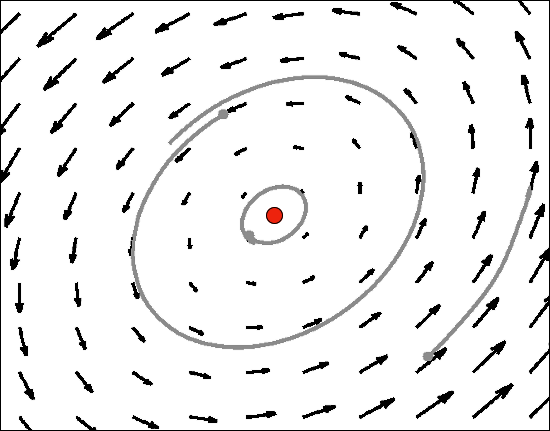}
    \caption{Orbits around a centre of gravity}
    \label{fig:orbits1}
\end{figure}
Two recent developments have taken spatial logic into a more `dynamic' direction.
The first is the development of the topological $\mu$-calculus \cite{BBFD21,Goldblatt2017Spatial}, which enriches the usual topological operators with fixed points, including Cantor's perfect core, and has applications in formal epistemology \cite{Surprise}.
The second, following a suggestion of Saveliev, is the combination of linear temporal logic with spatial logics based on the Cantor derivative~\cite{CSLpaper}, obtaining a more expressive version of Dynamic Topological Logic~\cite{artemov}.
This allows one to reason about e.g.~dense-in-themselves spaces, which are of relevance for example in chaos theory~\cite{devaney} and differential equations.
Our goal is to combine the expressive power of these two proposals and produce dynamic topological logics with topological fixed points.
We are specifically interested in dynamical systems based on metric spaces, as these are the spaces used in most applications.

Here we build on Fernández-Duque and Montacute~\cite{CSLpaper}, who consider a bi-modal language with $\dd$ interpreted as Cantor derivative and $\bc$ as `next point in time'.
In order to enrich this logic with topological fixed points, we follow Goldblatt and Hodkinson~\cite{Goldblatt2017Spatial}, who utilised results of Dawar and Otto~\cite{DawarO09} to represent the topological $\mu$-calculus via its relatively simple but expressively complete {\em tangled fragment}~\cite{FernandezTangled,FernandezSimulability}.
The latter augments modal logic with a polyadic modality $\dtan$, in which $\dtan\Gamma$ holds in the largest subspace where each $\varphi \in \Gamma$ is dense.
This grants us the full power of the topological $\mu$-calculus while working within a formal language that is relatively tame from a combinatorial perspective.
We thus obtain the logic ${\bf K4C}^\infty$ (and various other extensions) which plays the role of the standard dynamic topological logic $\bf S4C $.

Despite the additional expressive power due to the Cantor derivative and definability of topological fixed points, we show that ${\bf K4C}^\infty$ enjoys the same desirable properties of $\bf S4C $:
it is naturally axiomatisable and decidable over the class of all metric spaces.
Moreover, we extend a result of Mints and Zhang~\cite{MintsZ05} which states that $\bf S4C$ is complete for the Cantor space, by showing that ${\bf K4DC}^\infty$ (the extension of ${\bf K4C}^\infty$ with the `seriality' axiom) is sound and complete both for the Cantor set and for the set of rational numbers.
Aside from the above-mentioned logics, we also consider the logics  ${\bf K4I}^\infty$ and ${\bf K4DI}^\infty$ for dynamical systems where the transition function is an {\em immersion} (i.e.\ it preserves the Cantor derivative).
All of these logics are decidable, each logic with the $\rm D$ axiom is shown to be complete for the Cantor space and for the rational numbers, and logics without these axioms are shown to be complete for subspaces of these two metric spaces.

Working with the $\mu$-calculus is notoriously challenging, and despite the simplicity gained by working in the tangled fragment, there are still many non-trivial hurdles to overcome.
In order to deal with fixed points, we follow techniques pioneered by Fine~\cite{Fin74c} based on {\em final points}, already shown by Bezhanishvili et al.~\cite{BBFD21} to be useful for working with the topological $\mu$-calculus.
In our work, we further refine these techniques in order to deal with the interactions between the topology and the transition function.
Completeness for the Cantor space and the rational numbers is obtained via the technique of {\em dynamic p-morphisms}.
To apply it in our setting, we employ Kripke frames with limits, as used by Kremer and Mints~\cite{KremerM07}, along with the world-duplication construction from derivational modal logic (see e.g.~\cite{BBFD21}).
Our general method uniformly yields results for these two metric spaces and their closed subspaces. The current work is essential for the aixomatisation of $\mathbf{DTL}$ with the Cantor derivative, which currently has only been achieved in the setting of \emph{scattered spaces}; see Fern\'andez-Duque and Montacute\cite{Untangled}.

\subsection{Dynamical systems in computer science}
Dynamical systems are mathematical models of movement, routinely used in many pure and applied sciences, including computer science.
To cite some examples, in data-driven dynamical systems, many problems may be solved through techniques from dynamical systems as was recently suggested by Brunton and Kutz \cite{brunton2019data}. Dynamic theoretic approaches to machine learning also became very prominent in recent years. 
One instance of this was introduced by
Weinan~\cite{weinan2017proposal} and uses dynamical systems to model nonlinear functions employed in machine learning. 
Lin and Antsaklis's~\cite{hybrid} hybrid dynamical systems have been at the centre of research in control theory,  artificial intelligence and computer-aided verification. 
Dynamical systems are also present in the field of humonoid robotics, for instance in the study of movement learning via nonlinear dynamics used by Ijspeert et al.~\cite{robotics}. 
Mortveit and Reidys~\cite{sequential} introduced sequential dynamical systems which are discrete dynamical systems generalising notions such as cellular automata and  providing a framework through which one can study dynamical processes in graphs.
Another instance of dynamical systems in computer science can be found in linear dynamical systems, which are dynamical systems with  linear transformations (see Example~\ref{exCore}). 
Examples of such systems in computer science include Markov chains, linear recurrence sequences, and linear differential equations. 
Furthermore, there is a strong established relationship between dynamical systems and algorithms. 
This is present for example in the work of Hanrot, Pujol and Stehl\'e~\cite{hanrot2011analyzing}, and in the work of Chu~\cite{chu2008linear}.

\subsection{Outline.} In Section II we give the required definitions and notation necessary to understand the paper. 
In Section III we provide some background on the topic of dynamic topological logics. 
In section IV we share some motivation and applications for dynamic topological logic in dynamical systems, in particular in the context of metric spaces and computer science.
In Section V we introduce morphisms between models based on dynamical systems and provide some characterisation results for different instances of such models. 
In Section VI we present the canonical model for our base logics, and in Section VII we construct a finitary accessibility relation which will be used to prove completeness for different logics.
In Section VIII we introduce the notions of story and $\Phi$-morphism and use the previous results to prove the finite model property and Kripke $d$-completeness for different languages above $\mathbf{K4C}^\infty$.
In Section IX we prove topological $d$-completeness for several logics above $\mathbf{K4C}^\infty$ with respect to $\mathbb{Q}$ and the Cantor set.
Section X concludes with some final remarks.

\section{Preliminaries}\label{secPrel}
In this section we introduce the notation and definitions required for understanding this paper.
We work with the general setting of {\em derivative spaces}, in order to unify the metric space and Kripke semantics of our logics.

\begin{definition} [metric space]
A metric space is a pair $\mathfrak {X}=\langle X,\delta\rangle$, where $X$ is a set and $\delta\colon X\times X\to [0,\infty)$ is a {\em metric,} i.e. a map satisfying the following conditions for all $x,y,z\in X$:
\begin{itemize}
\item $\delta(x,y) = 0$ iff $x=y$

\item $\delta(x,y) = \delta(y,x)$

\item $\delta(x,y) + \delta (y,z) \leq \delta(y,z) $.

\end{itemize}
\end{definition}

The main operation on metric spaces that we are interested in is the {\em Cantor derivative.}

\begin{definition}[Cantor derivative] Let $\mathfrak {X}=\langle X,\delta \rangle$ be a metric space. Given $S\subseteq X$, the \emph{Cantor derivative} of $S$ is the set $d(S)$ of all limit points of $S$, i.e.\ \[x\in  d(S) \iff \forall \varepsilon > 0 \exists y \in X ( 0<\delta(x,y)<\varepsilon).\]
We may write $d(S)$ or $dS$ indistinctly.
\end{definition}

When working with more than one metric space, we may denote the Cantor derivative of the space $\langle X,\delta \rangle$ by $d_X $.
Given subsets $A,B\subseteq X$, it is not difficult to verify that $d $ satisfies the following properties:
\begin{enumerate}
\item\label{itDOne} $d(\varnothing)=\varnothing$;
\item\label{itDTwo} $d(A\cup B)= d(A)\cup d(B)$;
\item\label{itDThree} $dd(A) \subseteq  d(A)$.
\end{enumerate}
\noindent In fact, these conditions lead to the more general notion of {\em derivative spaces}:\footnote{Derivative spaces are a special case of {\em derivative algebras} introduced by Esakia \cite{EsakiaAlgebra}, where $\wp(X)$ is replaced by an arbitrary Boolean algebra.
We moreover work with `transitive' derivative algebras, so that the definition is stronger than that of e.g.~\cite{BBFD21}.
}

\begin{definition}
A {\em derivative space} is a pair $\mathfrak{X}=\lb X,\rho\rb$, where $X$ is a set and $\rho\colon \wp(X) \to \wp(X)$ is a map satisfying properties \ref{itDOne}-\ref{itDThree} above, where $d=\rho$.
\end{definition} 

Accordingly, if $\mathfrak X=\langle X,
\delta \rangle$ is a metric space and $d $ is the Cantor derivative on $\mathfrak X$, then $\lb X,d \rb$ is a derivative space.
However, there are other examples of derivative spaces.
The standard \emph{closure} of a subset $A$ of points in a topological space can be defined as $c(A) = A\cup d  (A)$.
Then, $\lb X,c\rb$ is also a derivative space, which satisfies the additional property $A\subseteq c(A)$; we call such derivative spaces {\em closure spaces.}
More generally, if $\lb X,\rho\rb$ is an arbitrary derivative space, we define $\dot\rho (A) := A\cup \rho(A)$; then, $\lb X,\dot\rho\rb$ is a closure space.

Another example of derivative spaces comes from transitive Kripke frames.
For the sake of succinctness, we call these frames {\em derivative frames.}
Below and throughout the text, we write $\exists x\sqsupset y \ \varphi $ instead of $\exists x (y\sqsubset x \wedge \varphi) $, and adopt a similar convention for the universal quantifier and other relational symbols.

\begin{definition}
A {\em derivative frame} is a pair $\mathfrak  F = \langle W,\sqsubset \rangle$ where $W$ is a non-empty set and $\sqsubset $ is a transitive relation on $W$.
We denote the reflexive closure of $\sqsubset $ by $\sqsubseteq$.
\end{definition}

We chose the notation $\sqsubset $ because it is suggestive of a transitive relation, but remains ambiguous regarding reflexivity, as there may be irreflexive and reflexive points.
We also write $w\equiv v$ if $w\sqsubseteq v$ and $v\sqsubseteq w$; the equivalence class of $w$ under $\equiv$ is called the {\em cluster} of $w$ and is denoted $C(w)$.

Given $A\subseteq W$, we define ${\downarrow_\sqsubset}$ as a map
${\downarrow_\sqsubset} \colon \wp(W) \to \wp(W)$ such that
$${\downarrow_\sqsubset}(A)= \{w\in W: \exists v\sqsupset w (v\in A)\}.$$
The following is then readily verified.

\begin{lemma}
If $\langle W,\sqsubset \rangle$ is a derivative frame, then $\langle W,{\downarrow_\sqsubset} \rangle$ is a derivative space.
\end{lemma}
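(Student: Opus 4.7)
The plan is to verify directly that the map ${\downarrow_\sqsubset}$ satisfies the three axioms \ref{itDOne}--\ref{itDThree} of a derivative space, unpacking the definition of ${\downarrow_\sqsubset}(A) = \{w \in W : \exists v \sqsupset w\ (v \in A)\}$ in each case.

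First, I would dispatch the easy properties. For \ref{itDOne}, the statement $w \in {\downarrow_\sqsubset}(\varnothing)$ would require the existence of some $v \sqsupset w$ with $v \in \varnothing$, which is vacuously impossible, hence ${\downarrow_\sqsubset}(\varnothing) = \varnothing$. For \ref{itDTwo}, I would chase equivalences: $w \in {\downarrow_\sqsubset}(A \cup B)$ iff there is some $v \sqsupset w$ with $v \in A \cup B$ iff there exists $v \sqsupset w$ with $v \in A$ or there exists $v \sqsupset w$ with $v \in B$ (distributing the existential over the disjunction), iff $w \in {\downarrow_\sqsubset}(A) \cup {\downarrow_\sqsubset}(B)$. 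Both directions are immediate and require no hypothesis on $\sqsubset$.

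The one place where the frame's assumption actually does work is \ref{itDThree}, and this is the step that would take a moment of thought rather than pure bookkeeping. Suppose $w \in {\downarrow_\sqsubset}({\downarrow_\sqsubset}(A))$. Then there exists $v \sqsupset w$ such that $v \in {\downarrow_\sqsubset}(A)$, which in turn yields $u \sqsupset v$ with $u \in A$. From $w \sqsubset v$ and $v \sqsubset u$, transitivity of $\sqsubset$ gives $w \sqsubset u$, and so $w \in {\downarrow_\sqsubset}(A)$ as required. I do not expect any real obstacle here; the only subtle point worth flagging is that transitivity is precisely what is needed (and indeed what the notation $\sqsubset$ was chosen to suggest), so the lemma is essentially a reformulation of the fact that derivative frames model the axioms of $\mathbf{K4}$.

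Putting the three verifications together establishes that $\langle W, {\downarrow_\sqsubset}\rangle$ satisfies the defining properties of a derivative space. The whole argument is short and calculational, so in the write-up I would likely present it as a single paragraph listing the three checks, highlighting only that transitivity is invoked in the third.
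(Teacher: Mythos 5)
Your verification is correct and is exactly the argument the paper has in mind: the paper states this lemma as ``readily verified'' without writing out a proof, and the intended check is precisely the direct unpacking of ${\downarrow_\sqsubset}$ against properties \ref{itDOne}--\ref{itDThree}, with transitivity of $\sqsubset$ doing the work in \ref{itDThree}. Nothing to add.
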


Dynamical derivative systems consist of a derivative space equipped with a {\em continuous} function.
Recall that if $\langle X,\delta_X\rangle$ and $\langle Y,\delta_X\rangle$ are metric spaces and $f\colon X\to Y$, then $f$ is {\em continuous} if for every $x\in X$ and every $\varepsilon>0$ there exists $\eta>0$ such that $\delta_X(x,x')<\eta $ implies $\delta_Y(f(x),f(x'))<\varepsilon$.
It is well known (and not difficult to check) that $f$ is continuous iff $c_X  f^{-1}(A) \subseteq f^{-1}c_Y(A)$ for all $A\subseteq Y$.
We thus arrive at the following general definition.

\begin{definition}\label{defCH}
Let $\langle X,\rho_X \rangle$ and $\langle Y,\rho_Y\rangle $ be derivative spaces.
We say that $f\colon X\to Y$ is {\em continuous} if for all $A\subseteq Y$, $ \dot \rho_X f^{-1}(A) \subseteq     f^{-1}\dot \rho_Y (A)$.
We say that $f$ is an {\em immersion}\footnote{Normally immersions are defined to be locally injective, continuous maps. Our definition is a bit more general, but the actual immersions we will construct later are, indeed, locally injective.} if it satisfies the stronger condition $ \rho_X f^{-1}(A) \subseteq   f^{-1}  \rho_Y (A)$.
Finally, $f$ is a {\em homeomorphism} if it is a bijection satisfying $ \rho_X f^{-1}(A) =  f^{-1}  \rho_Y (A)$.
\end{definition}

For the most part we will focus on continuous functions and immersions, but homeomorphisms are worth mentioning, since this is the appropriate notion of isomorphism for derivative spaces.
We are particularly interested in the case where $X=Y$, which leads to the notion of {\em dynamic derivative system.}

\begin{definition}
A {\em dynamic derivative system} is a triple $\mathfrak S =\lb X,\rho,f\rb$, where $\lb X,\rho\rb$ is a derivative space and $f\colon X\to X$ is a continuous map.
\end{definition}

If $\mathfrak S = \langle X,\rho,f\rangle$ is such that $\rho $ is the Cantor derivative associated with a metric $\delta$, we say that $\mathfrak S$ is a {\em dynamic metric system} and identify it with the triple $\langle X,\delta ,f \rangle $.
If $\rho={\downarrow_\sqsubset }$ for some transitive relation $\sqsubset$, we say that $\mathfrak S$ is a {\em dynamic Kripke frame} and identify it with the triple $\langle X,\sqsubset ,f\rangle$.

It will be convenient to characterise dynamic Kripke frames in terms of the relation $\sqsubset $.

\begin{definition}[monotonicity and strict monotonicity]
\sloppy Let $\langle W,\sqsubset \rangle$ be a derivative frame.
A function $f\colon W\to W$ is {\em  monotonic} if $w\sqsubseteq  v$ implies $f(w)\sqsubseteq f(v)$, and {\em strictly monotonic} if $w\sqsubset  v$ implies $f(w)\sqsubset  f(v)$.
\end{definition}

\begin{lemma}
If $\langle W,\sqsubset \rangle$ is a derivative frame and $f\colon W\to W$, then
\begin{enumerate}

\item if $f$ is monotonic, then it is continuous with respect to $\downarrow_\sqsubset $, and

\item if $f$ is strictly monotonic, then it is an immersion with respect to $\downarrow_\sqsubset $.

\end{enumerate}
\end{lemma}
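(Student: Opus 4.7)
The plan is to unpack the definitions of continuity and immersion from Definition~\ref{defCH} for the specific case $\rho = {\downarrow_\sqsubset}$, and verify the required set inclusions by chasing elements. Both parts reduce to the same elementary kind of argument: take a point $w$ in the left-hand side, extract a witness $v$ from the definition of $\downarrow_\sqsubset$, and push it through $f$ using (strict) monotonicity.

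For part (1), I would fix $A\subseteq W$ and take $w\in \dot{\downarrow_\sqsubset}f^{-1}(A)=f^{-1}(A)\cup {\downarrow_\sqsubset}f^{-1}(A)$, splitting into two cases. If $w\in f^{-1}(A)$, then $f(w)\in A\subseteq\dot{\downarrow_\sqsubset}(A)$, so $w\in f^{-1}\dot{\downarrow_\sqsubset}(A)$, as required. Otherwise there is some $v\sqsupset w$ with $f(v)\in A$; monotonicity gives $f(w)\sqsubseteq f(v)$, and breaking this into $f(w)=f(v)$ or $f(w)\sqsubset f(v)$ places $f(w)$ in $A$ or in $\downarrow_\sqsubset A$ respectively, hence in $\dot{\downarrow_\sqsubset}(A)$. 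Either case yields $w\in f^{-1}\dot{\downarrow_\sqsubset}(A)$, establishing continuity.

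For part (2), the argument is cleaner because we avoid the reflexive disjunct. Fix $A\subseteq W$ and take $w\in \downarrow_\sqsubset f^{-1}(A)$, so that some $v\sqsupset w$ satisfies $f(v)\in A$. Strict monotonicity gives $f(w)\sqsubset f(v)$, which directly places $f(w)\in \downarrow_\sqsubset A$, i.e.\ $w\in f^{-1}\downarrow_\sqsubset(A)$. This gives the immersion inclusion.

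There is essentially no obstacle here: the content is entirely a definition unfolding, and the only subtle point is the bookkeeping between $\sqsubset$ and $\sqsubseteq$ in part (1), where monotonicity yields only a weak inequality on images and one must absorb the equality case into the reflexive part of $\dot{\downarrow_\sqsubset}$. It is precisely this mismatch that explains why strict monotonicity is needed for the stronger conclusion in part (2), and highlights the analogy with the metric case where continuity only gives closure-preservation while immersions preserve the derivative itself.
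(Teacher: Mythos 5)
Your proof is correct and is exactly the routine definition-unfolding the paper leaves implicit (the lemma is stated without proof as "readily verified"). The case split in part (1) absorbing the equality case $f(w)=f(v)$ into the reflexive part of $\dot{\downarrow_\sqsubset}$ is precisely the right bookkeeping, and your closing observation about why strict monotonicity is needed for part (2) is on point.
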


Next we will discuss the \emph{tangle operators}, which are important in spatial modal logic, as they are expressively equivalent to the $\mu$-calculus over the class of transitive Kripke frames, as shown by Dawar and Otto~\cite{DawarO09}.
In the topological context, the {\em tangled closure} was introduced by Fernández-Duque~\cite{FernandezTangled} and the {\em tangled derivative} was introduced by Goldblatt and Hodkinson \cite{Goldblatt2017Spatial}, who observed that Dawar and Otto's result holds for metric spaces as well.

\begin{definition}[tangled derivative]
Let $\lb X,\rho\rb$ be a derivative space and let $\mathcal{S}\subseteq \wp(X)$. Given $A\subseteq X$, we say that $\mathcal{S}$ is \emph{tangled in} $A$ if for all $S\in \mathcal{S}$,
$ A\subseteq \rho(S\cap A) $.
We define the \emph{tangled derivative} of $\mathcal{S}$ as 
$$ \rho^\infty (\mathcal{S}):=\bigcup \{ A\subseteq X : \mathcal{S}\text{ is tangled in }A\}.$$ 
\end{definition}

The {\em tangled closure} is then the special case of the tangled derivative where $\rho$ is a closure operator, and we denote it by $\dot\rho^\infty$ (or $c^\infty$ when working with a metric space).

\begin{example}
Let $\langle W,\sqsubset\rangle$ be a derivative frame, and $S_1,\ldots,S_n\subseteq W$.
Then, $w\in {\downarrow}_\sqsubset^\infty ( \{S_1,\ldots,S_n \})$ if and only if there is an infinite sequence
\[w= w_0\sqsubset w_1\sqsubset \ldots \]
such that for every $k=1,\ldots,n$, $w_i\in S_k$ for infinitely many values of $i$~\cite{Goldblatt2017Spatial}.

The case where $W$ is finite is particularly transparent.
In this case, the sequence $w_0\sqsubset w_1\sqsubset \ldots $ will eventually stabilise in a single  cluster; that is, for some $j$ we will have that $w_{i+1}\sqsubset w_i$ whenever $i>m$.
By transitivity, all such $w_i$ must be reflexive, so we arrive at the following characterisation:
$w\in \rho^\infty ( \{S_1,\ldots,S_n \})$ if and only if there is a reflexive cluster $C(v)$ (i.e., a cluster for which all of its points are reflexive) with $w\sqsubset v$ such that for all $k=1,\ldots ,n $, $S_k\cap C(v) \neq \varnothing$.
\end{example}

\begin{example}
Let $A=\mathbb Q$ be the set of rational numbers, and $B=\mathbb R\setminus\mathbb Q$ be its complement.
Then, $d^\infty(\{A,B\}) = \mathbb R$, which is readily checked since both $A,B$ are dense, hence $\{A,B\}$ is tangled in $\mathbb R$.

If instead we define $A=(-\infty,0]$ and $B=[0,\infty)$, we get that $d^\infty(\{A,B\}) = \varnothing $.
This is because if $D\subseteq \mathbb R$, we cannot have that $\{A,B\}$ is tangled in $D$: if $D$ contains a negative number, then $D\not\subseteq d(B\cap D)$; if it contains a positive number, then $D\not \subseteq d(A\cap D)$.
So we are left with the case where $D=\{0\}$.
But then $d(D)=\varnothing$, so also $d(A\cap D)=\varnothing$.
In contrast, observe that in this case, we have $c^\infty(\{A,B\}) = \{0\} $, since $c(\{0\}) = \{0\}$.
\end{example}

Our goal is to reason about various classes of dynamic derivative systems	using the logical framework defined in the next section.

\section{Dynamic topological logics}\label{secDTL}

In this section we discuss dynamic topological logic in the general setting of dynamic derivative systems.
\sloppy Given a non-empty set $\mathsf{PV}$ of propositional variables, the language $\mathcal{L}_{\dd\dtan}^{\circ}$ is defined recursively as follows:
$$
\varphi::= p \; | \; \varphi\wedge \varphi \; | \; \neg\varphi \; | \; {\dd}\varphi \; | \;   \dtan \Phi  \; | \; \bc\varphi,$$
where $p \in \mathsf{PV}$ and $\Phi$ is a finite sequence of formulas in $\mathcal{L}_{\dd\dtan}^{\circ}$.
It consists of the Boolean connectives $\wedge$ and $\neg$, the temporal modality $\bc$, the modality $\dd$ for the derivative operator, and the tangled derivative modality $\dtan$. 
As usual, $\square := \neg\dd\neg$ is the dual of $\dd$.
The closure and interior modalities may be defined by $\diamonddot\varphi:=\varphi\vee\dd\varphi$ and $\boxdot\varphi:=\varphi\wedge\square\varphi$.
Following~\cite{Goldblatt2017Spatial}, we define $\ctan\Phi:=\diamonddot\bigwedge\Phi\vee \dtan\Phi$. 
\begin{definition}[semantics]\label{d-semantics}
A \emph{dynamic derivative model} (DDM) is a quadruple $\mathfrak {M}=\langle X,\rho,f,\nu\rb$ where $\langle X,\rho,f\rb$ is a dynamic derivative system and $\nu:\mathsf{PV}\rightarrow\wp(X)$ is a valuation function assigning a subset of $X$ to each propositional letter in $\mathsf{PV}$.  
Given $\varphi,\psi \in\mathcal{L}_{\dd\dtan}^{\circ }$, we define the truth set $\|\varphi\| \subseteq X$ of $\varphi$ inductively as follows:
\begin{itemize}

	\item $\| p \| = \nu(p)$;
	\item $\| \neg \varphi \| = X \setminus \|  \varphi \| $;
	\item $\| \varphi\wedge\psi \| = \|\varphi\| \cap \|\psi\| $;
	\item $\|\dd\varphi\| = \rho ( \|\varphi\| ) $;
			\item $\|\dtan \{\varphi_1,\ldots,\varphi_n\}\| = \rho^\infty(\{\|\varphi_1\|,\ldots,\|\varphi_n\|\}) $;
		\item $\| \bc\varphi \| = f^{-1}(\| \varphi\|)$.

\end{itemize}
We write $\mathfrak  M,x\models\varphi$ if $x\in \|\varphi\|$, and $\mathfrak  M\models\varphi$ if $\|\varphi\| = X$.
We may write $\|\cdot\|_\mathfrak M$ or $\|\cdot\|_\nu$ instead of $\|\cdot\|$ when working with more than one model or valuation.
\end{definition}

The notion of {\em validity} is defined as usual; if $\mathfrak X$ is a dynamic derivative system and $\varphi$ is a formula, we write $\mathfrak X\models\varphi$ if $\langle\mathfrak X,\nu\rangle \models\varphi$ for every valuation $\nu$ on $\mathfrak X$.
Similarly, if $\Omega$ is a class of dynamical systems or models, we write $\Omega\models\varphi$ and say $\varphi$ is {\em valid on $\Omega$} if $\mathfrak A\models\varphi$ for every $\mathfrak A\in\Omega$.

We define other connectives (e.g.\ $\vee,\rightarrow$) as abbreviations in the usual way.
The fragment of $\mathcal{L}_{\dd}^{\circ }$ that includes only $\dd$ will be denoted by $\mathcal L_{\dd}$.
In order to align with the familiar axioms of modal logic, it is convenient to discuss the semantics of $\dn$.
Accordingly, we define the dual of the derivative, called the \emph{co-derivative}.

\begin{definition}[co-derivative]
Let $\langle X,\rho\rangle$ be a derivative space.
For each $S\subseteq X$ we define $\hat \rho(S):=X\backslash  \rho(X\backslash S)$ to be the {\em co-derivative} of $S$.
\end{definition}
The co-derivative satisfies the following properties, where $A,B\subseteq X$:
\begin{enumerate}
\item $\hat \rho (X)=X$;
\item $A\cap \hat \rho(A)\subseteq \hat \rho \hat \rho(A)$;
\item $\hat \rho(A\cap B)=\hat \rho(A)\cap \hat \rho(B)$.
\end{enumerate}

It can readily be checked that for every dynamic derivative model $\langle X,\rho,f,\nu\rangle$ and every formula $\varphi$, $\|\dn\varphi\| = \hat \rho (\|\varphi\|)$.
The co-derivative can be used to define the standard {\em interior} of a set, given by $i(A)=A\cap\hat \rho(A)$ for each $A\subseteq X$. This implies that $U\subseteq \hat \rho(U)$ for each open set $U$, but not necessarily $\hat \rho(U)\subseteq U$.
Next, we discuss the systems of axioms that are of interest to us.

Let us list the axiom schemes and rules that we will consider in this paper.
Below, if $\Phi = \{\varphi_1,\ldots,\varphi_n\}$ is a set of formulas then $\bc\Phi:=\{\bc\varphi_1,\ldots,\bc\varphi_n\}$, and $\diamondplus\in \{\dd,\diamonddot\}$.
\begin{description}
\item{\rm Taut} $:= \text{All propositional tautologies}$
\item{\rm K} $:= \dn(\varphi\to\psi)\to(\dn\varphi\to \dn\psi)$

\item{\rm 4} $  :=    \dn\varphi \to\dn\dn\varphi $

\item{\rm D} $  :=    \dd\top $
\item{${\rm Next}_\neg$} $:=\neg\bc\varphi\leftrightarrow\bc\neg\varphi$
\item{${\rm Next}_\wedge$} $:=\bc (\varphi\wedge\psi)\leftrightarrow \bc \varphi \wedge\bc \psi $
\item{${\rm C}_\diamondplus$} $:= \diamondplus \bc  \varphi\to  \bc\diamondplus\varphi$

\item{\rm MP} $:= \dfrac{\varphi \ \ \varphi\to \psi}\psi$
\item{${\rm Nec}_\dn$} $:= \dfrac{\varphi }{\dn \varphi}$
\item{${\rm Nec}_\bc$} $:= \dfrac{\varphi }{\bc \varphi}$

\item{${\rm Fix}_{\dd^\infty}$} $:= \dd^\infty \Phi \to \bigwedge_{\varphi\in \Phi} \dd (\varphi \wedge \dd^\infty \Phi)$

\item{${\rm Ind}_{\dd^\infty}$} $:= \boxdot \big (\theta \to  \bigwedge_{\varphi\in \Phi} \dd (\varphi \wedge \theta ) \big ) \to ( \theta\to  \dd^\infty \Phi ) $

\item {${\rm CTan}_{\diamondplus}$}$:= \diamondplus^\infty \bc\Phi \to \bc \diamondplus^\infty \Phi $

\end{description}
The `base modal logic' over $\mathcal L_\dd$ is given by
$$\mathbf{K}:= {\rm Taut}+{\rm K} +{\rm MP}+{\rm Nec}_\dn.$$
However, we are mostly interested in proper extensions of $\mathbf K$.
Let $\Lambda,\Lambda'$ be logics over languages $\mathcal L$ and ${\mathcal L}'$. 
We say that $\Lambda$ extends $\Lambda'$ if $\mathcal L'\subseteq\mathcal L$ and all the axioms and rules of $\Lambda'$ are derivable in $\Lambda$.
A logic over $\mathcal L_\dd$ is {\em normal} if it extends $\mathbf{K}$.
If $\Lambda$ is a logic and $\varphi$ is a formula, we denote by $\Lambda+\varphi$ the least extension of $\Lambda$ which contains every substitution instance of $\varphi$ as an axiom.

We write $\vdash_\Lambda\varphi$ when $\varphi$ is a theorem of $\Lambda$, or simply $\vdash\varphi$ when $\Lambda$ is clear from context.
Recall that a logic $\Lambda$ is {\em sound} for $\Omega$ if every theorem of $\Lambda$ is valid on $\Omega$, and {\em complete} if whenever $\Omega\models\varphi$, it follows that $\vdash_\Lambda\varphi$.

We then define
$\mathbf{K4}:=  \mathbf{K}+{\rm 4}$, $\mathbf{K4D} = \mathbf{K4} +\rm D $, and $\mathbf{S4}:=  \mathbf{K4}+{\rm T}$.
These logics are well known and characterise certain classes of spaces and Kripke frames which we review below.
For a logic $\Lambda$, $\tdL\Lambda$ denotes the logic $\Lambda + {\rm Fix}_{\dtan} + {\rm Ind}_{\dtan} $ over $\mathcal L _{\dd\dtan}$.

\begin{lemma}\label{lemmTanProp}
Let $\Gamma,\Delta$ be sets of formulas and let $\varphi$ be a formula.
\begin{enumerate}

\item If $\Gamma\subseteq\Delta$, then $\mathbf{K4}^\infty \vdash\dtan\Delta\to\dtan\Gamma$.

\item $\mathbf{K4}^\infty \vdash \ctan(\Gamma\cup\{\varphi,\neg\varphi\}) \to \dtan\Gamma$.

\end{enumerate}
\end{lemma}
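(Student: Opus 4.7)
Both claims follow from a common recipe: prove a pointwise implication, necessitate it to get a $\boxdot$-premise, and then apply ${\rm Ind}_{\dtan}$.

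For part (1), set $\theta:=\dtan\Delta$. The fixed-point axiom ${\rm Fix}_{\dtan}$ instantiated at $\Delta$ gives $\dtan\Delta\to\bigwedge_{\varphi\in\Delta}\dd(\varphi\wedge\dtan\Delta)$; since $\Gamma\subseteq\Delta$, weakening the conjunction yields $\dtan\Delta\to\bigwedge_{\varphi\in\Gamma}\dd(\varphi\wedge\dtan\Delta)$. Necessitating with ${\rm Nec}_\dn$ (and using that a theorem $\chi$ also yields $\boxdot\chi$ since $\boxdot\chi=\chi\wedge\dn\chi$) produces the premise of ${\rm Ind}_{\dtan}$ with $\theta=\dtan\Delta$ and $\Phi=\Gamma$, which delivers $\dtan\Delta\to\dtan\Gamma$.

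For part (2), write $\Delta:=\Gamma\cup\{\varphi,\neg\varphi\}$ and put $\theta:=\ctan\Delta$. The plan is to verify the induction premise $\theta\to\dd(\psi\wedge\theta)$ for every $\psi\in\Gamma$ and then close with ${\rm Ind}_{\dtan}$. The preparatory step is to lift the fixed-point axiom from $\dtan$ to $\ctan$: using the definition $\ctan\Phi:=\diamonddot\bigwedge\Phi\vee\dtan\Phi$ together with ${\rm Fix}_{\dtan}$, a short case split on the two disjuncts yields
\[
\ctan\Delta\to\diamonddot(\chi\wedge\ctan\Delta)\quad\text{for each }\chi\in\Delta.
\]
Now fix $\psi\in\Gamma$. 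From $\theta\to\diamonddot(\psi\wedge\theta)$, the instance with $\neg\psi$ gives $\theta\wedge\neg\psi\to\dd(\psi\wedge\theta)$, since $\diamonddot\alpha=\alpha\vee\dd\alpha$ and $\psi\wedge\theta$ already fails at a $\neg\psi$-point. So it remains to handle $\theta\wedge\psi$, and here we exploit the excluded middle built into $\Delta$: at any point, one of $\varphi,\neg\varphi$ is false, and the corresponding instance of the $\ctan$-fixed-point gives $\dd(\neg\varphi\wedge\theta)$ or $\dd(\varphi\wedge\theta)$. At points verifying $\theta$ we know $\diamonddot(\psi\wedge\theta)$ holds, so monotonicity of $\dd$ and ${\rm K}$ yield $\dd((\psi\wedge\theta)\vee\dd(\psi\wedge\theta))\equiv\dd(\psi\wedge\theta)\vee\dd\dd(\psi\wedge\theta)$; the 4 axiom (in its diamond form $\dd\dd\alpha\to\dd\alpha$) then collapses the second disjunct, giving $\dd(\psi\wedge\theta)$ in either case.

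Combining, $\theta\to\dd(\psi\wedge\theta)$ for every $\psi\in\Gamma$. Conjoining, necessitating, and applying ${\rm Ind}_{\dtan}$ with $\Phi=\Gamma$ yields $\theta\to\dtan\Gamma$, which is the claim. The main conceptual step is recognising that $\{\varphi,\neg\varphi\}$ injects \emph{non-trivial} $\dd$-neighbours into every $\theta$-point (so $\diamonddot$ can be upgraded to $\dd$), while transitivity via axiom 4 is what keeps the nested $\dd$ from growing; I expect isolating and writing out this upgrade neatly to be the only mildly delicate part of the proof.
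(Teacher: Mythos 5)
Your part (1) is exactly the paper's argument: instantiate ${\rm Fix}_{\dtan}$ at $\Delta$, weaken the conjunction to $\Gamma$, necessitate, and close with ${\rm Ind}_{\dtan}$ taking $\theta=\dtan\Delta$. For part (2), however, you take a genuinely different and considerably longer route. The paper's proof is a two-line observation: by definition $\ctan(\Gamma\cup\{\varphi,\neg\varphi\})=\diamonddot\bigwedge(\Gamma\cup\{\varphi,\neg\varphi\})\vee\dtan(\Gamma\cup\{\varphi,\neg\varphi\})$, and since the first disjunct's argument contains $\varphi\wedge\neg\varphi$ it is equivalent to $\bot$ (using $\vdash\neg\dd\bot$ from ${\rm Nec}_\dn$), so the whole formula collapses to $\dtan(\Gamma\cup\{\varphi,\neg\varphi\})$ and item (1) finishes. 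You instead rerun the induction rule from scratch with $\theta=\ctan(\Gamma\cup\{\varphi,\neg\varphi\})$, establishing a $\ctan$-fixed-point principle and then upgrading $\diamonddot(\psi\wedge\theta)$ to $\dd(\psi\wedge\theta)$ via the excluded-middle pair, distribution of $\dd$ over $\vee$, and the 4 axiom in diamond form. Your derivation is correct — the case split on $\varphi\vee\neg\varphi$, the passage from $\dd(\chi\wedge\theta)$ and $\vdash\chi\wedge\theta\to\diamonddot(\psi\wedge\theta)$ to $\dd(\psi\wedge\theta)\vee\dd\dd(\psi\wedge\theta)$, and the collapse by $\dd\dd\alpha\to\dd\alpha$ are all sound in $\mathbf{K4}^\infty$ — but note that your general lift of ${\rm Fix}$ to $\ctan$ is vacuous here, since $\diamonddot\bigwedge\Delta$ is already inconsistent for this $\Delta$; recognising that inconsistency up front is precisely what makes the paper's proof short. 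What your version buys is an explicit demonstration of the mechanism by which a contradictory pair forces proper $\dd$-successors, which is the same phenomenon exploited later in the witnessing lemma; what it costs is that the purely syntactic shortcut available from the definition of $\ctan$ goes unused.
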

\begin{proof}
The first item follows from using ${\rm Fix}_{\dd^\infty}$ to observe that $\dtan\Delta$ satisfies the premise of ${\rm Ind}_{\dd^\infty}$ applied to $\Gamma$.
The second follows from the definition $\ctan\Gamma:=\diamonddot \left( \bigwedge \Gamma\wedge \varphi \wedge \neg\varphi \right)  \vee \dtan(\Gamma \cup \{\varphi,\neg\varphi\})$; since  $\varphi \wedge \neg\varphi$ is inconsistent, this is equivalent to $\dtan(\Gamma \cup \{\varphi,\neg\varphi\})$, which by the first item implies $\dtan\Gamma$.
\end{proof}

In addition, for a logic $\Lambda$ over $\mathcal L_\dd$, $\Lambda\mathbf{F}$ is the logic over $\mathcal L^\circ_\dd$ given by
$$\Lambda\mathbf{F} := \Lambda+{\rm Next}_\neg+{\rm Next}_\wedge+{\rm Nec}_\bc.\footnote{Logics of the form $\Lambda\mathbf{ F}$ correspond to dynamical systems with a possibly discontinuous function. We will not discuss discontinuous systems in this paper; see \cite{artemov} for more information.}$$
This simply adds axioms of linear temporal logic to $\Lambda$, which hold whenever $\bc$ is interpreted using a function.

For continuous functions, we define
\[
\begin{array}{rclcrcl}
\Lambda{\bf C}& := &\Lambda\mathbf{F}+ {\rm C}_{\diamonddot} & \ \
  & \Lambda{\bf C}^\infty& :=& \Lambda^\infty\mathbf{C}+  {\rm CTan}_\diamonddot \\
 \Lambda{\bf I}&:=&\Lambda\mathbf{F}+  {\rm C}_{\Diamond}  
 &   & \Lambda{\bf I}^\infty&:=&\Lambda^\infty \mathbf{I} +   {\rm CTan}_\Diamond .
\end{array}
\]
As we will see, these correspond to derivative spaces with a continuous function or immersion, respectively; accordingly, we say that logics that include ${\rm C}_\Diamond$ are {\em immersive.}
The following is well known and dates back to McKinsey and Tarski \cite{Tarski}.

\begin{theorem}\label{theoK4metric}
$\mathbf{S4}$ is the logic of all closure spaces, the logic of all transitive, reflexive derivative frames, and the logic of the real line with the standard closure.
\end{theorem}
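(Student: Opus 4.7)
The theorem bundles soundness with three separate completeness claims, and my plan is to treat soundness uniformly first and then address the three completeness statements in turn. For soundness, every axiom of $\mathbf{S4}$ is valid on any closure space $\langle X,c\rangle$: $A\subseteq c(A)$ (equivalently $\hat c(A)\subseteq A$) validates ${\rm T}$, $cc(A)\subseteq c(A)$ validates ${\rm 4}$, and distributivity of $c$ over binary unions validates ${\rm K}$ and ${\rm Nec}_\dn$. Since every reflexive transitive derivative frame $\langle W,\sqsubseteq\rangle$ induces a closure space via $\downarrow_\sqsubseteq$, and $\langle\mathbb{R},c\rangle$ is itself a closure space, soundness on reflexive transitive frames and on $\mathbb{R}$ follows at once.

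For completeness over reflexive transitive derivative frames I would use the canonical model of $\mathbf{S4}$: its worlds are the maximal $\mathbf{S4}$-consistent sets of formulas, with $\Gamma\sqsubseteq\Delta$ iff $\{\varphi:\dn\varphi\in\Gamma\}\subseteq\Delta$. Axioms ${\rm T}$ and ${\rm 4}$ ensure that $\sqsubseteq$ is reflexive and transitive, respectively, and a standard truth lemma shows that any $\mathbf{S4}$-consistent formula is satisfied somewhere on the canonical frame. This yields Kripke completeness, and via the induced closure $\downarrow_\sqsubseteq$ it immediately transfers to completeness over the class of all closure spaces.

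Completeness with respect to $\langle\mathbb{R},c\rangle$ is the McKinsey--Tarski part and requires the most work. First, obtain the finite model property of $\mathbf{S4}$ by filtration, so that any non-theorem $\varphi$ is falsified on a finite rooted reflexive transitive frame $\mathfrak F$. Then show that every such $\mathfrak F$ is an interior image of $\mathbb{R}$, i.e.\ there is a continuous open surjection $f\colon\mathbb{R}\to\mathfrak F$; a counter-valuation on $\mathfrak F$ then pulls back along $f$ to a counter-valuation on $\mathbb{R}$. The map $f$ is built by recursion on the depth of $\mathfrak F$: within a working open interval, assign a closed nowhere-dense Cantor-like piece to the root cluster (cycling through its worlds to realise each) and fill each complementary open subinterval with a recursively constructed map into an upward-closed subframe generated by an immediate successor cluster.

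The principal obstacle is this interior-map construction. Continuity is essentially automatic, but openness forces every fibre $f^{-1}(w)$ to be open in $f^{-1}(\{v:w\sqsubseteq v\})$, and arranging this simultaneously for all $w$ demands delicate control of the fractal geometry: the Cantor-like piece assigned to a cluster must be dense in its surrounding interval, so that successor clusters accumulate on it from both sides, yet sparse enough to leave room for the recursive sub-dissections. Once this classical construction is in place, the remaining ingredients are routine canonical-model and filtration arguments.
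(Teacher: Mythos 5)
The paper offers no proof of this theorem to compare against: it is stated as a classical result and attributed to McKinsey and Tarski with a citation, so the only question is whether your outline is a viable reconstruction of the standard argument. It is. Soundness via the Kuratowski closure axioms, Kripke completeness via the canonical model (with $\mathrm{T}$ and $4$ giving reflexivity and transitivity), the transfer to all closure spaces by observing that reflexive transitive frames under $\downarrow_\sqsubseteq$ form a subclass of closure spaces, and finally FMP by filtration plus the construction of an interior map from $\mathbb{R}$ onto each finite rooted $\mathbf{S4}$-frame -- this is exactly the modern route (McKinsey--Tarski as reorganised by Aiello, van Benthem and G.~Bezhanishvili), and you correctly identify the interior-map construction as the only genuinely hard step.

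Two small points of hygiene in that hard step. First, your phrase ``the Cantor-like piece assigned to a cluster must be dense in its surrounding interval'' contradicts your own requirement that it be closed and nowhere dense; what you actually need is the (automatic) property of a Cantor-like set that every one of its points is an accumulation point of the complementary gaps, so that the successor fibres placed in those gaps accumulate onto the root fibre. Second, openness of $f$ is more cleanly arranged via the back condition -- for every $x$ and every $w\sqsupseteq f(x)$, each neighbourhood of $x$ meets $f^{-1}(w)$ -- rather than your fibre-relative-openness formulation; and it is customary to first unravel the finite rooted frame into a quasi-tree of clusters (a p-morphic preimage), since the recursive dissection of intervals is cleanest for trees. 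Neither point is a gap in the mathematics, only in the bookkeeping.
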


It is well known that $\mathbf{K4}$ is the logic of transitive derivative frames (see e.g.~\cite{black}), and Bezhanishvili and Lucero-Bryan~\cite{BL-B} showed it to be the logic of all countable metric spaces.

\begin{theorem}
$\mathbf{K4}$ is the logic of all (finite) derivative frames (i.e., transitive Kripke frames) and of all (countable) metric spaces.
\end{theorem}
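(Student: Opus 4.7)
The plan is to establish soundness and completeness for the four classes simultaneously, leveraging standard Kripke-style techniques and a concrete construction for the metric case.

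For soundness, I would verify that axioms $\rm K$ and $\rm 4$ are valid on every transitive Kripke frame and every metric space. On a derivative frame $\langle W,\sqsubset\rangle$, the operator $\downarrow_\sqsubset$ is additive (validating $\rm K$) and satisfies $\downarrow_\sqsubset\downarrow_\sqsubset A\subseteq\downarrow_\sqsubset A$ by transitivity of $\sqsubset$ (validating $\rm 4$). On a metric space, the Cantor derivative $d$ satisfies $d(A\cup B)=d(A)\cup d(B)$ and $dd(A)\subseteq d(A)$ directly, as noted after the definition of derivative space.

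For completeness over transitive Kripke frames, I would carry out the standard canonical model argument: take worlds to be maximal $\mathbf{K4}$-consistent sets and set $\Gamma\sqsubset\Delta$ iff $\psi\in\Delta$ and $\dn\psi\in\Delta$ whenever $\dn\psi\in\Gamma$. The axiom $\rm 4$ ensures transitivity, and the usual existence lemma gives the truth lemma $\|\varphi\|=\{\Gamma:\varphi\in\Gamma\}$. To get the finite model property, I would apply filtration: given a non-theorem $\varphi$, quotient the canonical model by the equivalence relation induced by the subformulas of $\varphi$, defining the filtered relation so that transitivity is preserved (e.g.\ the transitive closure of the minimal filtration). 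This yields a finite transitive refutation model.

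For completeness over (countable) metric spaces, I would reduce to the finite Kripke case via a density/d-morphism construction, following Bezhanishvili and Lucero-Bryan. Given a finite refutation model on $\langle W,\sqsubset\rangle$, I would construct a countable metric space $X\subseteq\mathbb Q$ and a surjection $\pi\colon X\to W$ commuting with the derivative, i.e.\ $d_X\pi^{-1}(A)=\pi^{-1}\downarrow_\sqsubset(A)$ for all $A\subseteq W$. The natural representation handles clusters recursively: an irreflexive singleton cluster is realised by a point isolated within the fibre of its $\sqsubset$-predecessors, while a reflexive cluster $C$ is realised by a countable set of points that is dense in itself inside the appropriate relative neighbourhood, with points of each $\equiv$-class in $C$ interleaved densely so that every $w\in C$ sees every $v\in C$ as a limit. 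Nesting this construction from the $\sqsubset$-minimal clusters upward produces the desired space inside $\mathbb Q$, and transferring the valuation along $\pi$ yields a countable metric countermodel. The main obstacle is precisely this last step: ensuring that the recursive insertion of dense sequences inside the nested neighbourhood structure preserves the commutation $d_X\pi^{-1}=\pi^{-1}\downarrow_\sqsubset$ both locally within each cluster and globally across the $\sqsubset$-layers, without accidentally creating extra limit points that would break the d-morphism property.
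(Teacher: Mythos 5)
The paper does not actually prove this theorem: it is stated as a known result, with the Kripke half attributed to the standard literature and the metric half to Bezhanishvili and Lucero-Bryan, so there is no in-paper proof to compare against step by step. Your soundness check and your canonical-model-plus-filtration argument for completeness over (finite) transitive frames are standard and correct; the transitive closure of the minimal filtration is the right choice and nothing there needs repair.

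The metric half is where all of the content lies, and your proposal stops exactly at the hard point. You describe a recursive insertion of dense sequences into nested neighbourhoods and then explicitly defer the verification that this creates no unwanted limit points and that $d_X\pi^{-1}(A)=\pi^{-1}{\downarrow_\sqsubset}(A)$ holds across clusters and layers. That verification \emph{is} the theorem; naming the obstacle does not overcome it. A cleaner route --- and the one this paper itself uses later for its own completeness results in the section on metric $d$-completeness --- is to first duplicate reflexive points (the $\mathfrak F_\oplus$ construction) so that every reflexive cluster has at least two elements, then take $X$ to be the set of $\sqsubseteq$-increasing, eventually constant paths through the finite frame, metrised by $\delta(\vec w,\vec v)=2^{-n}$ where $n$ is the first index of disagreement, and let $\pi$ be a limit assignment sending each path to an element occurring infinitely often in it. The forth and back conditions for a $p$-morphism from a metric space to a frame are then verified by explicit manipulation of finite prefixes --- which is precisely the ``no accidental limit points'' check you postponed --- and the resulting space is countable and embeds into $\mathbb Q$ by Sierpi\'nski's theorem. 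I would suggest either carrying out your nested-neighbourhood construction in full for the smallest nontrivial case (a reflexive cluster above an irreflexive root), where the difficulty already appears, or switching to the path-space construction, where the bookkeeping becomes mechanical.
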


Logics with the $\rm C$ axioms correspond to classes of dynamical systems.

\begin{lemma}\label{lemCH}
If $\Lambda$ is sound for a class of derivative spaces $\Omega$, then:
\begin{enumerate}

\item $\Lambda{\bf C}$ is sound for the class of dynamic derivative systems $\langle X,\rho,f\rangle$, where $\langle X,\rho\rangle\in \Omega$ and $f$ is continuous.

\item $\Lambda{\bf I}$ is sound for the class of dynamic derivative systems $\langle X,\rho,f\rangle$, where $\langle X,\rho\rangle\in \Omega$ and $f$ is an immersion.

\end{enumerate}
\end{lemma}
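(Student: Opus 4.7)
The plan is to verify soundness by checking each axiom scheme and inference rule of $\Lambda\mathbf{C}$ (respectively $\Lambda\mathbf{I}$) that is not already among those of $\Lambda$. Fix a dynamic derivative system $\mathfrak S = \langle X,\rho,f\rangle$ with $\langle X,\rho\rangle \in \Omega$ (and $f$ continuous, or an immersion), and any valuation $\nu$ on $\mathfrak S$. The axioms and rules inherited from $\Lambda$ involve only $\dd$ and propositional connectives, and their truth sets are computed exactly as in the derivative space $\langle X,\rho\rangle$; hence soundness of $\Lambda$ over $\Omega$ immediately gives that these remain valid in $\langle \mathfrak S,\nu\rangle$. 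It therefore suffices to check the temporal axioms ${\rm Next}_\neg$, ${\rm Next}_\wedge$, the rule ${\rm Nec}_\bc$, and the continuity axiom ${\rm C}_\diamonddot$ (and additionally ${\rm C}_\Diamond$ for the immersive case).

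For the temporal schemes, I would simply unfold the semantics: since $\|\bc\varphi\| = f^{-1}(\|\varphi\|)$ and $f^{-1}$ is a Boolean homomorphism on $\wp(X)$, we have $f^{-1}(X\setminus A) = X\setminus f^{-1}(A)$ and $f^{-1}(A\cap B) = f^{-1}(A)\cap f^{-1}(B)$, which give ${\rm Next}_\neg$ and ${\rm Next}_\wedge$ respectively. The rule ${\rm Nec}_\bc$ is equally immediate: if $\|\varphi\| = X$, then $\|\bc\varphi\| = f^{-1}(X) = X$.

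For ${\rm C}_\diamonddot$, set $A = \|\varphi\|$. Then $\|\diamonddot\bc\varphi\| = \dot\rho(f^{-1}(A))$ and $\|\bc\diamonddot\varphi\| = f^{-1}(\dot\rho(A))$, so the axiom's validity is exactly the continuity condition $\dot\rho f^{-1}(A) \subseteq f^{-1}\dot\rho(A)$ from Definition \ref{defCH}. The immersive case ${\rm C}_\Diamond$ is analogous: it translates to $\rho f^{-1}(A) \subseteq f^{-1}\rho(A)$, which is the defining inclusion for an immersion. This also shows the inclusion of the continuity inequality in the immersive one, consistent with every immersion being continuous.

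I do not expect a genuine obstacle here; the lemma is essentially the syntactic counterpart of Definition \ref{defCH}, and the argument is a direct translation between the semantic inclusion on the space side and the validity of the corresponding modal axiom. The only thing worth flagging is to be careful that $f$ being merely a function (rather than, say, a total relation) already suffices for the Boolean-algebraic behaviour of $f^{-1}$ needed for the $\mathbf{F}$-axioms, so no further assumption on $f$ is invoked beyond what is built into the definition of a dynamic derivative system.
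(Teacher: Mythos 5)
Your proposal is correct and matches the paper's intent: the paper gives no detailed proof, stating only that the lemma ``is easy to verify from the definition of a continuous function in the context of derivative spaces (Definition~\ref{defCH})'', and your verification — $f^{-1}$ acting as a Boolean homomorphism for the $\mathbf F$-axioms and ${\rm Nec}_\bc$, and ${\rm C}_\diamonddot$, ${\rm C}_\Diamond$ translating directly into the continuity and immersion inclusions — is exactly that verification spelled out.
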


The above lemma is easy to verify from the definition of a continuous function in the context of derivative spaces (Definition \ref{defCH}).
Note that $\dtan \bc\Phi\to \bc\dtan\Phi$ is {\em not} valid over the class of dynamic derivative spaces with a continuous function (see Example~\ref{exCore}).

 \ignore{
 \begin{definition}(\emph{c-semantics})\label{semantics}
 Given a DTM  $\mathfrak {M}=\langle X,\tau,f,\nu\rb$ and a point $x\in X$, the satisfaction relation $\models$ is defined inductively as follows:\david{I think we can omit this definition altogether.} \yoav{All the way until prior work?}
 \david{I just don't think we need to define $c$-semantics and $d$-semantics. Instead, define $d$-semantics directly, and define $*^+ \varphi = \varphi \wedge * \varphi $ as an abbreviation. Then, in a short remark you mention that $*^+$ corresponds to the usual closure semantics used in other references..}
 \begin{enumerate}
	\item $\mathfrak {M},x\models p\iff x\in\nu(p)$;
	\item $\mathfrak {M},x\models\neg\varphi\iff \mathfrak {M},x\not\models\varphi $;
	\item $\mathfrak {M},x\models \varphi\wedge\psi\iff \mathfrak {M},x\models \varphi$ and $\mathfrak {M},x\models \psi$;
	\item $\mathfrak {M},x\models*\varphi\iff \exists U\in\tau$ s.t. $x\in U$ and $\forall y\in U(\mathfrak {M},y\models\varphi)$,

	and therefore dually:

		 $\mathfrak {M},x\models\dd\varphi\iff\forall U\in\tau$, if $x\in U$ then $\exists y\in U(\mathfrak {M},y\models\varphi)$;
		\item $\mathfrak {M},x\models \bc\varphi\iff \mathfrak {M},f(x)\models \varphi$;
		\item $\mathfrak {M},x\models[*]\varphi\iff \mathfrak {M},f^n(x)\models\varphi$, for all $n\ge 0$.
	\end{enumerate}
We write $\mathfrak {X}\models\varphi$ if $\varphi$ is valid on $\mathfrak {X}$, i.e.\ for any $x\in X$ and any valuation $\nu:\mathsf{PV}\rightarrow\wp(X)$, we have $\lb\mathfrak {X},\nu\rb,x\models\varphi$. We will often abbreviate and write $x\models\varphi$ instead of $\mathfrak {X},x\models\varphi$ or $\mathfrak {M},x\models\varphi$, if no confusion may occur regarding which DTS or DTM is discussed. We may also abbreviate and write $\mathfrak {X},S\models\varphi$ instead of $\mathfrak {X},x\models\varphi$ for all $x\in S$. This is especially useful when talking about topological spaces. 
\end{definition}

We will often talk about the \emph{relational semantics} of structures of the form $$\mathfrak{M}=\lb W,\sqsubset  ,f,V\rb.$$ These are Kripke models enriched with a function $f$. We call such structures \emph{dynamic Kripke models} (DKM). A \emph{dynamic Kripke frame} (DKF) is a structure of the form $\mathfrak{F}=\lb W,\sqsubset  ,f\rb$. We denote the satisfaction relation of the relational semantics by $\models_r$. However, when no confusion may occur and when it is clear that we are talking about relational structures instead of metric spaces, we will simply use $\models$ instead of $\models_r$. The definitions of the relational and closure semantics are almost identical and differ only with respect to the topological operators $\dd$ and $*$. Given a dynamic Kripke model $\mathfrak{M}=\lb W,\sqsubset  ,f,V\rb$ and a point $w\in W$, the truth conditions of the topological operations are defined as
\begin{enumerate}[label=4'.]

\item $\mathfrak{M},w\models_r *\varphi\iff \forall v\in W$, if $wRv$ then $\mathfrak{M},v\models\varphi$,

and therefore dually:

 $\mathfrak{M},w\models_r \dd\varphi \iff \exists v\in W$ s.t. $wRv$ and $\mathfrak{M},v\models\varphi$.
 
\end{enumerate}

Let $\mathsf{DTL_{\mathcal{T},\mathcal{F}}}$ denote the set of valid formulas in all dynamic topological systems with a topological space from the class $\mathcal{T}$ and a continuous morphism from the class $\mathcal{F}$. Accordingly, $\mathsf{DTL}=\{\varphi:\;\models\varphi\}$ denotes the set of all validities in dynamic topological systems.

Given a relation $R$ and a finite Kripke frame $\mathfrak{F}$ we define the $R$-depth $dpt_R(w)$ as the length of the longest strong $R$-path emanating at $w_0$, meaning $w_0 R w_1 Rw_2\dots$, where $w_i R w_{i+1}$ but not $w_{i+1} R w_i$. We sometimes write $dpt_{[R]}(w)$ instead, where $[R]$ is the modal operator of the relation $R$.
}

\section{Applications to dynamical systems}

Our logical framework is designed for the specification and formal reasoning about dynamical systems, especially those based on metric spaces.
In many applications, the spaces used have the additional property that they are {\em crowded}, or {\em dense-in-themselves} i.e., they have no isolated points.
In $d$-semantics, this property is expressed by the axiom $\rm D$, i.e.~$\dd\top$.

In the introduction, we mentioned an example involving satellites orbiting a centre of gravity.
Let us revisit this example with our formal language in mind.

\begin{example}[centre of gravity]\label{centreofgravity}
In Figure \ref{figOrbits}, we illustrate a model of bodies orbiting a centre of gravity on a plane.
We may model this as $\mathbb R^2$ with a transition function $f:\mathbb R^2\to \mathbb R^2$ corresponding to the movement of a body over a fixed time interval of $\varepsilon$ seconds.
We may then describe various properties of this system using dynamic topological logic.

First, observe the region $P$.\footnote{More accurately, this region should be denoted $\nu(P)$, but we will simply write $P$ for the sake of illustrations.}
Points in this region will return to $P$ after completing a full orbit (say, in time $n$), but not before that.
This corresponds to the expression $  P\to  \bc^n  P\wedge \neg \bigvee_{i=1}^{n-1} \bc^i  P$.
Conversely, the region $Q$ is a unsafe zone which none of the three orbits indicated in the figure intersect.
Accordingly, $\boxdot P\rightarrow \bigwedge _{i=0}^m \bc^i \boxdot \neg Q$ holds in our model for every $m$;
note that $\boxdot \neg Q$ means that we are inside the region $\neg Q$, not on the boundary.
This is important in a spatial safety condition, since it means that we are guaranteed \emph{not} to be in the unsafe region even if there is a small error in measurement.

\begin{figure}[H]
    \centering
    \includegraphics[scale=0.6]{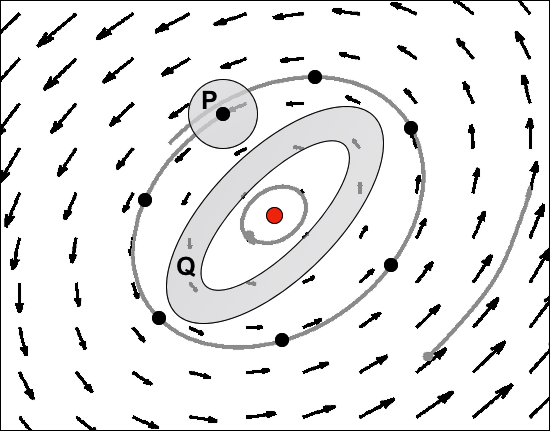}
    \caption{Orbits around a centre of gravity}
    \label{fig:orbits2}
\end{figure}
\end{example}
This is a basic example of a dynamical system arising from a metric space which is influenced by a force, in this case gravity.
Such forces can initiate different phenomena such as \emph{chaos} in the system.

Given a dynamical system $\mathfrak X=\langle X,\tau,f\rangle$, we say that $f:X\rightarrow X$ is \emph{topologically transitive} if for every nonempty open sets $U,V\in \tau$ there exists $n\geq 0$ such that $f^n(U)\cap V\neq \varnothing$. 
This is an important property that together with the set of periodic points of $f$ being dense implies that $\mathcal X$ is a \emph{chaotic} dynamical system;\footnote{There are many alternative definitions to mathematical chaos. We are referring to the original definition by Devaney \cite{devaney}.} in a seminal result, Banks et al.~\cite{banks} showed that such systems exhibit sensitive dependence on initial conditions, i.e.\ the `butterfly effect'.
\begin{example}

    Consider the dynamical system in Figure \ref{toptrans}. 
    Suppose that starting at each of the black points the function reaches the area $P$ within $n$ steps. 
    Then the formula $\diamonddot \bigvee_{i=1}^n \bc^i P$ captures the fact that each open neighbourhood of the red point contains a point reaching $P$ after some amount of time bounded by $n$.
    The existence of such $n$ is guaranteed by topological transitivity.

  \begin{figure}[H]
      \centering
      \includegraphics[scale=0.3]{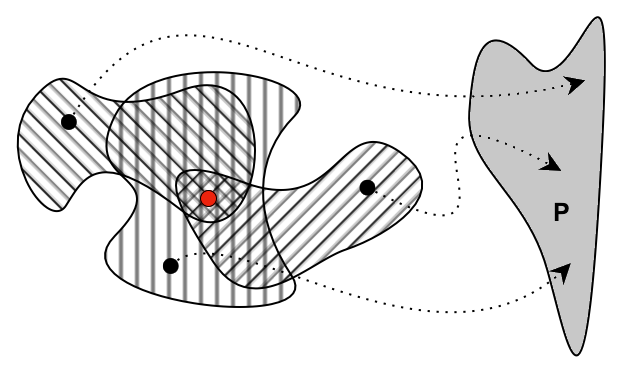}
      \caption{In a system exhibiting topological transitivity, the orbit of each open set intersects every other open set.}
      \label{toptrans}
  \end{figure}
\end{example}

Let us now turn our attention to \emph{topological fixed points}.
Recall that the $\mu$-calculus enriches modal logic with expressions of the form $\mu x.\varphi(x)$, where $x$ appears in the scope of an even number of negations in $\varphi$.
We denote the language of the $\mu$-calculus by $\mathcal L_\mu$.
The intended meaning of this expression is the least fixed point of the map $A \mapsto \varphi(A)$, where $A$ ranges over the subsets of some model $X$.
This notion makes sense when $X$ is a topological space or a metric space.
Dawar and Otto~\cite{DawarO09} showed that the bisimulation-invariant fragment of monadic second order logic ($\bf MSO$) is expressively equivalent to $\mathcal L_{\dd\dtan}$ over the class of finite $\bf K4$ frames. 
Since the $\mu$-calculus is a bisimulation-invariant fragment of $\bf MSO$, Goldblatt and Hodkinson~\cite{Goldblatt2017Spatial} observed that as a corollary, we obtain that for every $\varphi\in\mathcal L_\mu$, there is $\varphi^\infty \in \mathcal L_{\dd\dtan}$ such that $\varphi\leftrightarrow\varphi^\infty$ is valid over the class of metric spaces.
Thus no generality is lost when replacing $\mathcal L_\mu$ with $\mathcal L_{\dd\dtan}$.
When enriched with $\bc$, we obtain a logic where all {\em topological} fixed points can be expressed, but not those defined in terms of $\bc$, such as the `until' operator.

As an important special case, we consider the unary tangle $\dtan \{P\}$ which represents the {\em perfect core} of $P$, i.e.~the largest subset of $\nu( P)$ without isolated points.

\begin{example}\label{exCore} 
In figure \ref{fig:perfectcore}, we see two dynamical systems based on linear transformations on the plane: on the left a rotation, and on the right, a trivial system that maps the entire plane to $0$.
	The system on the left is an immersion (in fact, a {\em homeomorphism}), but the one on the right is not.
	Let $P$ be the top square on the left-hand figure (including both the interior and the boundary), and let $Q$ be the bottom square.
	It should be clear that $\| {\dtan \{P\}}\| = P$, since $P = d(P) = d(P\cap P) $.
	In other words, $P$ is {\em perfect,} i.e.~it is closed and contains no isolated points.
	Similarly, points in $Q$ satisfy $  \dtan \{ \bc P\}$, since every point of $Q$ satisfies $\bc P$ and $Q$ is also perfect.
	Moreover, these points also satisfy $\bc \dtan\{P\}$, so $  \dtan \{ \bc P\} \to \bc \dtan\{P\}$ holds; this is an instance of the axiom ${\rm CTan}_\dd$.
	
	In contrast, let us consider the figure on the right, and let $O $ be the singleton containing the origin; $Q$ is as above.
	As before, we have that every point of $Q$ satisfies $\bc O$, hence since $Q$ is perfect, then $\dtan\{\bc O\}$.
	However, the origin is an isolated point, i.e.~not perfect, so it does not satisfy $\dtan \{O\}$.
	It follows that points of $Q$ satisfy $\dtan\{\bc O\} \wedge \neg \bc \dtan \{O\} $, and ${\rm CTan}_\dd$ fails.
	However, the map is still continuous, so we expect ${\rm CTan}_\diamonddot$ to hold; and, indeed, we observe that $\ctan \{O\} $ holds on the origin, since $O\subseteq c(O)$.
	It readily follows that $\ctan\{\bc O\} \to \bc \ctan \{O\} $ is true in the model on the right, i.e.~${\rm CTan}_\diamonddot$ is valid.
	
	\begin{figure}[H]
	\centering
	\includegraphics[scale=0.25]{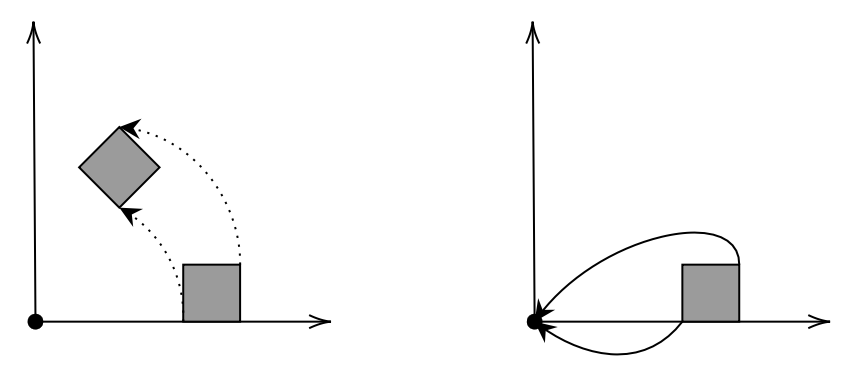}
	\caption{Two dynamical systems on the plane.}
	\label{fig:perfectcore}
\end{figure}
\end{example}

\section{Morphisms between dynamical systems}

In the study of modal logic, it is often useful to work with morphisms between structures preserving validity of formulas.
For Kripke semantics, such maps are called {\em $p$-morphisms.}
These morphsims can be defined and generalised in the context of dynamic derivative spaces as follows.

\begin{definition}[dynamic $p$-morphism]\label{defDmor}
Let $\mathfrak X=\langle X, \rho _\mathfrak X, f_\mathfrak X  \rangle$ and $\mathfrak Y=\langle Y,   \rho_\mathfrak Y, f_\mathfrak Y  \rangle$ be dynamic derivative systems. 
Let $\pi\colon X\to Y$.
We say that $\pi$ is a {\em dynamic $p$-morphism} if
\begin{itemize}
    \item $\pi^{-1}\rho_{\mathfrak Y}(B)=\rho_{\mathfrak X}\pi^{-1}(B)$ for all $B\subseteq Y$, and
    \item $\pi\circ f_{\mathfrak X}=f_{\mathfrak Y}\circ\pi$.
\end{itemize}
\end{definition}
\sloppy
These maps preserve validity in the following sense.

\begin{proposition}\label{propDmap}
Let $\mathfrak X=\langle X, \rho _\mathfrak X, f_\mathfrak X  \rangle$ and $\mathfrak Y=\langle Y,   \rho_\mathfrak Y, f_\mathfrak Y  \rangle$ be dynamic derivative spaces, and suppose that $\pi\colon X\to Y$ is a dynamic $p$-morphism.
Then, for every $\varphi\in\mathcal L_{\dd\dtan}$, if $\mathfrak X\models\varphi$ then $\mathfrak Y\models\varphi$.
\end{proposition}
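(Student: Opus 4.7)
The plan is to fix an arbitrary valuation $\nu_Y$ on $\mathfrak Y$, define the pullback valuation $\nu_X$ on $\mathfrak X$ by $\nu_X(p):=\pi^{-1}(\nu_Y(p))$, and then prove by induction on $\varphi \in \mathcal L_{\dd\dtan}^{\circ}$ the key identity
\[
\|\varphi\|_{\nu_X}\;=\;\pi^{-1}\|\varphi\|_{\nu_Y}.
\]
Once this is in hand, if $\mathfrak X\models\varphi$ then $\|\varphi\|_{\nu_X}=X$, hence $\pi^{-1}\|\varphi\|_{\nu_Y}=X$; since $p$-morphisms in this setting are taken to be onto (which I would verify explicitly, or else note that surjectivity must be included in Definition~\ref{defDmor} for the proposition to transfer validity to all of $Y$), this forces $\|\varphi\|_{\nu_Y}=Y$, giving $\mathfrak Y\models\varphi$.

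The atomic case is the definition of $\nu_X$, and the Boolean cases hold because $\pi^{-1}$ commutes with intersections and complements. The case $\dd\varphi$ follows immediately from the first clause of Definition~\ref{defDmor} together with the inductive hypothesis. The case $\bc\varphi$ uses the equality $\pi \circ f_{\mathfrak X} = f_{\mathfrak Y}\circ\pi$, which gives $f_{\mathfrak X}^{-1}\pi^{-1} = \pi^{-1}f_{\mathfrak Y}^{-1}$ and hence $\|\bc\varphi\|_{\nu_X} = f_{\mathfrak X}^{-1}\pi^{-1}\|\varphi\|_{\nu_Y} = \pi^{-1}\|\bc\varphi\|_{\nu_Y}$.

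The substantive step is $\dtan\Phi$, which reduces to the set-theoretic claim that for any finite family $\{B_i\}_{i=1}^n\subseteq\wp(Y)$,
\[
\rho_{\mathfrak X}^{\infty}\bigl(\{\pi^{-1}B_i\}_i\bigr)\;=\;\pi^{-1}\rho_{\mathfrak Y}^{\infty}\bigl(\{B_i\}_i\bigr).
\]
For $\supseteq$, if $\pi(x)\in A$ with $A\subseteq Y$ tangled by $\{B_i\}$, then the pullback $\pi^{-1}(A)$ contains $x$, and since $\pi^{-1}(A)\subseteq \pi^{-1}\rho_{\mathfrak Y}(B_i\cap A)=\rho_{\mathfrak X}\pi^{-1}(B_i\cap A)=\rho_{\mathfrak X}(\pi^{-1}B_i\cap \pi^{-1}A)$, the set $\pi^{-1}(A)$ witnesses tangledness at $x$. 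For $\subseteq$, if $A'\subseteq X$ witnesses tangledness at $x$, push forward to $\pi(A')\subseteq Y$; the crucial inclusion is
\[
A'\cap\pi^{-1}B_i\;\subseteq\;\pi^{-1}\bigl(B_i\cap\pi(A')\bigr),
\]
which, after applying $\rho_{\mathfrak X}$ and invoking the $p$-morphism condition, yields $A'\subseteq \pi^{-1}\rho_{\mathfrak Y}(B_i\cap\pi(A'))$, i.e.\ $\pi(A')\subseteq\rho_{\mathfrak Y}(B_i\cap\pi(A'))$ for every $i$. Hence $\pi(A')$ is tangled by $\{B_i\}$, and $\pi(x)\in\rho_{\mathfrak Y}^\infty(\{B_i\})$.

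The main obstacle is the $\subseteq$ direction of the tangled-derivative case: one must construct a witness set in $Y$ out of a witness set in $X$, and the naive pushforward $\pi(A')$ only works once one notices the containment $A'\cap\pi^{-1}B_i \subseteq \pi^{-1}(B_i\cap\pi(A'))$, which is what allows $p$-morphism commutation of $\rho$ with $\pi^{-1}$ to be applied. Everything else is a direct unwinding of definitions.
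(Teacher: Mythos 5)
Your proof is correct and is exactly the standard inductive argument (pull back the valuation, show $\|\varphi\|_{\nu_X}=\pi^{-1}\|\varphi\|_{\nu_Y}$, with the pushforward $\pi[A']$ witnessing the hard direction of the tangle case) that the paper itself omits, deferring to the cited references. Your observation that surjectivity of $\pi$ must be assumed for validity to transfer is well taken --- Definition~\ref{defDmor} as written does not include it, and the statement is false without it, although every dynamic $p$-morphism actually used later in the paper is surjective.
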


The proof is standard; see e.g.~\cite{CSLpaper,Goldblatt2017Spatial}.
In practice, we will never use Definition~\ref{defDmor} in this general way; instead, we will specialise it to the specific classes of derivative spaces we are interested in.
Let us begin by describing dynamic $p$-morphisms between derivative frames; see~\cite{BBFD21} for details.

\begin{proposition}
Let $\mathfrak M=\langle  M, \sqsubset   _\mathfrak M, g_\mathfrak M  \rangle$ and $\mathfrak N=\langle N, \sqsubset   _\mathfrak N, g_\mathfrak N  \rangle$ be dynamic derivative frames.
Let $\pi\colon M \to N$.
Then $\pi$ is a dynamic $p$-morphism if
\begin{itemize}
    \item (forth condition) $w\sqsubset  _\mathfrak M v$ implies that $\pi(w) \sqsubset  _\mathfrak N\pi(v)$;
    \item  (back condition) $\pi(w)\sqsubset  _\mathfrak N u$ implies that there is $v\sqsupset_\mathfrak M w$ with $\pi(v) = u$;
    \item $\pi\circ g_\mathfrak M =g_\mathfrak N\circ \pi$.
\end{itemize}
\end{proposition}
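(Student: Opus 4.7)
The plan is to verify the two defining conditions of Definition~\ref{defDmor} under the three hypotheses. The second defining condition, $\pi\circ f_{\mathfrak M}=f_{\mathfrak N}\circ\pi$, is immediate from the third hypothesis once we recognise that $f_{\mathfrak M}=g_{\mathfrak M}$ and $f_{\mathfrak N}=g_{\mathfrak N}$ are the transition functions of the dynamic Kripke frames viewed as dynamic derivative systems. So all the real work lies in showing
\[
\pi^{-1}{\downarrow_{\sqsubset_{\mathfrak N}}}(B)={\downarrow_{\sqsubset_{\mathfrak M}}}\pi^{-1}(B)
\]
for every $B\subseteq N$, using the forth and back conditions on $\pi$.

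First I would unfold both sides using the definition of ${\downarrow_\sqsubset}$: $w\in \pi^{-1}{\downarrow_{\sqsubset_{\mathfrak N}}}(B)$ means there exists $u\sqsupset_{\mathfrak N}\pi(w)$ with $u\in B$, whereas $w\in{\downarrow_{\sqsubset_{\mathfrak M}}}\pi^{-1}(B)$ means there exists $v\sqsupset_{\mathfrak M} w$ with $\pi(v)\in B$. For the inclusion $\supseteq$, given $v\sqsupset_{\mathfrak M}w$ with $\pi(v)\in B$, the forth condition gives $\pi(w)\sqsubset_{\mathfrak N}\pi(v)$, and $u:=\pi(v)$ witnesses $w\in\pi^{-1}{\downarrow_{\sqsubset_{\mathfrak N}}}(B)$. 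For $\subseteq$, given $u\sqsupset_{\mathfrak N}\pi(w)$ with $u\in B$, the back condition supplies some $v\sqsupset_{\mathfrak M}w$ with $\pi(v)=u$, so $\pi(v)\in B$ and $w\in{\downarrow_{\sqsubset_{\mathfrak M}}}\pi^{-1}(B)$.

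There is no genuine obstacle here: the equivalence is essentially the standard bisimulation-style argument specialised to the derivative interpretation of transitive Kripke frames. The only point requiring mild care is keeping the quantifiers straight in the unfoldings and observing that the forth condition is what one needs for one inclusion while the back condition handles the other. Having verified both set-theoretic equalities, the two clauses of Definition~\ref{defDmor} are satisfied, and we conclude that $\pi$ is a dynamic $p$-morphism from $\mathfrak M$ to $\mathfrak N$.
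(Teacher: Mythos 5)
Your proof is correct and is exactly the standard unfolding argument the paper has in mind (the paper omits the proof, deferring to the literature): you reduce the derivative-space condition $\pi^{-1}{\downarrow_{\sqsubset_{\mathfrak N}}}(B)={\downarrow_{\sqsubset_{\mathfrak M}}}\pi^{-1}(B)$ to the forth condition for one inclusion and the back condition for the other, and the commutation clause is the third hypothesis verbatim. No gaps.
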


Our main results are obtained by first establishing them for derivative frames, then `lifting' them to metric spaces.
For this, we will need to consider dynamic $p$-morphisms from dynamic metric systems to dynamic derivative frames. 
The  statement below follows directly by unravelling the definitions.

\begin{proposition}\label{propDMorphMetric}
Let $\mathfrak X=\langle X,\delta,f\rangle$ be a dynamic metric system and let $\mathfrak M=\langle  W, \sqsubset, g \rangle$ be a dynamic derivative frame.
Let $\pi:X\to M$. Then $\pi$ is a dynamic $p$-morphism if for each $x\in X$ and $w\in W$,
\begin{itemize}
    \item  (forth condition) if $w=\pi(x)$, then there exists $\varepsilon>0$ such that, for all $y\in X$, $0<\delta(x,y)<\varepsilon$ implies  $\pi(x)\sqsubset \pi(y)$. 
    \item (back condition) if $w=\pi(x)$ and $w\sqsubset v$, for some $v\in W$, then there exists $y\in X$ with $0<d(x,y)<\varepsilon$ such that $\pi(y)=v$.
    \item $\pi\circ f= g\circ \pi$.
    

\end{itemize}
\end{proposition}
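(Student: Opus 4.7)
The plan is to reduce the two conditions of Definition~\ref{defDmor} to the three bullets in the proposition. The commutativity requirement $\pi \circ f_\mathfrak{X} = f_\mathfrak{Y} \circ \pi$ is literally the third bullet, so no work is needed there. All of the substance lies in establishing the identity
\[
\pi^{-1}\!\!\downarrow_\sqsubset\!(B) \; = \; d\,\pi^{-1}(B) \qquad \text{for every } B \subseteq W,
\]
where $d$ denotes the Cantor derivative of the metric $\delta$. I would prove the two inclusions separately, each using one of the local conditions.

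For the inclusion $\pi^{-1}\!\!\downarrow_\sqsubset\!(B) \subseteq d\,\pi^{-1}(B)$, I would take $x \in X$ with $\pi(x) \in\, \downarrow_\sqsubset\!(B)$, so that there is some $v \in B$ with $\pi(x) \sqsubset v$. Given an arbitrary $\varepsilon > 0$, the back condition (read with $\varepsilon$ universally quantified, as is standard) supplies a point $y \in X$ with $0 < \delta(x,y) < \varepsilon$ and $\pi(y) = v \in B$. Hence $y \in \pi^{-1}(B)$, and since $\varepsilon$ was arbitrary, $x$ is a limit point of $\pi^{-1}(B)$, i.e.\ $x \in d\,\pi^{-1}(B)$.

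For the reverse inclusion $d\,\pi^{-1}(B) \subseteq \pi^{-1}\!\!\downarrow_\sqsubset\!(B)$, I would take $x \in d\,\pi^{-1}(B)$ and apply the forth condition at $x$ to obtain $\varepsilon > 0$ such that every $y$ with $0 < \delta(x,y) < \varepsilon$ satisfies $\pi(x) \sqsubset \pi(y)$. By definition of the Cantor derivative, there exists at least one such $y$ with $y \in \pi^{-1}(B)$, and then $\pi(y) \in B$ together with $\pi(x) \sqsubset \pi(y)$ witnesses $\pi(x) \in\, \downarrow_\sqsubset\!(B)$, as required.

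This is really a matter of unravelling definitions, so there is no genuine obstacle; the only point that deserves care is the interpretation of the $\varepsilon$ appearing in the back condition. As written in the proposition the symbol $\varepsilon$ re-uses the one introduced in the forth condition, but for the argument above (and for consistency with the usual notion of a $p$-morphism into a frame) the back condition must be read as holding for every $\varepsilon > 0$. Once this is recognised, the verification is immediate and the proposition follows from Proposition~\ref{propDmap} applied to the resulting dynamic $p$-morphism.
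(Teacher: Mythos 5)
Your proof is correct and matches the paper's treatment: the paper offers no explicit argument, stating only that the proposition ``follows directly by unravelling the definitions,'' and your two inclusions (back condition for $\pi^{-1}{\downarrow_\sqsubset}(B)\subseteq d\,\pi^{-1}(B)$, forth condition for the converse) are precisely that unravelling, including the correct observation that the $\varepsilon$ in the back condition must be read as universally quantified. The only quibble is your final appeal to Proposition~\ref{propDmap}, which is superfluous: the conclusion to be proved is just that $\pi$ is a dynamic $p$-morphism, so nothing about truth preservation is needed.
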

Such morphisms between dynamic derivative metric spaces to dynamic derivative frames will be used explicitly in Section \ref{topologicaldcompleteness}.
For now, we provide the following example to illustrate them.
\begin{example}\label{metrictokripke}
The map $\pi$ in Figure \ref{fig:metrictokripke} illustrates a $p$-morphism, where the centre of the circle on the left is mapped to the root of the Kripke frame on the right, the rays in red are mapped to the intermediate red points in the Kripke frame, and the open regions in grey are mapped to the leafs of the Kripke frame.
The rotation dynamics of the circle are translated to cycles in the Kripke frame marked in dashed lines.
\begin{figure}[H]
    \centering
   \includegraphics[scale=0.27]{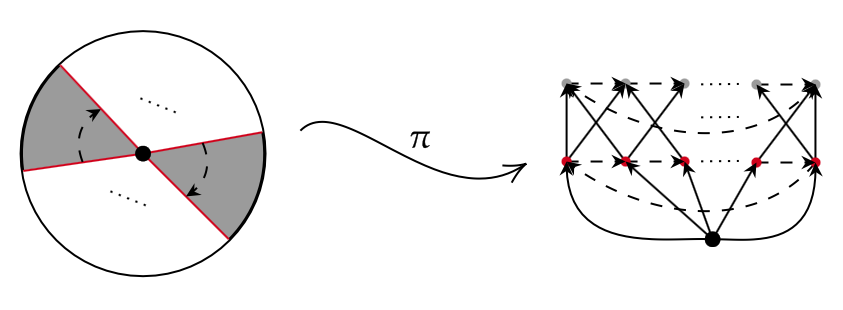}
    \caption{A $p$-morphism from a dynamic metric space to a dynamic Kripke frame.}
    \label{fig:metrictokripke}
\end{figure}
\end{example}
\section{The canonical model}

The first step in our Kripke completeness proof will be a fairly standard canonical model construction.
A \emph{maximal $\Lambda$-consistent set} ($\Lambda$-MCS) $w$ is a set of formulas that is $\Lambda$-consistent, i.e.\ $w\not\vdash_{\Lambda} \bot$, and every set of formulas that properly contains it is $\Lambda$-inconsistent.

Given a logic $\Lambda$ over $\mathcal L^{\circ}_\dd$, let $\mathfrak{M}^\Lambda_{\rm c}=\langle W_{\rm c}, \sqsubset _{\rm c} ,g_{\rm c},\nu_{\rm c}\rangle$ be the canonical model of $\Lambda$, where
 \begin{enumerate}
	\item $W_{\rm c}$ is the set of all $\Lambda$-MCSs;
	\item $w\sqsubset _{\rm c}v$ iff for all formulas $\varphi$, if $\square\varphi\in w$, then $\varphi\in v$;

\item $g_{\rm c}(w)=\{\varphi:\bc\varphi\in w\}$;
\item $\nu_{\rm c}(p)=\{w:p\in w\}$.
\end{enumerate}

It can easily be verified that $\mathbf{K4C}$ defines the class of transitive, monotonic Kripke models. 
Moreover, $\mathbf{K4I}$ defines the class of all transitive, strictly monotonic Kripke models. 
We call these models $\mathbf{K4C}$ models and $\mathbf{K4I}$ models, respectively.

\begin{lemma}\
\begin{enumerate}

\item If $\Lambda$ extends $\mathbf{K4C}$, then the canonical model of $\Lambda$ is transitive and monotonic.

\item If $\Lambda$ extends $\mathbf{K4I}$, then the canonical model of $\Lambda$ is transitive and strictly monotonic.

\item If $\Lambda$ extends $\mathbf{K4D}$, then the canonical model of $\Lambda$ is serial with respect to $\sqsubset$ (i.e., for all $w \in W_{\rm c}$, there is $v\sqsupset w$).

\end{enumerate}
\end{lemma}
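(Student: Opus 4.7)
My plan is to treat the three items in turn. Transitivity of $\sqsubset_{\rm c}$ is the classical correspondence for axiom $4$ and holds in the canonical frame of any normal extension of $\mathbf{K4}$; I will take this for granted and concentrate on the conditions on $g_{\rm c}$ imposed by items 1 and 2, and on the seriality of $\sqsubset_{\rm c}$ in item 3.

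For item 1, I would first derive the dual form $\bc\boxdot\varphi \to \boxdot\bc\varphi$ of ${\rm C}_\diamonddot$ by applying ${\rm Next}_\neg$ and ${\rm Next}_\wedge$ to push negations and conjunctions through $\bc$. The key auxiliary fact, valid in any canonical frame of a normal extension of $\mathbf{K4}$, is that $u \sqsubseteq_{\rm c} u'$ iff $\boxdot\psi \in u$ implies $\psi \in u'$ for every $\psi$. The forward direction is immediate from $\boxdot\psi = \psi \wedge \square\psi$; for the reverse, if $u \neq u'$ pick some $\chi$ in the symmetric difference and WLOG (replacing $\chi$ by $\neg\chi$ if needed) assume $\chi \in u \setminus u'$. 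Then for any $\square\varphi \in u$ we have $\boxdot(\chi \vee \varphi) \in u$, which forces $\chi \vee \varphi \in u'$ and therefore $\varphi \in u'$. Monotonicity now follows by a direct chase: from $w \sqsubseteq_{\rm c} v$ and $\boxdot\psi \in g_{\rm c}(w)$ we obtain $\bc\boxdot\psi \in w$, hence $\boxdot\bc\psi \in w$ by the derived axiom, hence $\bc\psi \in v$ by the characterisation, hence $\psi \in g_{\rm c}(v)$.

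Item 2 follows the same template with ${\rm C}_\dd$ in place of ${\rm C}_\diamonddot$, whose dual is simply $\bc\square\varphi \to \square\bc\varphi$. Given $w \sqsubset_{\rm c} v$ and $\square\psi \in g_{\rm c}(w)$, we successively obtain $\bc\square\psi \in w$, $\square\bc\psi \in w$, $\bc\psi \in v$, and $\psi \in g_{\rm c}(v)$, yielding strict monotonicity. For item 3 the classical Henkin argument suffices: fix $w$ and let $\Gamma := \{\psi : \square\psi \in w\}$. If $\Gamma$ were $\Lambda$-inconsistent, a finite conjunction would refute, and necessitation with normality would force $\square\bot \in w$, contradicting $\dd\top \in w$ (which is present by axiom $\rm D$, since $\square\bot = \neg\dd\top$). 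Extending $\Gamma$ to an MCS by Lindenbaum supplies the required $v \sqsupset_{\rm c} w$.

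The main delicacy I anticipate is the $\boxdot$-characterisation used in item 1, where the interaction between the reflexive and irreflexive parts of $\sqsubseteq_{\rm c}$ must be handled with care in order to extract the monotonicity condition on $g_{\rm c}$ from the continuity axiom; the remaining bookkeeping is essentially routine.
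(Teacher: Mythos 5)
Your proposal is correct and follows essentially the same route as the paper: your argument for item 2 (pushing $\square\psi$ through $g_{\rm c}$ via the dual $\bc\square\varphi\to\square\bc\varphi$ of ${\rm C}_\dd$) is word-for-word the paper's proof, while for items 1 and 3 the paper simply cites references and you supply the standard arguments they contain (the $\boxdot$-characterisation of $\sqsubseteq_{\rm c}$ for monotonicity, and the Lindenbaum/necessitation argument for seriality). The only difference is that your write-up is more self-contained, and it incidentally avoids a small typo in the paper's own derivation.
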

\begin{proof}
Let $\mathfrak{M}^\Lambda_{\rm c}=\langle W,\sqsubset, g, \nu \rangle$. 
The proof of the first statement appears in \cite{CSLpaper}, and the third is standard (see e.g.~\cite{black}).
For the second statement, showing that $\sqsubset$ is transitive is routine and follows from the axiom ${\rm 4}$. 
Suppose that $\Lambda$ extends $\mathbf{K4I}$. We prove that $g$ is strictly monotonic. 
Suppose that $w\sqsubset  v$. 
We consider an arbitrary $\square\psi\in g (w)$. 
By definition $\bc\square \psi\in w$ and so by ${\rm C_{\dd}}$ we have $\square\bc \psi\in v$.
Since  $w\sqsubset  v$, then $\bc\psi\in (v) $ and hence $\psi\in g(v)$. 
It follows that $g(w) \sqsubset   g(v)$, as required, thus $g$ is monotonic.
\end{proof}

The proofs of the following two lemmas are standard and can be found for example in \cite{black}.

\begin{lemma}[existence lemma]
Let $\Lambda$ be a normal modal logic and let $\mathfrak M^\Lambda_{\rm c} = \langle W_{\rm c},\sqsubset  _{\rm c},g_{\rm c},\nu_{\rm c}\rangle$.
Then, for every $w\in W_{\rm c}$ and every formula $\varphi$ in $\Lambda$, if $\dd\varphi\in w$ then there exists a point $v\in W_{\rm c}$ such that $w\sqsubset  _{\rm c} v$ and $\varphi\in v$.
\end{lemma}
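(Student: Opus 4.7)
The plan is to run the standard existence-lemma argument, tailored to the definitions of $\sqsubset_{\rm c}$ and $\nu_{\rm c}$ given above. Fix $w \in W_{\rm c}$ with $\dd\varphi \in w$. I want to produce a $\Lambda$-MCS $v$ containing $\varphi$ together with every $\psi$ such that $\dn\psi \in w$, since this is exactly what the definition of $\sqsubset_{\rm c}$ requires.

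First, I would define
\[
\Sigma := \{\varphi\} \cup \{\psi : \dn\psi \in w\}
\]
and show that $\Sigma$ is $\Lambda$-consistent. Suppose for contradiction that it is not. Then there exist finitely many $\psi_1,\dots,\psi_n$ with $\dn\psi_i \in w$ such that $\vdash_\Lambda (\psi_1 \wedge \cdots \wedge \psi_n) \to \neg\varphi$. Applying ${\rm Nec}_\dn$ and then using the normality axiom $\rm K$ repeatedly (together with the $\mathbf K$-theorem $\dn\psi_1 \wedge \cdots \wedge \dn\psi_n \to \dn(\psi_1\wedge\cdots\wedge\psi_n)$, derivable from $\rm K$ and ${\rm Nec}_\dn$), I obtain $\vdash_\Lambda (\dn\psi_1 \wedge \cdots \wedge \dn\psi_n) \to \dn\neg\varphi$. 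Since each $\dn\psi_i \in w$ and $w$ is an MCS, this forces $\dn\neg\varphi \in w$, i.e.\ $\neg\dd\varphi \in w$, contradicting $\dd\varphi \in w$.

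Next, by the Lindenbaum lemma for $\Lambda$ (available since $\Lambda$ is a normal modal logic), I extend $\Sigma$ to a maximal $\Lambda$-consistent set $v$. Then $\varphi \in v$ by construction, and for every $\psi$ with $\dn\psi \in w$ we have $\psi \in \Sigma \subseteq v$, which is precisely the defining condition for $w \sqsubset_{\rm c} v$. This gives the required witness.

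There is no genuine obstacle here, as the argument is the textbook Henkin-style existence proof; the only step that requires mild care is the derivation $\vdash_\Lambda \bigwedge_i \dn\psi_i \to \dn\bigwedge_i \psi_i$, which relies on $\Lambda$ being normal and hence on having $\rm K$ and ${\rm Nec}_\dn$ available. The lemma does not depend on $\rm 4$, seriality, or any temporal axiom, and no work involving $\bc$ or $\dtan$ is needed since the statement concerns only the $\dd$-successors in the canonical frame.
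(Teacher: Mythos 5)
Your proof is correct and is exactly the standard Henkin-style existence argument; the paper itself does not spell out a proof but simply defers to the textbook treatment (citing Blackburn et al.), which is the same argument you give. The only cosmetic point is that $\dn\neg\varphi$ is literally $\neg\dd\neg\neg\varphi$, so one extra (trivial) step using $\vdash\dd\neg\neg\varphi\leftrightarrow\dd\varphi$ is needed to reach the contradiction with $\dd\varphi\in w$.
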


This is already enough to obtain a standard truth lemma for tangle-free languages, and thus completeness.

 \begin{lemma}[truth lemma]\label{lemmTruth}
 Let $\Lambda$ be a normal modal logic without tangle.
For every $w\in W_{\rm c}$ and every formula $\varphi$ in $\Lambda$, $$\mathfrak M^\Lambda_{\rm c}, w\models \varphi \text{ iff }\varphi\in w.$$
 \end{lemma}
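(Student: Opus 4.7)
The plan is a standard induction on the construction of $\varphi$, where the base case for propositional variables follows directly from the definition of $\nu_{\rm c}$, and the Boolean cases $\neg\psi$ and $\psi\wedge\chi$ follow from maximal consistency of elements of $W_{\rm c}$ (using $\neg\psi\in w$ iff $\psi\notin w$, and $\psi\wedge\chi\in w$ iff $\psi\in w$ and $\chi\in w$). Since the statement restricts to tangle-free formulas, I do not need to worry about $\dtan$, and the only modal cases are $\dd$ and $\bc$.

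For the case $\varphi=\dd\psi$, I would argue both directions separately. Suppose $\dd\psi\in w$; by the existence lemma stated immediately above, there is $v\in W_{\rm c}$ with $w\sqsubset_{\rm c} v$ and $\psi\in v$. By the induction hypothesis applied to $\psi$, $\mathfrak{M}^\Lambda_{\rm c},v\models\psi$, so $\mathfrak{M}^\Lambda_{\rm c},w\models\dd\psi$. Conversely, if $\dd\psi\notin w$ then by maximality $\square\neg\psi\in w$, so by the definition of $\sqsubset_{\rm c}$ every successor $v$ of $w$ satisfies $\neg\psi\in v$, hence (by induction) fails $\psi$; therefore $\mathfrak{M}^\Lambda_{\rm c},w\not\models\dd\psi$.

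For the case $\varphi=\bc\psi$, the induction step reduces, via the induction hypothesis for $\psi$, to showing that $\psi\in g_{\rm c}(w)$ iff $\bc\psi\in w$. The right-to-left direction is literally the definition of $g_{\rm c}$. For the left-to-right direction I need to verify that $g_{\rm c}(w)$ is itself a $\Lambda$-MCS: consistency comes from ${\rm Nec}_\bc$ together with ${\rm Next}_\wedge$ (which lets us pull $\bc$ outside of finite conjunctions, so any inconsistency in $g_{\rm c}(w)$ would yield $\bc\bot\in w$, contradicting consistency of $w$); maximality then follows from ${\rm Next}_\neg$, which ensures $\psi\notin g_{\rm c}(w)$ iff $\bc\psi\notin w$ iff $\bc\neg\psi\in w$ iff $\neg\psi\in g_{\rm c}(w)$.

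The only step that is not purely routine bookkeeping is the verification that $g_{\rm c}(w)$ is an MCS, and even this is a direct consequence of the ${\rm Next}_\neg$, ${\rm Next}_\wedge$ axioms together with $\bc$-necessitation. Since these all belong to $\Lambda\mathbf F\subseteq\Lambda$ by the hypothesis that $\Lambda$ is a normal modal logic (in the sense used in this paper, extending $\mathbf K$ and, implicitly in the context of the canonical model for the temporal operator, $\Lambda\mathbf F$), no further effort is required, and the induction closes.
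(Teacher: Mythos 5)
Your proof is correct and is exactly the standard canonical-model induction that the paper itself does not spell out but defers to the literature (it cites \cite{black} and states only the existence lemma as the key ingredient). Your extra care in checking that $g_{\rm c}(w)$ is a $\Lambda$-MCS via ${\rm Next}_\neg$, ${\rm Next}_\wedge$ and ${\rm Nec}_\bc$ is the right (and only non-routine) point to verify, and you correctly note that this implicitly requires $\Lambda$ to contain the $\Lambda\mathbf{F}$ axioms, which is indeed tacitly assumed whenever the canonical model is equipped with $g_{\rm c}$.
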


\begin{corollary}
The logic $\mathbf{K4C}$ is sound and complete with respect to the class of all transitive and monotonic dynamic derivative frames, and $\mathbf{K4I}$ is sound and complete with respect to the class of all transitive and strictly monotonic dynamic derivative frames.
The logics $\mathbf{K4DC}$ and $\mathbf{K4DI}$ are sound and complete for the respective classes of serial frames.
\end{corollary}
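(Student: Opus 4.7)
The plan is to read off this corollary from the three preceding lemmas by a standard canonical model argument. For soundness, I would verify the axiom schemes one by one: the propositional part together with ${\rm K}$, ${\rm 4}$, ${\rm MP}$, and ${\rm Nec}_{\dn}$ are sound on all transitive derivative frames by the classical completeness of $\mathbf{K4}$; the temporal axioms ${\rm Next}_\neg$, ${\rm Next}_\wedge$, and ${\rm Nec}_{\bc}$ hold whenever $\bc$ is interpreted via a function; ${\rm C}_{\diamonddot}$ (resp.\ ${\rm C}_{\Diamond}$) is valid on monotonic (resp.\ strictly monotonic) frames by Lemma \ref{lemCH}, applied to the derivative space $\langle W, \downarrow_\sqsubset\rangle$ associated with the frame, together with the earlier observation that a monotonic function on a derivative frame is continuous with respect to $\downarrow_\sqsubset$ and a strictly monotonic one is an immersion; and ${\rm D}$ holds exactly when the accessibility relation is serial.

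For completeness I would argue contrapositively. Fix $\Lambda \in \{\mathbf{K4C}, \mathbf{K4I}, \mathbf{K4DC}, \mathbf{K4DI}\}$ and suppose $\not\vdash_\Lambda \varphi$. Then $\{\neg\varphi\}$ is $\Lambda$-consistent, hence by Lindenbaum's Lemma extends to a $\Lambda$-MCS $w \in W_{\rm c}$. The preceding canonical frame lemma ensures that the underlying frame of $\mathfrak M^\Lambda_{\rm c}$ is transitive, is monotonic when $\Lambda \supseteq \mathbf{K4C}$, strictly monotonic when $\Lambda \supseteq \mathbf{K4I}$, and is additionally serial when $\Lambda \supseteq \mathbf{K4D}$. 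Since none of the four logics in question involves $\dtan$, the truth lemma (Lemma \ref{lemmTruth}) for tangle-free normal modal logics applies directly and yields $\mathfrak M^\Lambda_{\rm c}, w \models \neg\varphi$. Hence $\varphi$ is refuted by the valuation $\nu_{\rm c}$ on a frame drawn from the relevant class, witnessing its failure of validity.

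The only subtlety worth flagging is that $g_{\rm c}$ must be a genuine total function rather than a relation; this is immediate from maximality of $w$ together with ${\rm Next}_\neg$ and ${\rm Next}_\wedge$, which guarantee that $\{\psi : \bc\psi \in w\}$ is itself a $\Lambda$-MCS for every $w \in W_{\rm c}$. Beyond this bookkeeping, there is no real obstacle: the preceding canonical frame lemma and truth lemma have already done the heavy lifting, and the corollary is essentially a packaging of them.
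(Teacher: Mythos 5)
Your proposal is correct and follows essentially the same route as the paper: the corollary is read off from the canonical-frame lemma (transitivity, monotonicity or strict monotonicity, and seriality of $\mathfrak M^\Lambda_{\rm c}$) together with the tangle-free truth lemma, with soundness checked axiom by axiom via Lemma \ref{lemCH}. Your remark that ${\rm Next}_\neg$ and ${\rm Next}_\wedge$ make $g_{\rm c}$ a well-defined total function on MCSs is a sensible piece of bookkeeping that the paper leaves implicit.
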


Thus we obtain completeness for tangle-free logics in a standard way.
However, Lemma \ref{lemmTruth} fails in the presence of tangle, and so we will have to work a bit harder to achieve completeness in this setting.

\color{black}

\section{A finitary accessibility relation}

One key ingredient in our finite model property proof will be the construction of a `finitary' accessibility relation $\sqsubset  _{\Phi}$ on the canonical model.
This accessibility relation will have the property that each point has finitely many successors, yet the existence lemma will hold for formulas in a prescribed finite set $\Phi$.

\begin{definition}[$\varphi$-final set]\label{finalset}
Fix a logic $\Lambda$. A set $w$ is said to be a \emph{$\varphi$-final set} (or point) if $w$ is a $\Lambda$-MCS, $\varphi\in w$, and whenever $w \sqsubset  _{\rm c} v$ and $\varphi\in v$, it follows that $v\in C(w)$.
\end{definition}

Let $\sqsubseteq_{\rm c}$ be the reflexive closure of $\sqsubset_{\rm c}$.
It will be convenient to characterise $\sqsubseteq_{\rm c}$ in the canonical model syntactically.
Recall that $\boxdot\varphi:= \varphi\wedge\square \varphi$.
The following is proven in~\cite{CSLpaper} but goes back to Fine~\cite{Fin74c}.

\begin{lemma}\label{choice2}
If $\dd\varphi\in w$, then there is $\varphi$-final point $v$ such that $w \sqsubset  _{\rm c} v$.
\end{lemma}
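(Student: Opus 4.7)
The plan is to apply Zorn's lemma to the collection of clusters that lie above $w$ and contain a point witnessing $\varphi$. Let $V = \{v \in W_{\rm c} : w \sqsubset_{\rm c} v \text{ and } \varphi \in v\}$; the existence lemma ensures that $V \neq \varnothing$, furnishing the base case. I would quotient $V$ by the cluster equivalence $\equiv$ to form $\tilde V = V/{\equiv}$, and order it by $[v] \preceq [u]$ iff $v \sqsubseteq_{\rm c} u$, which is easily checked to be well-defined and a partial order. A $\preceq$-maximal element $[v^*]$ then settles the lemma: if $v^* \sqsubset_{\rm c} u$ with $\varphi \in u$, then $[u] \in \tilde V$ and $[v^*] \preceq [u]$, so by maximality $[u] = [v^*]$, i.e., $u \in C(v^*)$; hence $v^*$ is $\varphi$-final.

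The bulk of the argument is the chain condition for Zorn. Given a $\preceq$-chain $([v_\alpha])_\alpha$ with no top element (if it has a top, take it), I would consider
\[ U = \{\varphi\} \cup \bigcup_\alpha \{\boxdot\chi : \boxdot\chi \in v_\alpha\}. \]
Axiom ${\rm 4}$ ensures that the $\boxdot$-sets grow monotonically along the chain, so any finite subset of $U$ sits inside a single $v_{\alpha_0}$ and is therefore $\Lambda$-consistent; hence $U$ is consistent and extends to an MCS $v^*$. To place $v^*$ above each $v_\alpha$ (and above $w$) in $\sqsubset_{\rm c}$, I would exploit axiom ${\rm 4}$ once more: from $\Box\chi \in v_\alpha$ and $v_\alpha \sqsubset_{\rm c} v_{\alpha'}$ for some $\alpha' > \alpha$, one obtains $\chi \in v_{\alpha'}$ and $\Box\chi \in v_{\alpha'}$, hence $\boxdot\chi \in v_{\alpha'} \subseteq U$, so $\chi \in v^*$; the case of $w \sqsubset_{\rm c} v^*$ is handled analogously using $v_0$.

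The main subtlety I expect is the asymmetry between $\Box\chi$ and $\boxdot\chi$: while $\boxdot$-formulas transfer cleanly up the chain thanks to axiom ${\rm 4}$, the bare $\Box$-formulas do not, since $\Box\chi \in v_\alpha$ alone does not entail $\chi \in v_\alpha$. The remedy is to push each $\Box\chi$ into a strict successor in the chain where $\chi$ actually lies, upgrade it to $\boxdot\chi$ there, and only then funnel it into $v^*$. This is precisely why the argument must split the cases of a chain with and without a top, and why transitivity of $\sqsubset_{\rm c}$ (encoded by axiom ${\rm 4}$) is indispensable throughout.
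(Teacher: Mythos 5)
Your proof is correct and is essentially the standard Fine-style argument that the paper itself defers to by citation (to Fern\'andez-Duque--Montacute and Fine): apply Zorn's lemma to the clusters above $w$ containing $\varphi$, bounding a chain by extending $\{\varphi\}$ together with the accumulated $\boxdot$-formulas to an MCS, with axiom ${\rm 4}$ guaranteeing both the consistency of that set and that the resulting point lies $\sqsubset_{\rm c}$-above the chain. The handling of the $\Box$ versus $\boxdot$ asymmetry by pushing each $\Box\chi$ into a strict successor in the chain is exactly the right fix, so no gap remains.
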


We are now ready to prove the main result of this section regarding the existence of the finitary relation $\sqsubset_\Phi$.

\begin{lemma}\label{lemmPhiConds}
Let $\Lambda$ extend $\mathbf{K4^{\infty}}$ and let $\Phi$ be a finite set of formulas closed under subformulas. 
There is an auxiliary relation $\sqsubset  _{\Phi} $ on the canonical model of $\Lambda$ such that:
\begin{enumerate}[(i)]
\item\label{condOne} $\sqsubset  _{\Phi}$ is a subset of $\sqsubset  _{\rm c}$.
\item\label{condTwo} For each $w\in W$, the set ${\sqsubset  _{\Phi}}(w)$ of $\sqsubset  _{\Phi}$-successors of $w$ is finite.
\item\label{condThree} If $\dd\varphi\in w\cap\Phi$, then there exists $v\in W$ with $w\sqsubset  _{\Phi}v$ and $\varphi\in v$.


\item\label{condNew}  If $\dtan \Psi\in w\cap\Phi$, then there exists $\{v_\psi :\psi\in \Psi\}\subseteq W$ (not necessarily distinct) with $w\sqsubseteq  _{\Phi}v_\psi$, $\psi, \dtan \Psi \in v_\psi $, and $v_\psi\sqsubset  v_\varphi$ for all $\psi,\varphi\in \Psi$ (including $\psi=\varphi$).

\item\label{condFour} If $w \equiv  _{\rm c} v  $ then ${\sqsubset  _{\Phi}(w)} = {\sqsubseteq  _{\Phi}(v)}  $.
\item\label{condFive} $\sqsubset  _{\Phi}$ is transitive.
\end{enumerate}
\end{lemma}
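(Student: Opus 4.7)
The plan is to construct $\sqsubset_\Phi$ by equipping each canonical point $w$ with a finite set of witness-successors, chosen uniformly across each cluster of $W_{\rm c}$, and then closing under transitivity. A preliminary observation is that $w\equiv_{\rm c} v$ entails $\dd\varphi\in w\cap\Phi$ iff $\dd\varphi\in v\cap\Phi$, and similarly for $\dtan\Psi$: both follow from the transitivity of $\sqsubset_{\rm c}$ together with the derivable equivalence $\dtan\Psi\leftrightarrow \dd\dtan\Psi$ obtained from ${\rm Fix}_{\dd^\infty}$ and ${\rm Ind}_{\dd^\infty}$. Hence the witness selection will be well-defined per cluster, which is essential for condition~(v).

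For each $\dd\varphi\in w\cap\Phi$, Lemma~\ref{choice2} furnishes a $\varphi$-final successor $v\sqsupset_{\rm c}w$; these witnesses take care of condition~(iii). For each $\dtan\Psi\in w\cap\Phi$, the derivability of $\dd\dtan\Psi\in w$ lets me apply Lemma~\ref{choice2} to obtain a $\dtan\Psi$-final successor $v^*$ of $w$. Since $\dtan\Psi\in v^*$, the axiom ${\rm Fix}_{\dd^\infty}$ forces $\dd(\psi\wedge\dtan\Psi)\in v^*$ for every $\psi\in\Psi$, yielding $\sqsubset_{\rm c}$-successors $v_\psi$ of $v^*$ with $\psi,\dtan\Psi\in v_\psi$. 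The $\dtan\Psi$-finality of $v^*$ forces each $v_\psi$ to lie in $C(v^*)$, and $v^*\sqsubset_{\rm c}v_\psi\sqsubseteq_{\rm c}v^*$ together with transitivity yields $v^*\sqsubset_{\rm c}v^*$, so $C(v^*)$ is reflexive. Including $v^*$ and the $v_\psi$ among the $\sqsubset_\Phi$-successors of $w$, together with internal edges among the $v_\psi$ within $C(v^*)$, settles condition~(iv).

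To enforce condition~(v) I would make cluster mates mutual $\sqsubset_\Phi$-successors, which is compatible with condition~(i) because every non-trivial $\sqsubset_{\rm c}$-cluster is reflexive. Condition~(vi) is then obtained by taking the transitive closure of the resulting edge relation. The main obstacle is securing condition~(ii): showing that successor sets stay finite even though each newly added witness itself calls for further witnesses. The key observations here are that (a) only finitely many $\Phi$-types are realised among $\Lambda$-MCSs; (b) the finality of any $\dd\varphi$-witness $v$ forces all further $\dd\varphi$-witnesses to stay inside $C(v)$; and (c) the $\dtan\Psi$-construction confines every $\dtan\Psi$-related witness to the single reflexive cluster $C(v^*)$. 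Choosing witnesses consistently by $\Phi$-type, only finitely many new points are produced per round of witness selection, so the process stabilises after boundedly many rounds, yielding the required finite, transitive, cluster-uniform $\sqsubset_\Phi\subseteq\sqsubset_{\rm c}$.
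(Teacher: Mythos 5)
Your overall strategy coincides with the paper's: pick, uniformly per $\sqsubset_{\rm c}$-cluster, finitely many final-point witnesses (via Lemma~\ref{choice2} and the ${\rm Fix}_{\dd^\infty}$ axiom for the tangle case) and take the transitive closure. However, there is one step that fails outright. To secure condition~(\ref{condFour}) you propose to ``make cluster mates mutual $\sqsubset_\Phi$-successors.'' Clusters of the canonical model are in general \emph{infinite} (e.g.\ the theories of the points of an infinite reflexive cluster validating the logic all lie in one $\sqsubset_{\rm c}$-cluster), so this policy puts infinitely many points into $\sqsubset_\Phi(w)$ and destroys condition~(\ref{condTwo}). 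You have in fact run into an infelicity in the statement: read literally, $\sqsubset_\Phi(w)=\sqsubseteq_\Phi(v)$ forces $w\sqsubset_\Phi v$ for every cluster mate $v$, which is incompatible with~(\ref{condTwo}). The intended content, and what the paper's construction actually delivers, is that cluster mates have \emph{identical successor sets}; this is achieved for free by making the witness function depend only on $C(w)$ and on $w\cap\Phi$ (which is constant on clusters, as you correctly observe), with no extra intra-cluster edges added beyond the finitely many chosen witnesses.

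The second weak point is condition~(\ref{condTwo}) itself. Your ingredients (a)--(c) are the right ones, but the conclusion ``only finitely many new points are produced per round, so the process stabilises after boundedly many rounds'' is asserted rather than proved: finitely many new points per round is exactly the situation of a finitely-branching tree, which can still be infinite. The paper closes this by contradiction: an infinite $\sqsubset_\Phi$-successor set yields, via K\"onig's lemma, an infinite injective chain $u\sqsubset_\Phi^0 v_1\sqsubset_\Phi^0 v_2\sqsubset_\Phi^0\cdots$; since $\Phi$ is finite, some $\theta$ recurs infinitely often as the finality-formula along the chain, and $\theta$-finality of the first such $v_{i_0}$ traps the tail of the chain inside $C(v_{i_0})$, whose (per-cluster) witness set is finite --- contradicting injectivity. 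Note also that this argument needs every chosen witness to be \emph{final} for some formula; in your tangle step the points $v_\psi$ are obtained by the bare existence lemma, whereas the paper takes them to be $(\psi\wedge\dtan\Psi)$-final, which is also what the downstream ``distinguished'' condition requires. You should either make the $v_\psi$ final or spell out how your confinement observation~(c) substitutes for finality in the chain argument.
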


\begin{proof}
Let $C$ be any cluster of points in $W$ and define 
$$ { \sqsubset  _{\rm c}}(C)=\bigcup\{{\sqsubset  _{\rm c}}(v):v\in C\}.$$
 We construct the transitive relation $\sqsubset  _{\Phi}$ as follows:
 Using Lemma \ref{choice2}, we use the axiom of choice to choose a function that for each formula $\varphi$ and each cluster $C$ such that $\dd\varphi \in \bigcup C$, assigns a $\varphi$-final point $w(\varphi,C)$ such that $w(\varphi,C)\in { \sqsubset  _{\rm c}}(C)$.
Similarly, if $\dtan\Gamma \in \bigcup C$, we choose a $\dtan\Gamma $-final point $v \in { \sqsubset  _{\rm c}}(C)$.
For each $\varphi\in\Gamma$, we have that $\vdash\dtan\Gamma\to \dd(\varphi\wedge\dtan\Gamma)$, and we may choose $(\varphi\wedge\dtan\Gamma)$-final $v(\varphi,\dtan\Gamma,C) \sqsupseteq_{\rm c} v$.
Since $v$ was already $\dtan\Gamma$-final, we must have that $v(\varphi,\dtan\Gamma,C)$ is in the same cluster as $v$.

\sloppy Set $u \sqsubset  _{\Phi}^0 u'$ iff $u\sqsubset_{\rm c} u'$ and there exists $\dd \psi\in u\cap \Phi$ such that $u' = w(\psi,C(u)) $, or there are $\dtan\Gamma\in u\cap \Phi$ and $\varphi\in \Gamma$ such that $u'= v(\varphi,\dtan\Gamma,C(u))$.
 Let $\sqsubset  _{\Phi}$ be the transitive closure of $\sqsubset  _{\Phi}^0$.

 It is clear that \ref{condOne}, \ref{condThree}, \ref{condNew}, \ref{condFour}, and \ref{condFive} follow directly from the construction: \ref{condOne} follows from the fact that $\sqsubset_\Phi$ is the transitive closure of $\sqsubset^0_\Phi$; \ref{condThree} and \ref{condNew} follow from Lemma \ref{finalset}; \ref{condFour} follows from the fact that $v\in C(w)$ and by assuming that $v\sqsubseteq_\Phi u$ and unravelling the definition of $\sqsubseteq_\Phi$; \ref{condFive} follows from the definition of transitive closure.
 
We continue to verify condition \ref{condTwo}. First observe that ${\sqsubset_{\Phi}^0}(u)$ is finite by construction, as it contains at most one element for each $\varphi\in \Phi$ and another for each $\varphi\in \Gamma$ with $\dtan\Gamma\in \Phi$.
Now, if $u \sqsubset  _{\Phi}v$, then there is a sequence
$$ u\sqsubset  _{\Phi}^0v_1\sqsubset  _{\Phi}^0\dots \sqsubset  _{\Phi}^0v_n=v.$$
By taking a minimal such sequence, we may assume that it is injective.
Consider the tree consisting of all such sequences (ordered by the initial segment relation).
This is a finitely-branching tree, as ${\sqsubset_{\Phi}^0}(x)$ is always finite.
Moreover, if ${\sqsubset_{\Phi}}(u)$ is infinite, then this tree is infinite.
By K\"onig's lemma, there is an infinite sequence
$$ u \sqsubset  _{\Phi}^0v_1\sqsubset  _{\Phi}^0 v_2\sqsubset  _{\Phi}^0 \dots .$$

By definition of $\sqsubset_{\Phi}^0$, for each $i \in  \omega $ there is $\varphi_i \in \Phi$ such that $v_{i+1}$ is $\varphi_i$-final.
Since ${\sqsubset_{\Phi}^0} \subseteq {\sqsubset_{\rm c}}$, we have that $v_{i} \sqsubseteq_{\rm c} v_j$ whenever $i\leq j$.
Since $\Phi$ is finite, there is some $\theta\in\Phi$ such that $v_{i}$ is $\theta$-final for infinitely many values of $i$.
Let $i_0$ be the least such value.
If $i>i_0$ is any other such value, $v_{i_0} \sqsubset_{\rm c} v_i$ together with $\theta$-finality of $v_{i_0}$ yields $v_i \sqsubset_{\rm c} v_{i_0}$.
Thus $v_i\in C(v_{i_0})$ and $ v_{i} \sqsubset_{\Phi}^0 v_{i+1}$, which by definition of $\sqsubset^0_{\Phi}$ yields $  v_{i+1} \in {\sqsubset_{\Phi}^0} (v_{i_0} )$.
But ${\sqsubset_{\Phi}^0} (v_{i_0} )$ is finite, contradicting that the chain is infinite and injective.
\end{proof}
\section{Stories and $\Phi$-morphisms}
In this subsection we show that the logics $\mathbf{K4C^{\infty}}$, $\mathbf{K4DC^{\infty}}$, $\mathbf{K4I^{\infty}}$, and $\mathbf{K4DI^{\infty}}$ have the finite model property by constructing finite models and truth preserving maps from these models to the canonical model. 

If $\sqsubset$ is a transitive relation on $A$, $\lb A,\sqsubset \rb$ is called \emph{tree-like} if whenever $a\sqsubseteq c$ and $b\sqsubseteq c$, it follows that $a\sqsubseteq b$ or $b\sqsubseteq a$.
We will use labelled tree-like structures called {\em moments} to record the `static' information at a point; that is, the structure involving $\sqsubset$, but not $f$.

\begin{definition}[moment]
Fix $\Lambda\in \{{\bf K4},{\bf K4D}\}$.
A {\em $\Lambda$-moment} is a structure $\mathfrak m = \langle |\mathfrak m|,\sqsubset  _\mathfrak m,\nu_\mathfrak m,r_\moment \rangle $, where ${\langle |\mathfrak m|,\sqsubset  _\mathfrak m \rangle}$ is a finite tree-like $\Lambda$-frame with a root $r_\moment$, and $\nu_\mathfrak m$ is a valuation on $|\mathfrak m|$.\end{definition}

In order to also record `dynamic' information, i.e.~information involving the transition function, we will stack up several moments together to form a `story'.
Below, $ \bigsqcup$ denotes a disjoint union and $f[S]$ denotes the image of a set $S$ under the map $f$.

\begin{definition}[story and immersive story]\label{defStory}
A {\em story (with duration $I$)} is a structure  $\gog = \langle|\gog|,\sqsubset  _\gog,f_\gog,\nu_\gog,r_\gog \rangle$ such that there exist $I< \omega$, moments $\gog_i = \langle |\gog_i|,\sqsubset  _i,\nu_i,r_i \rangle$ for each $i\leq I$, and functions $(f_i)_{i<I}$ such that:
\begin{enumerate}

\item $|\gog| = \bigsqcup_{i\leq I} |\gog_i| $;

\item $\sqsubset  _\gog = \bigsqcup_{i\leq I} \sqsubset  _i $;

\item $\nu_\gog(p) = \bigsqcup_{i\leq I} \nu_i(p) $ for each variable $p$;

\item $r_\gog =r_0$;

\item $f_\gog = {\rm Id}_{I} \cup \bigsqcup_{i <I} f_i $ with $f_i\colon |\gog_i| \to |\gog_{i+1}|$ being a monotonic map such that $f_\gog$ is
\begin{description}

\item[root preserving:] $f_i(r_i) = r_{i+1}$ for all $i<I$,

\item[almost injective:] for every $x,y\in|\gog_i|$, if $f_\gog(x) = f_\gog (y)$ then $f_\gog(x)$ is irreflexive,

\item[cluster-preserving:] for every $x\in |\gog|$, $C(f_\gog(x)) = f_\gog[C(x)] $, and

\item[stabilising:] ${ f}_{I}$ is the identity on $|\gog_I|$.

\end{description}
\end{enumerate}
If moreover each $f_i$ is strictly monotonic and injective, we say that $\gog$ is an {\em immersive story.}
If each $\gog_i $ is a $\Lambda$-moment we say that $\gog$ is a {\em $\Lambda{\bf C}$-story,} and if $\gog$ is immersive we say that $\gog$ is a {\em $\Lambda{\bf I}$-story.}
\end{definition}

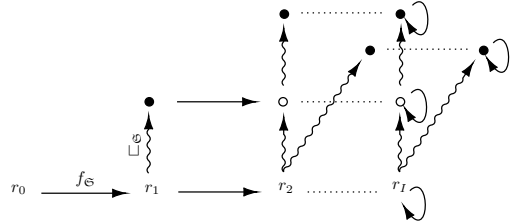
\begin{figure}[H]

\centering

\scalebox{0.63}{%
\tikzset{every picture/.style={line width=0.75pt}} 

\begin{tikzpicture}[x=0.75pt,y=0.75pt,yscale=-1,xscale=1]

\draw    (458.5,199) .. controls (475.25,182.26) and (475.5,246.04) .. (454.48,212.6) ;
\draw [shift={(453.5,211)}, rotate = 419.26] [color={rgb, 255:red, 0; green, 0; blue, 0 }  ][fill={rgb, 255:red, 0; green, 0; blue, 0 }  ][line width=0.75]    (10.93,-3.29) .. controls (6.95,-1.4) and (3.31,-0.3) .. (0,0) .. controls (3.31,0.3) and (6.95,1.4) .. (10.93,3.29)   ;
\draw    (166.5,201) -- (231.5,201) ;
\draw [shift={(233.5,201)}, rotate = 180] [color={rgb, 255:red, 0; green, 0; blue, 0 }  ][fill={rgb, 255:red, 0; green, 0; blue, 0 }  ][line width=0.75]    (10.93,-3.29) .. controls (6.95,-1.4) and (3.31,-0.3) .. (0,0) .. controls (3.31,0.3) and (6.95,1.4) .. (10.93,3.29)   ;
\draw    (251.5,185) .. controls (249.83,183.33) and (249.83,181.67) .. (251.5,180) .. controls (253.17,178.33) and (253.17,176.67) .. (251.5,175) .. controls (249.83,173.33) and (249.83,171.67) .. (251.5,170) .. controls (253.17,168.33) and (253.17,166.67) .. (251.5,165) .. controls (249.83,163.33) and (249.83,161.67) .. (251.5,160) .. controls (253.17,158.33) and (253.17,156.67) .. (251.5,155) .. controls (249.83,153.33) and (249.83,151.67) .. (251.5,150) -- (251.5,149) -- (251.5,141) ;
\draw [shift={(251.5,139)}, rotate = 450] [color={rgb, 255:red, 0; green, 0; blue, 0 }  ][fill={rgb, 255:red, 0; green, 0; blue, 0 }  ][line width=0.75]    (10.93,-3.29) .. controls (6.95,-1.4) and (3.31,-0.3) .. (0,0) .. controls (3.31,0.3) and (6.95,1.4) .. (10.93,3.29)   ;
\draw    (358.5,184) .. controls (356.83,182.33) and (356.83,180.67) .. (358.5,179) .. controls (360.17,177.33) and (360.17,175.67) .. (358.5,174) .. controls (356.83,172.33) and (356.83,170.67) .. (358.5,169) .. controls (360.17,167.33) and (360.17,165.67) .. (358.5,164) .. controls (356.83,162.33) and (356.83,160.67) .. (358.5,159) .. controls (360.17,157.33) and (360.17,155.67) .. (358.5,154) -- (358.5,150) -- (358.5,142) ;
\draw [shift={(358.5,140)}, rotate = 450] [color={rgb, 255:red, 0; green, 0; blue, 0 }  ][fill={rgb, 255:red, 0; green, 0; blue, 0 }  ][line width=0.75]    (10.93,-3.29) .. controls (6.95,-1.4) and (3.31,-0.3) .. (0,0) .. controls (3.31,0.3) and (6.95,1.4) .. (10.93,3.29)   ;
\draw    (450.5,184) .. controls (448.83,182.33) and (448.83,180.67) .. (450.5,179) .. controls (452.17,177.33) and (452.17,175.67) .. (450.5,174) .. controls (448.83,172.33) and (448.83,170.67) .. (450.5,169) .. controls (452.17,167.33) and (452.17,165.67) .. (450.5,164) .. controls (448.83,162.33) and (448.83,160.67) .. (450.5,159) .. controls (452.17,157.33) and (452.17,155.67) .. (450.5,154) -- (450.5,150) -- (450.5,142) ;
\draw [shift={(450.5,140)}, rotate = 450] [color={rgb, 255:red, 0; green, 0; blue, 0 }  ][fill={rgb, 255:red, 0; green, 0; blue, 0 }  ][line width=0.75]    (10.93,-3.29) .. controls (6.95,-1.4) and (3.31,-0.3) .. (0,0) .. controls (3.31,0.3) and (6.95,1.4) .. (10.93,3.29)   ;
\draw    (252,128) ;
\draw [shift={(252,128)}, rotate = 0] [color={rgb, 255:red, 0; green, 0; blue, 0 }  ][fill={rgb, 255:red, 0; green, 0; blue, 0 }  ][line width=0.75]      (0, 0) circle [x radius= 3.35, y radius= 3.35]   ;
\draw    (358,128) ;
\draw [shift={(358,128)}, rotate = 0] [color={rgb, 255:red, 0; green, 0; blue, 0 }  ][line width=0.75]      (0, 0) circle [x radius= 3.35, y radius= 3.35]   ;
\draw    (358.5,115) .. controls (356.83,113.33) and (356.83,111.67) .. (358.5,110) .. controls (360.17,108.33) and (360.17,106.67) .. (358.5,105) .. controls (356.83,103.33) and (356.83,101.67) .. (358.5,100) .. controls (360.17,98.33) and (360.17,96.67) .. (358.5,95) .. controls (356.83,93.33) and (356.83,91.67) .. (358.5,90) .. controls (360.17,88.33) and (360.17,86.67) .. (358.5,85) .. controls (356.83,83.33) and (356.83,81.67) .. (358.5,80) -- (358.5,79) -- (358.5,71) ;
\draw [shift={(358.5,69)}, rotate = 450] [color={rgb, 255:red, 0; green, 0; blue, 0 }  ][fill={rgb, 255:red, 0; green, 0; blue, 0 }  ][line width=0.75]    (10.93,-3.29) .. controls (6.95,-1.4) and (3.31,-0.3) .. (0,0) .. controls (3.31,0.3) and (6.95,1.4) .. (10.93,3.29)   ;
\draw    (359,58) ;
\draw [shift={(359,58)}, rotate = 0] [color={rgb, 255:red, 0; green, 0; blue, 0 }  ][fill={rgb, 255:red, 0; green, 0; blue, 0 }  ][line width=0.75]      (0, 0) circle [x radius= 3.35, y radius= 3.35]   ;
\draw    (451,128) ;
\draw [shift={(451,128)}, rotate = 0] [color={rgb, 255:red, 0; green, 0; blue, 0 }  ][line width=0.75]      (0, 0) circle [x radius= 3.35, y radius= 3.35]   ;
\draw    (451.5,115) .. controls (449.83,113.33) and (449.83,111.67) .. (451.5,110) .. controls (453.17,108.33) and (453.17,106.67) .. (451.5,105) .. controls (449.83,103.33) and (449.83,101.67) .. (451.5,100) .. controls (453.17,98.33) and (453.17,96.67) .. (451.5,95) .. controls (449.83,93.33) and (449.83,91.67) .. (451.5,90) .. controls (453.17,88.33) and (453.17,86.67) .. (451.5,85) .. controls (449.83,83.33) and (449.83,81.67) .. (451.5,80) -- (451.5,79) -- (451.5,71) ;
\draw [shift={(451.5,69)}, rotate = 450] [color={rgb, 255:red, 0; green, 0; blue, 0 }  ][fill={rgb, 255:red, 0; green, 0; blue, 0 }  ][line width=0.75]    (10.93,-3.29) .. controls (6.95,-1.4) and (3.31,-0.3) .. (0,0) .. controls (3.31,0.3) and (6.95,1.4) .. (10.93,3.29)   ;
\draw    (451,58) ;
\draw [shift={(451,58)}, rotate = 0] [color={rgb, 255:red, 0; green, 0; blue, 0 }  ][fill={rgb, 255:red, 0; green, 0; blue, 0 }  ][line width=0.75]      (0, 0) circle [x radius= 3.35, y radius= 3.35]   ;
\draw    (358.5,184) .. controls (358.07,181.68) and (359.02,180.31) .. (361.34,179.88) .. controls (363.66,179.46) and (364.61,178.09) .. (364.18,175.77) .. controls (363.75,173.45) and (364.7,172.08) .. (367.02,171.65) .. controls (369.33,171.22) and (370.28,169.85) .. (369.85,167.54) .. controls (369.42,165.22) and (370.37,163.85) .. (372.69,163.42) .. controls (375.01,162.99) and (375.96,161.62) .. (375.53,159.3) .. controls (375.1,156.98) and (376.05,155.61) .. (378.37,155.19) .. controls (380.69,154.76) and (381.64,153.39) .. (381.21,151.07) .. controls (380.78,148.75) and (381.73,147.38) .. (384.05,146.96) .. controls (386.37,146.53) and (387.32,145.16) .. (386.89,142.84) .. controls (386.46,140.52) and (387.41,139.15) .. (389.73,138.72) .. controls (392.04,138.29) and (392.99,136.92) .. (392.56,134.61) .. controls (392.13,132.29) and (393.08,130.92) .. (395.4,130.49) .. controls (397.72,130.07) and (398.67,128.7) .. (398.24,126.38) .. controls (397.81,124.06) and (398.76,122.69) .. (401.08,122.26) .. controls (403.4,121.83) and (404.35,120.46) .. (403.92,118.14) .. controls (403.49,115.82) and (404.44,114.45) .. (406.76,114.03) .. controls (409.08,113.6) and (410.03,112.23) .. (409.6,109.91) .. controls (409.17,107.59) and (410.11,106.22) .. (412.43,105.79) -- (412.82,105.23) -- (417.36,98.65) ;
\draw [shift={(418.5,97)}, rotate = 484.59] [color={rgb, 255:red, 0; green, 0; blue, 0 }  ][fill={rgb, 255:red, 0; green, 0; blue, 0 }  ][line width=0.75]    (10.93,-3.29) .. controls (6.95,-1.4) and (3.31,-0.3) .. (0,0) .. controls (3.31,0.3) and (6.95,1.4) .. (10.93,3.29)   ;
\draw    (450.5,184) .. controls (450.03,181.69) and (450.96,180.3) .. (453.27,179.84) .. controls (455.58,179.38) and (456.51,177.99) .. (456.05,175.68) .. controls (455.58,173.37) and (456.51,171.98) .. (458.82,171.52) .. controls (461.13,171.06) and (462.06,169.67) .. (461.59,167.36) .. controls (461.13,165.05) and (462.06,163.66) .. (464.37,163.2) .. controls (466.68,162.74) and (467.61,161.35) .. (467.14,159.04) .. controls (466.67,156.73) and (467.6,155.34) .. (469.91,154.88) .. controls (472.22,154.42) and (473.15,153.03) .. (472.69,150.72) .. controls (472.22,148.41) and (473.15,147.02) .. (475.46,146.56) .. controls (477.77,146.1) and (478.7,144.71) .. (478.24,142.4) .. controls (477.77,140.09) and (478.7,138.7) .. (481.01,138.24) .. controls (483.32,137.78) and (484.25,136.39) .. (483.78,134.08) .. controls (483.32,131.77) and (484.25,130.38) .. (486.56,129.92) .. controls (488.87,129.46) and (489.8,128.07) .. (489.33,125.76) .. controls (488.86,123.45) and (489.79,122.06) .. (492.1,121.6) .. controls (494.41,121.14) and (495.34,119.75) .. (494.88,117.44) .. controls (494.41,115.13) and (495.34,113.74) .. (497.65,113.28) .. controls (499.96,112.82) and (500.89,111.43) .. (500.42,109.12) -- (502.95,105.32) -- (507.39,98.66) ;
\draw [shift={(508.5,97)}, rotate = 483.69] [color={rgb, 255:red, 0; green, 0; blue, 0 }  ][fill={rgb, 255:red, 0; green, 0; blue, 0 }  ][line width=0.75]    (10.93,-3.29) .. controls (6.95,-1.4) and (3.31,-0.3) .. (0,0) .. controls (3.31,0.3) and (6.95,1.4) .. (10.93,3.29)   ;
\draw  [dash pattern={on 0.84pt off 2.51pt}]  (377.5,200) -- (432.5,200) ;
\draw  [dash pattern={on 0.84pt off 2.51pt}]  (372.5,57) -- (437.5,57) ;
\draw  [dash pattern={on 0.84pt off 2.51pt}]  (372.5,128) -- (437.5,128) ;
\draw    (427,87) ;
\draw [shift={(427,87)}, rotate = 0] [color={rgb, 255:red, 0; green, 0; blue, 0 }  ][fill={rgb, 255:red, 0; green, 0; blue, 0 }  ][line width=0.75]      (0, 0) circle [x radius= 3.35, y radius= 3.35]   ;
\draw    (517,87) ;
\draw [shift={(517,87)}, rotate = 0] [color={rgb, 255:red, 0; green, 0; blue, 0 }  ][fill={rgb, 255:red, 0; green, 0; blue, 0 }  ][line width=0.75]      (0, 0) circle [x radius= 3.35, y radius= 3.35]   ;
\draw  [dash pattern={on 0.84pt off 2.51pt}]  (440.5,86) -- (505.5,86) ;
\draw    (274,128) -- (334.5,128) ;
\draw [shift={(336.5,128)}, rotate = 180] [color={rgb, 255:red, 0; green, 0; blue, 0 }  ][fill={rgb, 255:red, 0; green, 0; blue, 0 }  ][line width=0.75]    (10.93,-3.29) .. controls (6.95,-1.4) and (3.31,-0.3) .. (0,0) .. controls (3.31,0.3) and (6.95,1.4) .. (10.93,3.29)   ;
\draw    (524.5,81) .. controls (541.25,64.26) and (541.5,128.04) .. (520.48,94.6) ;
\draw [shift={(519.5,93)}, rotate = 419.26] [color={rgb, 255:red, 0; green, 0; blue, 0 }  ][fill={rgb, 255:red, 0; green, 0; blue, 0 }  ][line width=0.75]    (10.93,-3.29) .. controls (6.95,-1.4) and (3.31,-0.3) .. (0,0) .. controls (3.31,0.3) and (6.95,1.4) .. (10.93,3.29)   ;
\draw    (460.5,51) .. controls (477.25,34.25) and (477.5,98.04) .. (456.48,64.6) ;
\draw [shift={(455.5,63)}, rotate = 419.26] [color={rgb, 255:red, 0; green, 0; blue, 0 }  ][fill={rgb, 255:red, 0; green, 0; blue, 0 }  ][fill={rgb, 255:red, 0; green, 0; blue, 0 }  ][line width=0.75]    (10.93,-3.29) .. controls (6.95,-1.4) and (3.31,-0.3) .. (0,0) .. controls (3.31,0.3) and (6.95,1.4) .. (10.93,3.29)   ;
\draw    (459.5,121) .. controls (476.25,104.26) and (476.5,168.04) .. (455.48,134.6) ;
\draw [shift={(454.5,133)}, rotate = 419.26] [color={rgb, 255:red, 0; green, 0; blue, 0 }  ][fill={rgb, 255:red, 0; green, 0; blue, 0 }  ][line width=0.75]    (10.93,-3.29) .. controls (6.95,-1.4) and (3.31,-0.3) .. (0,0) .. controls (3.31,0.3) and (6.95,1.4) .. (10.93,3.29)   ;
\draw    (275,200) -- (335.5,200) ;
\draw [shift={(337.5,200)}, rotate = 180] [color={rgb, 255:red, 0; green, 0; blue, 0 }  ][fill={rgb, 255:red, 0; green, 0; blue, 0 }  ][line width=0.75]    (10.93,-3.29) .. controls (6.95,-1.4) and (3.31,-0.3) .. (0,0) .. controls (3.31,0.3) and (6.95,1.4) .. (10.93,3.29)   ;

\draw (247,192.4) node [anchor=north west][inner sep=0.75pt]    {$r_{1}$};
\draw (353,191.4) node [anchor=north west][inner sep=0.75pt]    {$r_{2}$};
\draw (443,190.4) node [anchor=north west][inner sep=0.75pt]    {$r_{I}$};
\draw (141,192.4) node [anchor=north west][inner sep=0.75pt]    {$r_{0}$};
\draw (192,182.4) node [anchor=north west][inner sep=0.75pt]    {$f_{\mathfrak S }$};
\draw (234.5,173.5) node [anchor=north west][inner sep=0.75pt]  [rotate=-270]  {$\sqsubset _{\mathfrak S }$};
\end{tikzpicture}
}
\vspace{-2mm}
\caption[story]{An example of a $\mathbf{K4D}$-story. The squiggly arrows represent the relation $\sqsubset_{\mathfrak  S}$ while the straight  arrows represent the function $f_{\mathfrak  S}$. Black points are reflexive with respect to $\sqsubset_{\mathfrak  S}$.
Each vertical slice represents a $\mathbf{K4D}$-moment.}
\label{fig:story}
\end{figure}
We often omit the index $\mathfrak m$ or $\gog$ when this does not lead to confusion.
We may also assign different notations to the components of a moment, so that if we write $\mathfrak m= \langle W,\sqsubset,\nu,x\rangle $, it is understood that $W=|\mathfrak m|$, ${\sqsubset  }= {\sqsubset  _\mathfrak m}$, etc.

\ignore{
\begin{definition}(\emph{story}) We define a story with duration $I$ as a model
$$ {\gog}=\langle |{\gog}|,{\prec},{f},{\nu}\rangle,$$
where $|{\gog}|\neq\varnothing$ is a finite set of points, ${\prec}$ is a weakly transitive and monotonic relation, $C^\prec(w) $ denotes the $\prec$ cluster of $w$, the reflexive closure of $\prec$ is denoted by $\preceq$, and
\begin{enumerate}
\item $|{\gog}|=\bigsqcup_{i\leq I}|{\gog}|_i$, where each $|{\gog}|_i$ is open and has a cluster-root, i.e.\ there is $C^-(w)\subseteq |{\gog}|_i$ such that $w
\prec v$ for all $w\in C^-(w)$ and for all $v\in |{\gog}|_i$ where $v\neq w$. The cluster-root of $|{\gog}|_0$ is called the cluster-root of the story, or simply the root of the story;
\item ${f}:|{\gog}|\rightarrow|{\gog}|$ is a monotonic function such that for all $i<I$, $f|{\gog}|_i\subseteq |{\gog}|_{i+1}$ and $f\upharpoonright |{\gog}|_I$ is the identity map;
\item ${\nu}$ is an evaluation function assigning to each propositional variable a subset of $|{\gog}|$;
\item $x \prec y$ implies that $x,y\in|{\gog}|_i$ for some $i\leq I$;
\item  If $x\prec f(y)$ then there exists $z \preceq y$ such that $f(z)\in C^\prec(x)$.

 
\end{enumerate}
\end{definition}
}

The finitary accessibility relation $\sqsubset  _{\Phi}$ will allow us to weaken the conditions on $p$-morphisms and still obtain maps that preserve the truth of (some) formulas.

\begin{definition}[$\Phi$-morphism]\label{phimorstatic}
Fix a logic $\Lambda$. Let $\mathfrak M^\Lambda_{\rm c} = \langle W_{\rm c},\sqsubset_{\rm c},g_{\rm c},\nu_{\rm c}\rangle$ and let $\mathfrak m$ be a moment.
A map ${\pi}:|\moment |\rightarrow W_{\rm c}$ is called a \emph{$\Phi$-morphism} if for all $x\in |\moment|$ the following conditions are satisfied:
\begin{enumerate}
\item \label{itAtoms} $x\in {\nu_\moment} (p)  \iff p \in {\pi}(x)$;

\item \label{itForth} If $ x \sqsubset  _\moment y$ then $\pi(x) \sqsubset  _{\rm c}  \pi(y)$;

\item\label{itBack} If ${\pi}(x)\sqsubset  _{\Phi}v$ for some $v\in W_{\rm c}$, then there exists $y\in|\moment|$ such that
$x \sqsubset  _\moment y  \text{ and } v={\pi}(y)$.

\end{enumerate}
We say that $\pi$ is {\em distinguished} if whenever $x$ is reflexive, then either $\pi(x)$ is $\theta$-final for some $\theta \in \mathcal{L}_{\dd\dtan}^{\circ }$ (not necessarily in $\Phi$), or else there is $y\equiv_\moment x$ such that $\pi(y)\neq \pi(x)$.
\end{definition}

We also need a dynamic variant of a $\Phi$-morphism which takes the transition function into account.

\begin{definition}[dynamic $\Phi$-morphism]\label{phimor}
Fix a logic $\Lambda$. Let $\mathfrak M^\Lambda_{\rm c} = \langle W_{\rm c},\sqsubset_{\rm c},g_{\rm c},\nu_{\rm c}\rangle$ and let $\mathfrak S$ be a story of duration $I$.
A map ${\pi}:|{\gog}|\rightarrow W_{\rm c}$ is called a \emph{dynamic $\Phi$-morphism} if for all $i\leq I$, $\pi\upharpoonright |\gog_i|$ is a $\Phi$-morphism and if $x\in|\gog_i|$ for some $i<I$, then $g_{\rm c}( {\pi}(x))={\pi}( f_{\gog}(x))$. 

We say that $\pi$ is a \emph{distinguished dynamic $\Phi$-morphism} if each $\pi\upharpoonright |\gog_i|$ is distinguished and for every reflexive $y\in |\gog_i|$, either $y$ is $\theta$-final for some $\theta$, or $i>0$ and $y\in f_\gog[|\gog_{i-1}|]$.
\end{definition}

We now show that a dynamic $\Phi$-morphism $\pi$ preserves the truth of formulas of suitable $\bc$-depth.
In order to prove this, we need the following witnessing lemma.

\begin{lemma}[witnessing lemma]
\label{lemWit}
Let $\mathfrak S$ be a weak story of duration $I$ and let $x \in |\mathfrak S_i|$ be with $i\leq I$. 
Let $\Phi$ be closed under subformulas and single negations and let $\pi$ be a dynamic $\Phi$-morphism into the canonical model of some normal logic $\Lambda$ extending ${\bf K4C}^\infty$, where either $\Lambda $ and $\mathfrak S$ are immersive or $\pi$ is distinguished.
Then,
\begin{enumerate}

\item\label{itDD} If $\dd\varphi\in \Phi$ then $\dd\varphi\in \pi(x)$ if and only if there is $y\sqsupset _\gog x$ such that  $\varphi\in \pi(y)$;

\item\label{itBC} If $\varphi$ is a formula (not necessarily in $\Phi$), $i<I$ and $x\in |\gog|$, then $\bc\varphi\in \pi(x)$ if and only $\varphi\in \pi(f_\gog(x))$;

\item\label{itTan} If $\Psi=\{\psi_1,\ldots,\psi_n\}$ then
\begin{itemize}

\item\label{itTanOne} If $\dtan\Psi\in \Phi \cap \pi(x)$ then there are reflexive points $y_1,\ldots,y_n$ such that $x\sqsubseteq_\gog y_1\equiv_\gog y_2 \equiv_\gog\ldots\equiv_\gog y_n$ and $\psi_i \in \pi(y_i)$.

\item\label{itTanTwo} If there are reflexive points $y_1,\ldots,y_n$ such that $x\sqsubseteq_\gog y_1\equiv_\gog y_2 \equiv_\gog\ldots\equiv_\gog y_n$ and $\psi_i \in \pi(y_i)$, then $\dtan\Psi\in \pi(x)$; note that $\dtan\Psi\in \Phi$ is not required.
\end{itemize}
\end{enumerate}
\end{lemma}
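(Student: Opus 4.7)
My strategy splits by item, with the third being the crux.

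For item \ref{itDD}, I combine Lemma \ref{lemmPhiConds}(\ref{condThree}) with the $\Phi$-morphism conditions of Definition~\ref{phimorstatic}. Forward: if $\dd\varphi \in \pi(x) \cap \Phi$, then (\ref{condThree}) yields some $v$ with $\pi(x) \sqsubset_\Phi v$ and $\varphi \in v$, so the back condition produces $y \sqsupset_\gog x$ with $\pi(y) = v$. Backward: $y \sqsupset_\gog x$ and the forth condition give $\pi(x) \sqsubset_{\rm c} \pi(y)$, and then $\varphi \in \pi(y)$ forces $\dd\varphi \in \pi(x)$ by maximal consistency. Item \ref{itBC} is immediate: $\bc\varphi \in \pi(x)$ iff $\varphi \in g_{\rm c}(\pi(x))$ by the definition of $g_{\rm c}$, and the dynamic $\Phi$-morphism condition $g_{\rm c} \circ \pi = \pi \circ f_\gog$ on $|\gog_i|$ for $i<I$ rewrites this as $\varphi \in \pi(f_\gog(x))$.

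For the backward direction of item \ref{itTan} I argue syntactically via ${\rm Ind}_{\dd^\infty}$. Extend $\Phi$ to a finite subformula-closed $\Phi' \supseteq \Phi \cup \Psi \cup \{\dtan\Psi\}$ together with the auxiliary $\dd$-formulas used below, and for each $i$ let $\chi_i$ be the atomic $\Phi'$-type of $\pi(y_i)$; set $\theta := \bigvee_{i=1}^n \chi_i$. Reflexivity of each $y_i$ together with $y_i \equiv_\gog y_j$ routes all $\pi(y_i)$ into a common reflexive cluster of $W_{\rm c}$ via the forth condition, so $\psi_j \wedge \chi_j \in \pi(y_j)$ entails $\dd(\psi_j \wedge \chi_j) \in \pi(y_i)$ for every $i, j$. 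Maximal consistency then gives $\vdash \chi_i \to \dd(\psi_j \wedge \theta)$, hence $\vdash \theta \to \bigwedge_{\psi \in \Psi} \dd(\psi \wedge \theta)$; applying ${\rm Ind}_{\dd^\infty}$ yields $\vdash \theta \to \dtan\Psi$, and since $\chi_1 \in \pi(y_1)$ together with $\pi(x) \sqsubseteq_{\rm c} \pi(y_1)$ propagate $\theta$ down to $\pi(x)$, we conclude $\dtan\Psi \in \pi(x)$.

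For the forward direction of item \ref{itTan} I iterate the back condition. Start with $z_0 := x$. At each stage $j$ with $\dtan\Psi \in \pi(z_j) \cap \Phi$, Lemma \ref{lemmPhiConds}(\ref{condNew}) produces witnesses $\{u_{\psi_k}^{(j)}\}_{k=1}^n$ lying in a common reflexive cluster of $W_{\rm c}$ with $\pi(z_j) \sqsubseteq_\Phi u_{\psi_k}^{(j)}$ and $\psi_k, \dtan\Psi \in u_{\psi_k}^{(j)}$. The back condition (trivialised to $z_{j,k} := z_j$ whenever $\pi(z_j) = u_{\psi_k}^{(j)}$) yields $z_{j,k}$ with $z_j \sqsubseteq_\gog z_{j,k}$ and $\pi(z_{j,k}) = u_{\psi_k}^{(j)}$. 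If every $z_{j,k} \equiv_\gog z_j$, I halt; otherwise I take some strict extension $z_{j+1}$ and iterate. Since the moment of $\gog$ containing $x$ is a finite tree, the $\sqsubset_\gog$-chain must terminate at some $z_J$, at which point setting $y_k := z_{J,k}$ puts all witnesses inside $C(z_J)$.

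The main obstacle is securing reflexivity of each $y_k$. When $|C(z_J)| > 1$ this is automatic from transitivity of $\sqsubset_\gog$; the delicate case is $C(z_J) = \{z_J\}$, where termination forces $u_{\psi_k}^{(J)} = \pi(z_J)$ for every $k$ and hence $\pi(z_J)$ to be reflexive in $W_{\rm c}$. I rule out an irreflexive $z_J$ using the hypothesis that either $\pi$ is distinguished or $\mathfrak S$ is immersive. In the distinguished case, reflexivity of $\pi(z_J)$ combined with the dynamic distinguishedness condition and the $\theta$-finality bookkeeping built into $\sqsubset_\Phi$ (cf.~the proof of Lemma~\ref{lemmPhiConds}) contradicts termination at an irreflexive, non-$f_\gog$-image $z_J$ with $\dtan\Psi \in \pi(z_J)$. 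In the immersive case, strict monotonicity and injectivity of each $f_i$ together with root- and cluster-preservation propagate reflexivity of $\pi(z_J)$ backwards through $f_\gog$ into the story, forcing $z_J$ itself to be reflexive. Either way the $y_k$ are reflexive, completing the construction.
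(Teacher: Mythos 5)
Your treatment of items \ref{itDD} and \ref{itBC} matches the paper's proof. The gap is in your argument for the second half of item \ref{itTan} (reflexive witnesses imply $\dtan\Psi\in\pi(x)$), which is the crux of the lemma. The step ``maximal consistency then gives $\vdash\chi_i\to\dd(\psi_j\wedge\theta)$'' is a non sequitur: from $\dd(\psi_j\wedge\theta)\in\pi(y_i)$ and the fact that $\chi_i$ is the $\Phi'$-type of $\pi(y_i)$ you only learn that \emph{this particular} MCS contains the formula; another MCS with the same finite type need not, so no theorem of $\Lambda$ is obtained. (There is also a circularity: $\theta$ is built from the $\Phi'$-types while $\Phi'$ is supposed to already contain $\dd(\psi_j\wedge\theta)$.) Even the weaker premise that ${\rm Ind}_{\dd^\infty}$ actually requires, namely $\boxdot\big(\theta\to\bigwedge_{j}\dd(\psi_j\wedge\theta)\big)\in\pi(x)$, cannot be secured this way: it demands control over \emph{every} $\sqsubset_{\rm c}$-successor of $\pi(x)$ satisfying $\theta$, and a type-defined $\theta$ does not confine such successors to the cluster of the $\pi(y_i)$. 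That is exactly what $\theta$-finality provides, and it is why the hypothesis that $\pi$ is distinguished or $\Lambda,\gog$ are immersive is indispensable --- your proof of this direction never invokes it, which is itself a warning sign: a reflexive point of the canonical model containing $\psi$ need not contain $\dtan\{\psi\}$; this is precisely the failure of the truth lemma in the presence of tangle that the whole construction is designed to circumvent.

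The paper's argument for this direction is genuinely dynamic: it proceeds by induction on the temporal index $i$ of the moment containing $x$. If $\pi(y_1)$ is $\theta$-final, finality makes the boxed premise of ${\rm Ind}_{\dd^\infty}$ (with $\diamonddot\theta$ as the invariant) verifiable and the conclusion follows. Otherwise, distinguishedness forces $y_1$ to be an $f_\gog$-image of a point $z$ in the previous moment; one pulls the witnesses back through $f_\gog$ using cluster-preservation (padding $\Psi$ with $\chi,\neg\chi$ obtained from two cluster points with distinct $\pi$-images), applies the induction hypothesis at $z$, and pushes forward with ${\rm CTan}_\diamonddot$ and Lemma \ref{lemmTanProp}. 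None of this machinery appears in your proposal. Your iteration argument for the first half of item \ref{itTan} is also more elaborate than the paper's, which obtains the witnesses directly from Lemma \ref{lemmPhiConds}\eqref{condNew} together with the back condition, and its termination and reflexivity bookkeeping is left vague; but the essential defect is the one above.
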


\begin{proof}\
\noindent \eqref{itDD} If $\dd\varphi\notin\pi(x)$ then $ \dn \neg \varphi \in\pi(x)$.
Then, if $y\sqsupset_\mathfrak S x$, it follows that $\pi(y)\sqsupseteq_\mathrm c \pi(x)$, so $\neg \varphi \in \pi(y)$ by definition of $\sqsupseteq_\mathrm c$.

If $\dd\varphi\in\pi(x)$, then there exists $v\in W_{\rm c}$ such that $\pi(x) \sqsubset  _{\Phi}  v$ and $\varphi\in v$. By the definition of a dynamic $\Phi$-morphism there exists $v'\in|{\gog}|$ such that $x \sqsubset_\gog v'$ and $v= \pi(v')$, as needed.


\noindent \eqref{itBC} If $\bc\varphi\in\pi(x)$ then $\varphi\in g_{\rm c}(\pi(x))$. By the definition of a dynamic $\Phi$-morphism we get $g_{\rm c}(\pi(x)) = \pi(f_\gog (x))$, and by definition of $g_{\rm c}$, $\varphi\in g_{\rm c}(\pi(x))$, as needed.
The converse implication is obtained by observing that $\neg\bc\varphi$ is equivalent to $\bc\neg\varphi$ and applying the same argument.

\noindent \eqref{itTan} If $\dtan\Psi\in \pi(x) $ then Lemma \ref{finalset} yields the needed witnesses for $x\in \|\dtan\Psi\|_\gog $.
For the converse, we will consider the case where $\Lambda\not\vdash{\rm Cont}_\dd$.
Recall that $x\in |\gog_i|$ for some $i$; we proceed by induction on $i$ to show that if there are $y_1,\ldots,y_n$ such that $x\sqsubseteq_\gog y_1\equiv_\gog y_2 \equiv_\gog\ldots\equiv_\gog y_n$ and $\psi_j \in \pi(y_j)$, then $\dtan\Psi \in \pi(x)$.

Let $y=y_1$.
Since $y\sqsupseteq_\gog x$, it suffices to show that $\dtan\Psi\in \pi(y)$.
Consider the following cases.
If $y$ is $\theta$-final for some $\theta$, observe that every $z\equiv_\gog y$ is $\diamonddot\theta$-final.
It is readily verified that $\boxdot(\diamonddot\theta\to \bigwedge_{\psi\in \Phi} \dd(\psi\wedge\diamonddot\theta)) \in \pi(x)$, hence $\dd \theta\to \dtan\Psi\in \pi(x)$ by Axiom ${\rm Ind}_{\ctan}$, and thus by Modus Ponens, $\dtan\Psi\in \pi(x) $.
If $y$ is reflexive and there is $z$ such that $y=f_\gog(z)$, then by cluster-preservation $C_\gog(y) = f_\gog[C_\gog(z)]$.
Since $\gog$ is distinguished, there is $y'\neq y''$ in $C_\gog(y)$ such that $\pi(y')\neq \pi(y'') $, hence there is a formula $\chi$ such that $\chi\in \pi(y')$ and $\neg\chi\in\pi(y'')$.
We may set $\psi_{n+1}\equiv\chi$, $\psi_{n+2}\equiv\neg\chi$, $y_{n+1} :=y'$ and $y_{n+2} :=y''$.
Letting $z_j \in C_\gog(z)$ be so that $y_j = f_\gog(z_j)$, we readily observe that $z_1\equiv_\gog z_2\equiv_\gog\ldots\equiv_\gog z_{n+2}$ and $\bc\psi_j\in \pi(z_j)$.
Note that $z\in |\gog_{i-1}|$, so we may apply the induction hypothesis to obtain $\dtan\bc(\Psi\cup\{\chi,\neg\chi\})\in \pi(z)$.
By Axiom ${\rm CTan}_\diamonddot$, $\bc\ctan(\Psi\cup\{\chi,\neg\chi\})\in \pi(z)$, hence $\ctan(\Psi\cup\{\chi,\neg\chi\})\in \pi(y)$.
But Lemma~\ref{lemmTanProp} yields $\dtan \Psi \in \pi(y)$, as needed.
Finally, the case where $y $ is irreflexive is impossible by assumption.

The case where $\Lambda$ is immersive is similar, but simplified since we do not need the detour through $\ctan$.
\end{proof}

From this, we easily obtain the following truth preservation lemma.

\begin{lemma}[truth preservation]
Let $\mathfrak S$ be a story of duration $I$ and let $x \in |\mathfrak S_0|$.
Let $\pi$ be a dynamic $\Phi$-morphism into the canonical model of some normal logic $\Lambda$ extending $\mathbf{K4C^\infty}$.
Suppose that $\varphi \in \Phi$ is a formula of $\bc$-depth at most $I$, and either $\Lambda$ and $\gog$ are immersive or $\pi$ is distinguished. Then, $\varphi\in\pi(x)$ iff $x\in \| \varphi\|_\mathfrak S$.
\end{lemma}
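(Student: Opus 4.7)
The plan is to prove, by induction on $\varphi$, the following strengthened statement: for every $j\leq I$, every $x\in|\gog_j|$, and every $\varphi\in\Phi$ whose $\bc$-depth is at most $I-j$, we have $\varphi\in\pi(x)$ if and only if $x\in\|\varphi\|_\gog$. The lemma then follows by setting $j=0$. Without loss of generality I may also assume $\Phi$ is closed under single negations, since enlarging $\Phi$ leaves every dynamic $\Phi$-morphism in place (the finitary relation $\sqsubset_\Phi$ from Lemma~\ref{lemmPhiConds} only acquires more successors) while only strengthening the hypothesis of the witnessing lemma. The atomic case is clause~\eqref{itAtoms} of Definition~\ref{phimorstatic}; the Boolean cases follow from maximal consistency of $\pi(x)$ and closure of $\Phi$ under negation and conjunction.

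For $\varphi\equiv\dd\psi$, clause~\eqref{itDD} of the witnessing lemma gives $\dd\psi\in\pi(x)$ iff there exists $y\sqsupset_\gog x$ with $\psi\in\pi(y)$. Since $\sqsubset_\gog$ preserves the moment index $j$ and $\psi$ retains $\bc$-depth at most $I-j$, the induction hypothesis replaces $\psi\in\pi(y)$ by $y\in\|\psi\|_\gog$, and the equivalence with $x\in\rho_\gog(\|\psi\|_\gog)=\|\dd\psi\|_\gog$ follows directly from the interpretation of $\dd$ on the derivative frame $\gog$.

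For $\varphi\equiv\bc\psi$, the bound on the $\bc$-depth forces $j<I$, so clause~\eqref{itBC} applies and produces $\bc\psi\in\pi(x)$ iff $\psi\in\pi(f_\gog(x))$. Since $f_\gog(x)\in|\gog_{j+1}|$ and $\psi$ has $\bc$-depth at most $I-(j+1)$, the induction hypothesis at level $j+1$ rewrites the right-hand side as $f_\gog(x)\in\|\psi\|_\gog$, which is exactly $x\in\|\bc\psi\|_\gog$. For $\varphi\equiv\dtan\Psi$, clause~\eqref{itTan} supplies both directions: from $\dtan\Psi\in\pi(x)$ one obtains reflexive witnesses $y_1,\ldots,y_n$ in a common cluster with $x\sqsubseteq_\gog y_1$ and $\psi_k\in\pi(y_k)$, which the induction hypothesis transports to $y_k\in\|\psi_k\|_\gog$; the characterisation of $\rho^\infty$ on finite derivative frames recalled in Section~\ref{secPrel} then yields $x\in\|\dtan\Psi\|_\gog$. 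Conversely, given $x\in\|\dtan\Psi\|_\gog$ the same characterisation extracts a reflexive cluster $\sqsupseteq_\gog x$ meeting every $\|\psi_k\|_\gog$, the induction hypothesis converts the selected members into $\pi$-preimages satisfying the $\psi_k$, and the second half of clause~\eqref{itTan} concludes $\dtan\Psi\in\pi(x)$.

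The main subtlety, and the reason the induction must be strengthened, is the interplay between the moment index $j$ and the $\bc$-depth: without tracking both simultaneously, the $\bc$ step has no level at which to recurse. All the genuinely delicate content — in particular the coupling between the tangled derivative, the transition function, and the choice between the distinguished and immersive hypotheses — has already been absorbed into the witnessing lemma, so the present argument is essentially bookkeeping on top of it.
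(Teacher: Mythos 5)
Your proposal is correct and follows essentially the same route as the paper, which proves exactly the same strengthened claim (tracking the moment index $j$ against the remaining $\bc$-depth $I-j$) by induction on the complexity of $\varphi$, with each case discharged by the witnessing lemma; you have merely written out the case analysis the paper leaves implicit. One small quibble: your ``without loss of generality'' justification for closing $\Phi$ under single negations is backwards --- enlarging $\Phi$ makes $\sqsubset_\Phi$ acquire more successors, which \emph{strengthens} the back condition in Definition~\ref{phimorstatic} and so does not automatically preserve the property of being a dynamic $\Phi$-morphism; the intended reading is simply that $\Phi$ is assumed closed under subformulas and single negations from the outset, as in the witnessing lemma.
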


\begin{proof}
We must prove the more general claim that if $\varphi$ has $\bc$-depth at most $I-j$ and $x\in |\mathfrak S_j|$, then $\varphi\in\pi(x)$ iff $x\in \| \varphi\|_\mathfrak S$.
The proof proceeds by induction on the complexity of $\varphi$, with each step being immediate from Lemma \ref{lemWit}.
\end{proof}

We will next demonstrate that for every point $w$ in the canonical model, there exists a suitable moment $\mathfrak m$ and a  $\Phi$-morphism mapping $\mathfrak m$ to $w$. 
In order to do this, we define a procedure for constructing new moments from smaller ones.

\begin{definition}[moment construction]
Fix $\Lambda\in \{\mathbf{K4} ,\mathbf{K4D}\}$. Let $\{x\}\subseteq {C}' \subseteq C(x )$ for some $x  $ in the canonical model $\mathfrak{M}^\Lambda_\mathrm c$ with $C'$ finite, and ${\rm q} \in \{{\rm i},{\rm r}\}$ (for `irreflexive' or `reflexive').
Let $\vec{\mathfrak{a}}=\lb \mathfrak{a}_m\rb_{m<N}$ be a sequence of moments.
We define a structure $\noment =  {{\vec{\mathfrak{a}}}\choose {C'}}^{\rm q}_x$, where $\noment=\langle |\noment|,\sqsubset  _\noment,\nu_\noment, r_\noment\rangle$ as follows:
\begin{enumerate}
\item  $|\noment| = C'  \sqcup \bigsqcup_{m<N}| \mathfrak{a}_m | ;$

\item $y\sqsubset  _\noment z $ if either
\begin{itemize}
\item $y,z \in C' $ and ${\rm q} ={\rm r}$,

\item  $y \in C' $ and $z \in |\mathfrak{a}_m| $ for some $m$, or

\item $y,z\in|\mathfrak{a}_m|$ and $y\sqsubset  _{\mathfrak{a}_m} z$ for some $m$;
\end{itemize}
\item $\nu_\noment (p)=\{x\in C':  x\in \nu_{\rm c}(p)\} 
\sqcup\bigsqcup_{m<N}
\nu_{\mathfrak{a}_m}(p)$;

\item $r_\noment = x$.
\end{enumerate}
\end{definition}

The moment construction for some logic $\Lambda$ can be used to produce $\Lambda$-moments.

\begin{lemma}\label{story}
Let $ {C}' \subseteq C(x )$ for some $x $ in the canonical model $\mathfrak{M}^\Lambda_\mathrm c$ and let ${\rm q} \in \{{\rm i},{\rm r}\}$, where either ${\rm q} = {\rm r}$ or $C'$ is a singleton.
Let $\vec{\mathfrak{a}}=\lb \mathfrak{a}_i\rb_{i<N}$ be a sequence of $\Lambda$-moments.
Then, $\moment:={\vec{\mathfrak{a}}\choose C}^{\rm q}_x$ is a $\bf K4$-moment, and if each $\mathfrak{a}_i$ is serial and either $N>0$ or $C$ is reflexive, then it is a $\bf K4D$-moment.
\end{lemma}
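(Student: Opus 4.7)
The plan is to verify each component of the definition of a $\Lambda$-moment by unfolding the three clauses in the definition of $\sqsubset_\noment$ and doing a case analysis. Finiteness is immediate: $C'$ is finite by the standing assumption on the construction, and each $|\mathfrak a_m|$ is finite because $\mathfrak a_m$ is a moment, so $|\noment|$ is a finite disjoint union. The valuation $\nu_\noment$ is well-defined on this disjoint union.

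For transitivity, suppose $y\sqsubset_\noment z\sqsubset_\noment w$. I would split on which of the three clauses witnesses $y\sqsubset_\noment z$. If both $y,z\in C'$ (so $\mathrm q=\mathrm r$), then $z\sqsubset_\noment w$ is witnessed either by clause~1 (giving $y,w\in C'$ with $\mathrm q=\mathrm r$, hence clause~1 applies to $y,w$) or by clause~2 (giving $w\in|\mathfrak a_m|$, hence clause~2 applies to $y,w$); clause~3 cannot apply because $z\in C'$. If $y\in C'$ and $z\in|\mathfrak a_m|$, then $z\sqsubset_\noment w$ must come from clause~3, so $w\in|\mathfrak a_m|$ and $y\sqsubset_\noment w$ by clause~2. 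If $y,z\in|\mathfrak a_m|$ with $y\sqsubset_{\mathfrak a_m}z$, then again $z\sqsubset_\noment w$ forces $w\in|\mathfrak a_m|$ with $z\sqsubset_{\mathfrak a_m}w$, and transitivity of $\sqsubset_{\mathfrak a_m}$ finishes the case.

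For tree-likeness, assume $a\sqsubseteq_\noment c$ and $b\sqsubseteq_\noment c$. If $c\in C'$, then the only clauses that can place something $\sqsubseteq_\noment$-below $c$ are equality or clause~1, so $a,b\in C'$; comparability then follows since either $\mathrm q=\mathrm r$ (in which case clause~1 gives $a\sqsubset_\noment b$) or $C'=\{x\}$ (so $a=b$). If $c\in|\mathfrak a_m|$, the available clauses force $a,b\in C'\cup|\mathfrak a_m|$; the only non-trivial subcase is $a,b\in|\mathfrak a_m|$, which is handled by tree-likeness of $\mathfrak a_m$, while any other distribution yields $a\sqsubseteq_\noment b$ or $b\sqsubseteq_\noment a$ via clause~2. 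That $x$ is the root is immediate: from $x\in C'$, every element of $C'\setminus\{x\}$ is reached via clause~1 (which requires $\mathrm q=\mathrm r$, guaranteed when $C'$ is not a singleton), and every element of any $|\mathfrak a_m|$ via clause~2.

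Finally, for the $\mathbf{K4D}$ part I would verify seriality of $\sqsubset_\noment$. Points in $|\mathfrak a_m|$ inherit a successor from seriality of $\mathfrak a_m$ (composed with clause~3). Points in $C'$ need a successor in $\noment$: if $N>0$, any element of $|\mathfrak a_0|$ works via clause~2; otherwise $N=0$ and by hypothesis $C'$ is reflexive, i.e.\ $\mathrm q=\mathrm r$, and clause~1 yields a self-loop. The main obstacle, and really the only nontrivial bookkeeping, is the tree-likeness step, because it is the only place where the condition ``$\mathrm q=\mathrm r$ or $C'$ is a singleton'' is genuinely used; the rest of the argument is a straightforward inspection of the three clauses defining $\sqsubset_\noment$.
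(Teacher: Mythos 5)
Your proof is correct and follows essentially the same route as the paper's: a direct verification of transitivity, seriality, and the tree/root structure by inspecting the clauses of the moment construction. You are somewhat more thorough than the paper (which dispatches transitivity in one sentence and does not explicitly check tree-likeness), but there is no difference in method.
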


\begin{proof}
The relation $\sqsubset _\moment $ is easily seen to be transitive since each $\mathfrak{a}_i$ is weakly transitive and the root sees all other points.
Similarly, any point of $\mathfrak a_i$ has a successor if $\mathfrak a_i$ is serial, and a root point will have a successor in any $\mathfrak a_i$ or be its own successor if $C$ is reflexive.
 \end{proof}

\ignore{
\begin{figure}[H]
\captionsetup{width=0.45\textwidth}
\centering
\scalebox{0.63}{%
\tikzset{every picture/.style={line width=0.75pt}} 
\begin{tikzpicture}[x=0.75pt,y=0.75pt,yscale=-1,xscale=1]

\draw    (129,261) -- (192.5,260.03) ;
\draw [shift={(194.5,260)}, rotate = 539.13] [color={rgb, 255:red, 0; green, 0; blue, 0 }  ][line width=0.75]    (10.93,-3.29) .. controls (6.95,-1.4) and (3.31,-0.3) .. (0,0) .. controls (3.31,0.3) and (6.95,1.4) .. (10.93,3.29)   ;
\draw    (230.5,262) -- (294,261.03) ;
\draw [shift={(296,261)}, rotate = 539.13] [color={rgb, 255:red, 0; green, 0; blue, 0 }  ][line width=0.75]    (10.93,-3.29) .. controls (6.95,-1.4) and (3.31,-0.3) .. (0,0) .. controls (3.31,0.3) and (6.95,1.4) .. (10.93,3.29)   ;
\draw    (211,242) .. controls (209.35,240.32) and (209.37,238.65) .. (211.05,237) .. controls (212.73,235.35) and (212.75,233.68) .. (211.1,232) .. controls (209.45,230.32) and (209.46,228.65) .. (211.14,227) .. controls (212.82,225.35) and (212.84,223.68) .. (211.19,222) .. controls (209.54,220.32) and (209.56,218.65) .. (211.24,217) .. controls (212.92,215.35) and (212.94,213.68) .. (211.29,212) .. controls (209.64,210.32) and (209.66,208.65) .. (211.34,207) .. controls (213.02,205.35) and (213.03,203.68) .. (211.38,202) -- (211.4,200) -- (211.48,192) ;
\draw [shift={(211.5,190)}, rotate = 450.55] [color={rgb, 255:red, 0; green, 0; blue, 0 }  ][line width=0.75]    (10.93,-3.29) .. controls (6.95,-1.4) and (3.31,-0.3) .. (0,0) .. controls (3.31,0.3) and (6.95,1.4) .. (10.93,3.29)   ;
\draw    (313,242) .. controls (311.32,240.35) and (311.31,238.68) .. (312.96,237) .. controls (314.61,235.32) and (314.59,233.65) .. (312.91,232) .. controls (311.23,230.35) and (311.22,228.68) .. (312.87,227) .. controls (314.52,225.32) and (314.5,223.65) .. (312.82,222) .. controls (311.14,220.35) and (311.13,218.68) .. (312.78,217) .. controls (314.43,215.32) and (314.41,213.65) .. (312.73,212) .. controls (311.05,210.35) and (311.04,208.68) .. (312.69,207) .. controls (314.34,205.32) and (314.32,203.65) .. (312.64,202) .. controls (310.96,200.35) and (310.95,198.68) .. (312.6,197) -- (312.59,196) -- (312.52,188) ;
\draw [shift={(312.5,186)}, rotate = 449.49] [color={rgb, 255:red, 0; green, 0; blue, 0 }  ][line width=0.75]    (10.93,-3.29) .. controls (6.95,-1.4) and (3.31,-0.3) .. (0,0) .. controls (3.31,0.3) and (6.95,1.4) .. (10.93,3.29)   ;
\draw    (313,169) .. controls (311.32,167.35) and (311.3,165.68) .. (312.95,164) .. controls (314.6,162.32) and (314.59,160.65) .. (312.91,159) .. controls (311.23,157.35) and (311.21,155.68) .. (312.86,154) .. controls (314.51,152.32) and (314.49,150.65) .. (312.81,149) .. controls (311.13,147.35) and (311.11,145.68) .. (312.76,144) .. controls (314.41,142.32) and (314.4,140.65) .. (312.72,139) .. controls (311.04,137.35) and (311.02,135.68) .. (312.67,134) .. controls (314.32,132.32) and (314.3,130.65) .. (312.62,129) -- (312.59,126) -- (312.52,118) ;
\draw [shift={(312.5,116)}, rotate = 449.46] [color={rgb, 255:red, 0; green, 0; blue, 0 }  ][line width=0.75]    (10.93,-3.29) .. controls (6.95,-1.4) and (3.31,-0.3) .. (0,0) .. controls (3.31,0.3) and (6.95,1.4) .. (10.93,3.29)   ;
\draw    (323,239) .. controls (322.48,236.7) and (323.37,235.29) .. (325.67,234.77) .. controls (327.97,234.26) and (328.86,232.85) .. (328.35,230.55) .. controls (327.83,228.25) and (328.72,226.84) .. (331.02,226.32) .. controls (333.32,225.81) and (334.21,224.4) .. (333.69,222.1) .. controls (333.17,219.8) and (334.06,218.39) .. (336.36,217.87) .. controls (338.66,217.36) and (339.55,215.95) .. (339.04,213.65) .. controls (338.52,211.35) and (339.41,209.94) .. (341.71,209.42) .. controls (344.01,208.9) and (344.9,207.49) .. (344.38,205.19) .. controls (343.87,202.89) and (344.76,201.48) .. (347.06,200.97) .. controls (349.36,200.45) and (350.25,199.04) .. (349.73,196.74) .. controls (349.21,194.44) and (350.1,193.03) .. (352.4,192.52) .. controls (354.7,192) and (355.59,190.59) .. (355.07,188.29) .. controls (354.56,185.99) and (355.45,184.58) .. (357.75,184.07) .. controls (360.05,183.55) and (360.94,182.14) .. (360.42,179.84) .. controls (359.9,177.54) and (360.79,176.13) .. (363.09,175.62) .. controls (365.39,175.1) and (366.28,173.69) .. (365.77,171.39) .. controls (365.25,169.09) and (366.14,167.68) .. (368.44,167.16) -- (370.15,164.45) -- (374.43,157.69) ;
\draw [shift={(375.5,156)}, rotate = 482.31] [color={rgb, 255:red, 0; green, 0; blue, 0 }  ][line width=0.75]    (10.93,-3.29) .. controls (6.95,-1.4) and (3.31,-0.3) .. (0,0) .. controls (3.31,0.3) and (6.95,1.4) .. (10.93,3.29)   ;
\draw    (381,147) ;
\draw [shift={(381,147)}, rotate = 0] [color={rgb, 255:red, 0; green, 0; blue, 0 }  ][fill={rgb, 255:red, 0; green, 0; blue, 0 }  ][line width=0.75]      (0, 0) circle [x radius= 3.35, y radius= 3.35]   ;
\draw    (313,179) ;
\draw [shift={(313,179)}, rotate = 0] [color={rgb, 255:red, 0; green, 0; blue, 0 }  ][line width=0.75]      (0, 0) circle [x radius= 3.35, y radius= 3.35]   ;
\draw  [dash pattern={on 0.84pt off 2.51pt}]  (395,147) -- (520.5,147) ;
\draw    (475,244) .. controls (474.47,241.71) and (475.35,240.29) .. (477.64,239.75) .. controls (479.93,239.22) and (480.81,237.8) .. (480.28,235.51) .. controls (479.75,233.22) and (480.63,231.8) .. (482.92,231.26) .. controls (485.21,230.73) and (486.09,229.31) .. (485.56,227.02) .. controls (485.03,224.72) and (485.91,223.3) .. (488.21,222.77) .. controls (490.5,222.24) and (491.38,220.82) .. (490.85,218.53) .. controls (490.32,216.24) and (491.2,214.82) .. (493.49,214.28) .. controls (495.78,213.75) and (496.66,212.33) .. (496.13,210.04) .. controls (495.6,207.75) and (496.48,206.33) .. (498.77,205.79) .. controls (501.06,205.25) and (501.94,203.83) .. (501.41,201.54) .. controls (500.88,199.25) and (501.76,197.83) .. (504.05,197.3) .. controls (506.34,196.76) and (507.22,195.34) .. (506.69,193.05) .. controls (506.16,190.76) and (507.04,189.34) .. (509.33,188.81) .. controls (511.63,188.28) and (512.51,186.86) .. (511.98,184.56) .. controls (511.45,182.27) and (512.33,180.85) .. (514.62,180.32) .. controls (516.91,179.78) and (517.79,178.36) .. (517.26,176.07) .. controls (516.73,173.78) and (517.61,172.36) .. (519.9,171.83) .. controls (522.19,171.29) and (523.07,169.87) .. (522.54,167.58) -- (523.22,166.49) -- (527.44,159.7) ;
\draw [shift={(528.5,158)}, rotate = 481.89] [color={rgb, 255:red, 0; green, 0; blue, 0 }  ][line width=0.75]    (10.93,-3.29) .. controls (6.95,-1.4) and (3.31,-0.3) .. (0,0) .. controls (3.31,0.3) and (6.95,1.4) .. (10.93,3.29)   ;
\draw    (533,149) ;
\draw [shift={(533,149)}, rotate = 0] [color={rgb, 255:red, 0; green, 0; blue, 0 }  ][fill={rgb, 255:red, 0; green, 0; blue, 0 }  ][line width=0.75]      (0, 0) circle [x radius= 3.35, y radius= 3.35]   ;
\draw    (466,241) .. controls (464.32,239.35) and (464.31,237.68) .. (465.96,236) .. controls (467.61,234.32) and (467.59,232.65) .. (465.91,231) .. controls (464.23,229.35) and (464.22,227.68) .. (465.87,226) .. controls (467.52,224.32) and (467.5,222.65) .. (465.82,221) .. controls (464.14,219.35) and (464.13,217.68) .. (465.78,216) .. controls (467.43,214.32) and (467.41,212.65) .. (465.73,211) .. controls (464.05,209.35) and (464.04,207.68) .. (465.69,206) .. controls (467.34,204.32) and (467.32,202.65) .. (465.64,201) .. controls (463.96,199.35) and (463.95,197.68) .. (465.6,196) -- (465.59,195) -- (465.52,187) ;
\draw [shift={(465.5,185)}, rotate = 449.49] [color={rgb, 255:red, 0; green, 0; blue, 0 }  ][line width=0.75]    (10.93,-3.29) .. controls (6.95,-1.4) and (3.31,-0.3) .. (0,0) .. controls (3.31,0.3) and (6.95,1.4) .. (10.93,3.29)   ;
\draw    (466,168) .. controls (464.32,166.35) and (464.3,164.68) .. (465.95,163) .. controls (467.6,161.32) and (467.59,159.65) .. (465.91,158) .. controls (464.23,156.35) and (464.21,154.68) .. (465.86,153) .. controls (467.51,151.32) and (467.49,149.65) .. (465.81,148) .. controls (464.13,146.35) and (464.11,144.68) .. (465.76,143) .. controls (467.41,141.32) and (467.4,139.65) .. (465.72,138) .. controls (464.04,136.35) and (464.02,134.68) .. (465.67,133) .. controls (467.32,131.32) and (467.3,129.65) .. (465.62,128) -- (465.59,125) -- (465.52,117) ;
\draw [shift={(465.5,115)}, rotate = 449.46] [color={rgb, 255:red, 0; green, 0; blue, 0 }  ][line width=0.75]    (10.93,-3.29) .. controls (6.95,-1.4) and (3.31,-0.3) .. (0,0) .. controls (3.31,0.3) and (6.95,1.4) .. (10.93,3.29)   ;
\draw    (466,178) ;
\draw [shift={(466,178)}, rotate = 0] [color={rgb, 255:red, 0; green, 0; blue, 0 }  ][line width=0.75]      (0, 0) circle [x radius= 3.35, y radius= 3.35]   ;
\draw    (312,108) ;
\draw [shift={(312,108)}, rotate = 0] [color={rgb, 255:red, 0; green, 0; blue, 0 }  ][line width=0.75]      (0, 0) circle [x radius= 3.35, y radius= 3.35]   ;
\draw    (466,107) ;
\draw [shift={(466,107)}, rotate = 0] [color={rgb, 255:red, 0; green, 0; blue, 0 }  ][line width=0.75]      (0, 0) circle [x radius= 3.35, y radius= 3.35]   ;
\draw  [dash pattern={on 0.84pt off 2.51pt}]  (324.5,107) -- (452.5,106) ;
\draw    (223.5,181) -- (298,180.03) ;
\draw [shift={(300,180)}, rotate = 539.25] [color={rgb, 255:red, 0; green, 0; blue, 0 }  ][line width=0.75]    (10.93,-3.29) .. controls (6.95,-1.4) and (3.31,-0.3) .. (0,0) .. controls (3.31,0.3) and (6.95,1.4) .. (10.93,3.29)   ;
\draw    (212,181) ;
\draw [shift={(212,181)}, rotate = 0] [color={rgb, 255:red, 0; green, 0; blue, 0 }  ][line width=0.75]      (0, 0) circle [x radius= 3.35, y radius= 3.35]   ;
\draw  [dash pattern={on 0.84pt off 2.51pt}]  (330,260) -- (449.5,259) ;
\draw  [dash pattern={on 0.84pt off 2.51pt}]  (326.5,179) -- (454.5,178) ;
\draw    (540.5,143) .. controls (556.26,129.21) and (563.29,188.19) .. (540.56,157.47) ;
\draw [shift={(539.5,156)}, rotate = 414.78] [color={rgb, 255:red, 0; green, 0; blue, 0 }  ][line width=0.75]    (10.93,-3.29) .. controls (6.95,-1.4) and (3.31,-0.3) .. (0,0) .. controls (3.31,0.3) and (6.95,1.4) .. (10.93,3.29)   ;
\draw    (474.5,171) .. controls (490.26,157.21) and (497.29,216.19) .. (474.56,185.47) ;
\draw [shift={(473.5,184)}, rotate = 414.78] [color={rgb, 255:red, 0; green, 0; blue, 0 }  ][line width=0.75]    (10.93,-3.29) .. controls (6.95,-1.4) and (3.31,-0.3) .. (0,0) .. controls (3.31,0.3) and (6.95,1.4) .. (10.93,3.29)   ;
\draw    (473.5,101) .. controls (489.26,87.21) and (496.29,146.19) .. (473.56,115.47) ;
\draw [shift={(472.5,114)}, rotate = 414.78] [color={rgb, 255:red, 0; green, 0; blue, 0 }  ][line width=0.75]    (10.93,-3.29) .. controls (6.95,-1.4) and (3.31,-0.3) .. (0,0) .. controls (3.31,0.3) and (6.95,1.4) .. (10.93,3.29)   ;
\draw    (480.5,255) .. controls (496.26,241.21) and (503.29,300.19) .. (480.56,269.47) ;
\draw [shift={(479.5,268)}, rotate = 414.78] [color={rgb, 255:red, 0; green, 0; blue, 0 }  ][line width=0.75]    (10.93,-3.29) .. controls (6.95,-1.4) and (3.31,-0.3) .. (0,0) .. controls (3.31,0.3) and (6.95,1.4) .. (10.93,3.29)   ;

\draw (103,252) node [anchor=north west][inner sep=0.75pt]    {$C'_{0}$};
\draw (203,251) node [anchor=north west][inner sep=0.75pt]    {$C'_{1}$};
\draw (457,250) node [anchor=north west][inner sep=0.75pt]    {$C'_{n}$};
\draw (303,250) node [anchor=north west][inner sep=0.75pt]    {$C'_{2}$};
\end{tikzpicture}
}
\vspace{-2mm}
\caption[story]{A story. The squiggly arrows represent the $\prec$ relation while the straight arrows represent the function $f$. The $\bc$-depth of this story is $n+1$, while the $\prec$-depth is of degree of at most two.}
\end{figure}

}

 We use this moment construction to show that every point in the canonical model is the $\Phi$-morphic image of some moment. 
 Below, we denote by $C_\Phi(w)$ the $\sqsubset  _{\Phi}$-cluster of $w$, i.e.\
$$C_\Phi(w)=\{w\}\cup \{v:w\sqsubset  _{\Phi}v \sqsubset  _{\Phi} w\}.$$

\begin{lemma}\label{lemmExistsMoment}
Fix $\Lambda \in \{\mathbf{K4 },\mathbf{K4D} \}$. 
Given a finite set of formulas $\Phi$ with $\dd\top\in \Phi$, for all $w\in W_\mathrm c$ there exists a $\Lambda$-moment $\moment$ and a distinguished $\Phi$-morphism $\pi\colon |\moment|\to W_\mathrm c$ such that $\pi(r_\moment) = w$.
\end{lemma}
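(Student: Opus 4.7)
I would proceed by well-founded induction, recursively building $\mathfrak m$ from smaller moments via Lemma~\ref{story}. The induction measure is the depth $d(w)<\omega$ of the tree of injective $\sqsubset_\Phi^0$-chains emanating from $w$; this is well-defined by the K\"onig-style argument inside the proof of Lemma~\ref{lemmPhiConds}. Since a tangled formula $\dtan\Gamma\in w$ demands an entire \emph{cluster} of witnesses, it is natural to prove the following strengthened statement by induction: for every $w\in W_{\rm c}$, every finite $C'\subseteq C(w)$ containing $w$ and closed under those $\sqsubset_\Phi^0$-successors of its elements that lie in $C(w)$, and every ${\rm q}\in\{{\rm i},{\rm r}\}$ (with ${\rm q}={\rm i}$ only when $w$ is $\sqsubset_{\rm c}$-irreflexive and $C'=\{w\}$), there exists a $\Lambda$-moment $\mathfrak m$ with root cluster $C'$ and a distinguished $\Phi$-morphism $\pi\colon|\mathfrak m|\to W_{\rm c}$ whose restriction to $C'$ is the inclusion. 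The lemma then follows from an appropriate choice of $(C',{\rm q})$.

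At the inductive step, I gather all $\sqsubset_\Phi^0$-successors of elements of $C'$ that lie in $\sqsubset_{\rm c}$-clusters strictly above $C(w)$, and group them by cluster. Unpacking the definition of $\sqsubset_\Phi^0$ from the proof of Lemma~\ref{lemmPhiConds}, these arise either as the single points $w(\varphi,C(w))$ for $\dd\varphi\in u\cap\Phi$ with $u\in C'$, or as the reflexive clusters $\{v(\psi,\dtan\Gamma,C(w)):\psi\in\Gamma\}$ supplied by Lemma~\ref{lemmPhiConds}(iv) for $\dtan\Gamma\in u\cap\Phi$ with $u\in C'$. Every such cluster $D$ strictly above $C(w)$ has $d(v)<d(w)$ for all $v\in D$, so the induction hypothesis yields a sub-moment $\mathfrak a_D$ whose root cluster is a finite subset of $D$ containing the required witnesses. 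I then set $\mathfrak m:={\vec{\mathfrak{a}}\choose C'}^{\rm q}_w$ via Lemma~\ref{story}, and define $\pi$ as the inclusion on $C'$ and as the IH-provided $\pi_D$ on each $|\mathfrak a_D|$. The three clauses of Definition~\ref{phimorstatic} are then routine: atom preservation is immediate from $\nu_\mathfrak m$; the forth condition follows from $\sqsubset_\Phi\subseteq\sqsubset_{\rm c}$ together with the cluster structure of $C'$ and transitivity; and the back condition reduces any outgoing $\sqsubset_\Phi$-edge to a $\sqsubset_\Phi^0$-step that lands either inside $C'$ (by closure) or at the root of some $\mathfrak a_D$, followed by a $\sqsubset_\Phi$-step handled by the induction hypothesis on $\mathfrak a_D$.

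To secure distinguishedness at the root, I fix $(C',{\rm q})$ as follows: if $w$ is irreflexive, take $C'=\{w\}$ and ${\rm q}={\rm i}$; otherwise set ${\rm q}={\rm r}$ and keep $C'=\{w\}$ whenever $w$ is $\theta$-final for some $\theta$ (noting that if $C(w)=\{w\}$ then $w$ is automatically $\top$-final), and otherwise enlarge $C'$ by any $w'\in C(w)\setminus\{w\}$ to obtain distinct $\pi$-images inside the root cluster. Distinguishedness at non-root points is inherited from the sub-moments by the induction hypothesis. The main technical obstacle is precisely the tangled case: the witness cluster $\{v(\psi,\dtan\Gamma,C(w)):\psi\in\Gamma\}$ must be preserved as a reflexive sub-cluster of $\mathfrak m$, which is the sole reason for strengthening the statement to allow arbitrary finite root clusters $C'$ and for relying on the cluster structure built into the moment construction of Lemma~\ref{story}.
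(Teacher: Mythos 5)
Your overall architecture matches the paper's: a well-founded recursion along the finitary relation of Lemma~\ref{lemmPhiConds}, assembling the moment via the ${{\vec{\mathfrak{a}}}\choose {C'}}^{\rm q}_w$ construction of Lemma~\ref{story} with $\pi$ acting as the identity on the root cluster and the induction hypothesis supplying the sub-moments. Your two deviations --- attaching sub-moments only at immediate $\sqsubset^0_\Phi$-successors and discharging longer $\sqsubset_\Phi$-edges through the sub-moments' back condition, and parametrising by an arbitrary finite $C'\subseteq C(w)$ rather than taking the $\sqsubset_\Phi$-cluster $C_\Phi(w)$ as the paper does --- are workable.

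There is, however, a genuine gap in your treatment of distinguishedness at the root. You assert that if $C(w)=\{w\}$ then $w$ is automatically $\top$-final. This is false: by Definition~\ref{finalset}, $\top$-finality of $w$ requires that \emph{every} $\sqsubset_{\rm c}$-successor of $w$ lie in $C(w)$, and a reflexive point forming a singleton cluster can perfectly well have successors in strictly higher clusters. For such a $w$ that is moreover not $\theta$-final for any $\theta$, your recipe forces ${\rm q}={\rm r}$ (you permit ${\rm q}={\rm i}$ only when $w$ is $\sqsubset_{\rm c}$-irreflexive), so the root comes out reflexive in $\moment$ with no cluster-mate of different image, and $\pi$ is \emph{not} distinguished. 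The paper's escape hatch is precisely the move you forbid: it sets ${\rm q}={\rm i}$ whenever $w$ is not final and $|C_\Phi(w)|=1$, i.e., it makes the root irreflexive \emph{in the moment} even when $w$ is reflexive in the canonical model. This is harmless for the back condition because every point in the range of $\sqsubset^0_\Phi$ (hence of $\sqsubset_\Phi$) is by construction $\theta$-final for some $\theta$, so a non-final $w$ never satisfies $w\sqsubset_\Phi w$ and no witness above the root mapping back to $w$ is ever demanded. You should drop the parenthetical claim and the restriction on ${\rm q}$, and adopt this case split. A second, smaller omission: you never use the hypothesis $\dd\top\in\Phi$, which is what guarantees that $\sqsubset_\Phi$ is serial when $\Lambda=\mathbf{K4D}$ and hence that the constructed moment is a $\mathbf{K4D}$-moment; without this the lemma's claim for $\mathbf{K4D}$ is not established.
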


\begin{proof}
We prove the stronger claim, that there is a moment $ \moment$ and a map $ \pi\colon | \moment| \to W_{\rm c} $ that is a $p$-morphism on the structure $(W_{\rm c},\sqsubset_\Phi)$ (we will say that $\pi$ is a {\em $p$-morphism with respect to $\sqsubset_\Phi$}).
Let $\sqsubset  _{\Phi}^1$ be the strict $\sqsubset  _{\Phi}$ successor, i.e.\ $w\sqsubset  _{\Phi}^1v$ iff $w \sqsubset  _{\Phi} v$ and $\neg (v \sqsubset  _{\Phi} w)$.
Since $\sqsubset  _{\Phi}^1$ is converse well-founded, we can assume inductively that for each $v$ such that $w \sqsubset  _{\Phi}^1 v$, there is a moment ${\moment_{v}}$ and a $p$-morphism $\pi_{v}:|{\moment_{v}}|\rightarrow W_{\rm c}$ with respect to $\sqsubset  _{\Phi}$ that maps the root of ${\moment_{v}}$ to $v$.
Accordingly, we define a moment
 $$ \moment = {\{{\moment}_v: v \sqsupset  _{\Phi}^1 w \}\choose C_\Phi(w) }^{\rm q}_w,$$
where ${\rm q} = \rm i$ unless $w$ is reflexive and $\Phi$-final or $|C_\Phi(w)|>1$, in which case ${\rm q}={\rm r}$.
Moreover, $C_\Phi(w)$ is finite, so $\moment$ is a $\bf K4$-moment by Lemma \ref{story}.

Next we define a map $ \pi:|\moment |\rightarrow W$ as
$$   \pi(x)=\begin{cases} x &\text{if }x \in C_\Phi(w), \\
\pi_v (x)&\text{if }x\in|\moment _v|. \end{cases}$$

We prove that $  \pi$ is a $p$-morphism for $\sqsubset_\Phi$.
First assume that $  \pi(x)\sqsubset  _{\Phi} v$; we must show that there is $y\sqsupset_\moment x$ so that $ \pi(y) = v $.
Either $x\in C_\Phi(w)$ or $x\in |\moment_u|$ for some $u$.
In the first case, we consider two sub-cases.
If $v\in C_\Phi(w)$ as well, then $ \pi(v) = v$ and we may set $y:=v$.
If not, $   \pi(x)\sqsubset  _{\Phi}^1 v$, and by letting $y$ be the root of $\moment_v$, we see that $ \pi(y) = v$.
If instead $x\in |\moment_u|$ for some $u$, then by assumption $\pi_u$ is a $p$-morphism, immediately yielding the desired $y$ (also satisfying $y \in |\moment_u|$).

Now, suppose $x \sqsubset  _\moment y$. We check that $  \pi(x)\sqsubset  _\Phi   \pi(y)$.
There are two cases to consider:
First suppose that $x\in | \moment_{u}|$ for some $u$ and $y\in |\moment_{v}|$ for some $v$. Then by the definition of the moment construction operation, $ u = v $. By the induction hypothesis, since $ \pi_{u}$ is a $p$-morphism, then $  \pi(x)\sqsubset  _\Phi  \pi(y)$.
Otherwise, suppose that $x\in C_\Phi(w)$.
If $y\in C_\Phi(w)$ we immediately obtain $\pi(x)\sqsubset  _\Phi  \pi(y)$, since $C_\Phi(w)$ is precisely the $\sqsubset  _\Phi$-cluster of $w$ (with the same accessibility relation).
Otherwise, $y\in |\moment_{v}|$ for some $v$, and we let $y'$ be the root of $\moment_v$, so that $ \pi(y') = v$.
Then,  $  \pi(x) \sqsubset  _{\Phi}  \pi(y')$.
If $y=y'$ we are done.
If $y\neq y'$, then since by the induction hypothesis $  \pi_{v}$ is a $p$-morphism with respect to $\sqsubset_\Phi$, then $y'  \sqsubset   y$ implies $  \pi(y') \sqsubset  _ \Phi \pi(y)$. By the  transitivity of $\sqsubset  _\Phi$ we have that $ \pi(x) \sqsubset  _\Phi  \pi(y)$.

Now, for $\Lambda $ extending $ \bf K4D$, observe that each $v$ in the canonical model has a successor satisfying $\top$.
Since $\dd\top\in \Phi$ by assumption and $\dd \top\in v$ by $\rm D$, $v$ has a $\sqsubset_\Phi$-successor satisfying $\top$, so $\sqsubset_\Phi$ is serial.
Since $\pi$ is a $p$-morphism with respect to $\sqsubset_\Phi$, it follows that $\sqsubset_\moment$ is serial as well.

Finally, it is easily checked that $\pi$ is distinguished, using the hypothesis that each $\pi_v$ was distinguished and our definition of $\pi$ and $\rm q$.
\end{proof}

Non-strict monotonicity will create a small technical problem in our proofs.
We wish to construct stories via a step-by-step method.
Suppose that $x\sqsubset x'$ have already been added to our model but $f(x)$ and $f(x')$ have not been defined.
We wish to add points $y $ and $y'$ so that we may set $f(x)=y$ and $f(x' ) = y'$.
However, with non-strict monotonicity alone, we cannot immediately guarantee that $\pi(y)\sqsubset \pi(y')$, as monotonicity only yields $\pi(y)\sqsubseteq \pi(y')$.
In the case that $\pi(y) = \pi(y')$, we may identify $y$ and $y'$ to deal with this issue.
Since we construct our frames top-down, in this case, $y$ and $y'$ will be {\em at the bottom.}

\begin{definition}[pre-$\Phi$-morphism, bottom]
Let $  \moment$ be a moment and $ \pi :  | \moment|\rightarrow W_c$.
We say that $x\in |\moment|$ is \emph{at the bottom} if  $\pi (x)$ is irreflexive and $\pi (x)=\pi (r_\moment)$.

We say that $ \pi:|\moment|\rightarrow W_c$ is a \emph{pre-$\Phi$-morphism} if it fulfils conditions \ref{itAtoms} and \ref{itBack} of a $\Phi$-morphism (Definition \ref{phimorstatic}), and
$ x\sqsubset  _\mathfrak m y $ implies that either $ \pi (x) \sqsubset  _{\rm c}  \pi(y) $ or $ x,y $ are at the bottom.
\end{definition}

\begin{lemma}\label{lemmPremor}
Fix $\Lambda\in\{\mathbf{K4},\mathbf{K4D}\}$. Let $\moment,\hat\noment$ be $\Lambda$-moments such that there is a monotonic, root-preserving, almost injective map $\hat f\colon |\moment| \to |\hat \moment|$ and a pre-$\Phi$-morphism $\hat \pi \colon |\hat \noment| \to W_{\rm c}$.
Then, there exist a $\Lambda$-moment $\noment$, a monotonic, root-preserving, almost injective map $f\colon |\moment|\to|\noment|$, and a $\Phi$-morphism $\pi\colon |\noment|\to W_{\rm c}$.
\end{lemma}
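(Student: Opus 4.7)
The strategy is to surgically eliminate the ``bottom'' subtree of $\hat\noment$ so that the exceptional clause in the definition of a pre-$\Phi$-morphism becomes vacuous. If $\hat\pi(r_{\hat\noment})$ is reflexive then no point of $\hat\noment$ is at the bottom, $\hat\pi$ is already a $\Phi$-morphism, and I set $\noment := \hat\noment$, $f := \hat f$, $\pi := \hat\pi$. So suppose $\hat\pi(r_{\hat\noment})$ is irreflexive, and let $B := \{x \in |\hat\noment| : \hat\pi(x) = \hat\pi(r_{\hat\noment})\}$ be the set of bottom points; note that $r_{\hat\noment} \in B$.

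The key preliminary claim is that $B$ is downward-closed in $\hat\noment$: if $x \sqsubset_{\hat\noment} y$ with $y \in B$ and $x \notin B$, then since $x$ is not at the bottom the pre-$\Phi$-morphism clause yields $\hat\pi(x) \sqsubset_c \hat\pi(r_{\hat\noment})$, while $x \neq r_{\hat\noment}$ combined with the root property yields $r_{\hat\noment} \sqsubset_{\hat\noment} x$ and hence $\hat\pi(r_{\hat\noment}) \sqsubset_c \hat\pi(x)$. Transitivity of $\sqsubset_c$ then contradicts the irreflexivity of $\hat\pi(r_{\hat\noment})$. A corollary is that any nontrivial cluster-mate of $r_{\hat\noment}$ in $\hat\noment$ lies in $B$.

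Now define $|\noment| := (|\hat\noment| \setminus B) \cup \{r_{\hat\noment}\}$, $r_\noment := r_{\hat\noment}$, $\sqsubset_\noment := \bigl(\sqsubset_{\hat\noment} \cap (|\noment| \times |\noment|)\bigr) \setminus \{(r_\noment, r_\noment)\}$, $\nu_\noment := \nu_{\hat\noment} \upharpoonright |\noment|$, and $\pi := \hat\pi \upharpoonright |\noment|$. Define $f \colon |\moment| \to |\noment|$ by setting $f(x) := \hat f(x)$ whenever $\hat f(x) \in |\noment|$, and $f(x) := r_\noment$ otherwise. I would then verify: (a) $\noment$ is a $\Lambda$-moment --- tree-likeness and transitivity are inherited from $\hat\noment$ by restriction, and deleting the loop at $r_\noment$ cannot break transitivity, since any chain $r_\noment \sqsubset_\noment y \sqsubset_\noment r_\noment$ would put $y$ in the cluster of $r_{\hat\noment}$ and hence in $B$, contradicting $y \in |\noment| \setminus \{r_\noment\}$; (b) $\pi$ is a $\Phi$-morphism --- the forth condition holds since no $\sqsubset_\noment$-edge has both endpoints at the bottom, and the back condition uses the downward closure of $B$ together with $\hat\pi(r_{\hat\noment}) \not\sqsubset_\Phi \hat\pi(r_{\hat\noment})$, which follows from $\sqsubset_\Phi \subseteq \sqsubset_c$ (Lemma~\ref{lemmPhiConds}); (c) $f$ is monotonic, root-preserving, and almost injective, the last relying on the fact that any new collisions produced by the collapse share the image $r_\noment$, which is irreflexive in $\noment$ by construction.

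The main obstacle is maintaining seriality in the $\mathbf{K4D}$ case. Downward closure shows that for any $x \in |\noment| \setminus \{r_\noment\}$, every $\sqsubset_{\hat\noment}$-successor of $x$ lies outside $B$ and hence in $|\noment|$; thus seriality can only fail at $r_\noment$, and only in the degenerate situation where $|\hat\noment| \subseteq B$. In that case I would attach to $r_\noment$ a single fresh successor subtree whose $\pi$-image extends a $\sqsubset_c$-successor of $\hat\pi(r_{\hat\noment})$ (which exists by axiom $\mathrm D$), produced via Lemma~\ref{lemmExistsMoment}.
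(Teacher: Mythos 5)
Your proposal is correct and matches the paper's approach, which simply defers to the quotient construction of the earlier CSL paper: identifying the bottom points into a single irreflexive copy of the root is exactly the quotient with the ``GL-bottom'' described there, and your downward-closure claim is the key fact making the collapse work. Your explicit patch for seriality in the $\mathbf{K4D}$ degenerate case is a detail the paper leaves implicit, but it is consistent with (and needed for) the stated result.
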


\begin{proof}
This is proven in \cite{CSLpaper}.
One basically takes a quotient, where the points at the bottom are identified.
The only difference is that we are using what in the other paper is the $\bf GL$-bottom, which only identifies copies of the root when it is irreflexive.
In this way, we ensure that $f$ remains injective on the reflexive points.
\end{proof}

Using this, we can prove the existence of an appropriate story that maps to the canonical model.
This is based on the following useful lemma.

\begin{lemma}\label{lemmExistsSuper}
\sloppy Fix \[\Lambda\in\{\mathbf{K4C}^\infty,\mathbf{K4DC}^\infty,\mathbf{K4I}^\infty,\mathbf{K4DI}^\infty\}.\]
Let ${\mathfrak M^\Lambda_{\rm c} = \langle W,\sqsubset,g,\nu\rangle}$ and let $\moment$ be a $\Lambda$-moment. 
Suppose that there exists a $\Phi$-morphism $\pi\colon |\moment|\to W$. 
Then, there exist a moment $ \noment$, a monotonic map $f \colon |\moment| \to | \noment |$, and a $\Phi$-morphism $\rho\colon|\noment | \to W $ such that
$g\circ \pi = \rho \circ f$, where $f$ is immersive whenever $\Lambda$ is immersive.
\end{lemma}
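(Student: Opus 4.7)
The plan is to construct an auxiliary $\Lambda$-moment $\hat\noment$ together with a monotonic, root-preserving, almost injective map $\hat f\colon|\moment|\to|\hat\noment|$ and a pre-$\Phi$-morphism $\hat\pi\colon|\hat\noment|\to W$ satisfying $\hat\pi\circ\hat f = g\circ\pi$, and then to invoke Lemma~\ref{lemmPremor} to turn $\hat\pi$ into an honest $\Phi$-morphism; this will immediately yield the required $\noment$, $f$ and $\rho$ with $g\circ\pi=\rho\circ f$.

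To build $(\hat\noment,\hat f,\hat\pi)$, I would proceed as follows. After enlarging $\Phi$ if necessary so that $\dd\top\in\Phi$, for each $x\in|\moment|$ Lemma~\ref{lemmExistsMoment} applied at the canonical point $g(\pi(x))$ supplies a $\Lambda$-moment $\mathfrak b_x$ together with a distinguished $\Phi$-morphism $\pi_x\colon|\mathfrak b_x|\to W$ with $\pi_x(r_{\mathfrak b_x})=g(\pi(x))$. I then assemble $\hat\noment$ by recursion on the tree-structure of $\moment$, working from the leaves down: for each cluster $C$ of $\moment$ I glue the roots of the moments $\mathfrak b_x$ for $x\in C$ into a single root cluster (which is legitimate because $x\equiv_\moment y$ gives $\pi(x)\equiv_{\rm c}\pi(y)$ and hence $g(\pi(x))\equiv_{\rm c} g(\pi(y))$ by monotonicity of $g$), and then attach to that root cluster, via the moment construction, the moments already built for the sub-clusters of $\moment$ immediately below $C$. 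The map $\hat f$ sends each $x$ to its copy of $r_{\mathfrak b_x}$ inside $\hat\noment$, and $\hat\pi$ is the union of the $\pi_x$. That $\hat\noment$ is a $\Lambda$-moment then follows from Lemma~\ref{story}, while the atomic and back conditions on $\hat\pi$ are inherited from those of each $\pi_x$, since the $\sqsubset_\Phi$-successors of $g(\pi(x))$ are already witnessed inside $\mathfrak b_x$.

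The main obstacle is the forth condition of the pre-$\Phi$-morphism in the non-immersive case, where $g$ is only monotonic: two distinct points of a $\moment$-cluster can be identified by $g\circ\pi$ to a single irreflexive canonical point, so that a strict relation $\hat f(x)\sqsubset_{\hat\noment}\hat f(y)$ fails to translate into $\sqsubset_{\rm c}$ on the image side. I would resolve this in the standard way by placing such collapsed pairs at the bottom of $\hat\noment$, so that the forth condition is discharged by the bottom exception built into the definition of pre-$\Phi$-morphism. In the immersive case this obstacle vanishes because $g$ is strictly monotonic: $\hat f$ can be taken strictly monotonic and injective, and $\hat\pi$ is already a genuine $\Phi$-morphism. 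A final application of Lemma~\ref{lemmPremor} quotients the bottom points and delivers the required $\noment$, $f$ and $\rho$; the identity $g\circ\pi=\rho\circ f$ is preserved because $\hat\pi$ was constant on each identified class, and in the immersive case $f$ remains strictly monotonic and injective, hence an immersion.
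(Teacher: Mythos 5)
Your overall strategy --- build an auxiliary $\hat\noment$ from the moments supplied by Lemma~\ref{lemmExistsMoment}, equip it with an almost injective monotonic $\hat f$ and a pre-$\Phi$-morphism, and quotient via Lemma~\ref{lemmPremor}, treating the immersive case separately --- is exactly the paper's. But one step fails as written: you assemble the \emph{entire} $\hat\noment$ first and then make ``a final application'' of Lemma~\ref{lemmPremor}. The bottom exception in the definition of a pre-$\Phi$-morphism only excuses a failure of the forth condition at points $z$ with $\hat\pi(z)$ irreflexive \emph{and equal to} $\hat\pi(r_{\hat\noment})$, i.e.\ to the image of the global root. A collapse $g(\pi(x))=g(\pi(y))$ with $x\sqsubset_\moment y$ can occur at any cluster of $\moment$; when it occurs away from the root cluster, the common image $u$ satisfies only $g(\pi(r_\moment))\sqsubseteq_{\rm c}u$, and in general $u\neq g(\pi(r_\moment))$. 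Such a pair is not at the bottom, so your $\hat\pi$ is simply not a pre-$\Phi$-morphism, and no single terminal quotient can repair this: being at the bottom is a property of the map, not something one can arrange by ``placing'' points there.

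The paper avoids this by taking the statement of the lemma itself as the induction hypothesis, inducting on the height of $\moment$: the generated submoments $\mathfrak a_n$ of the immediate strict successors of $r_\moment$ are handled by the IH, which already returns honest $\Phi$-morphisms $\rho_n$ (the quotient of Lemma~\ref{lemmPremor} having been applied at that level), and only the new root cluster $D$ is added underneath, together with fresh moments $\mathfrak b_v$ for the $\sqsubset_\Phi$-successors $v$ of $g(r_\moment)$ to secure the back condition. The only un-quotiented collapses then occur inside $D$ or between $D$ and the roots of the $\mathfrak a'_n$, where the common irreflexive image is forced, by $u\equiv_{\rm c}g(\pi(r_\moment))$ together with irreflexivity and transitivity, to equal $g(\pi(r_\moment))$; so the bottom exception genuinely applies and one application of Lemma~\ref{lemmPremor} per level suffices. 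Your construction goes through once the quotient is interleaved with the recursion in this way; the remaining differences (gluing a whole $\mathfrak b_x$ for every $x$ in the cluster rather than one $\mathfrak b_v$ per $\sqsubset_\Phi$-successor of the root, and leaving the reflexivity marker of the glued cluster unspecified) are bookkeeping.
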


\begin{proof}
We proceed by induction on the height of $\moment$.
Let $C$ be the cluster of $r_\moment$ and let $\vec {\mathfrak a} = \langle\mathfrak a_n\rangle_{n<N}$ be the generated sub-models of the immediate strict successors of $r_\moment$; note that each $\mathfrak a_n$ is itself a moment of smaller height.
By the induction hypothesis, there exist moments $\langle\mathfrak a'_n\rangle_{n<N}$, root-preserving, monotonic maps $f_n \colon |\mathfrak a_n | \to |\mathfrak a'_n|$, and $\Phi$-morphism $\rho_n \colon |\mathfrak a'_n| \to W$ such that  $g\circ \pi_n = \rho_n \circ f_n$.
Moreover, for each $v\sqsupset  _{\Phi} g(r_\moment) $, by Lemma \ref{lemmExistsMoment} there are $\mathfrak b_v$ and a $\Phi$-morphism $\rho_v \colon |\mathfrak b_v| \to W$ mapping the root of $\mathfrak b_v$ to $v$.
If $\Lambda$ is immersive or $ |g_{\rm c}[\pi[C(r_\moment)]]|>1 $, we set $D= \{(x,g_{\rm c}(\pi (x))) : x\in C(r_\moment)\}$ with $(x,y)$ and ${\rm q}: = {\rm r}$ iff $x$ is reflexive (otherwise ${\rm q} := {\rm i}$), $f_D(x) = (x,g_{\rm c}(\pi (x))$ for $x\in C(r_\moment)$, and $\rho_D (x,y): = y$.
Otherwise, $\Lambda$ is not immersive and $g_{\rm c}[\pi[C(r_\moment)]]$ is a singleton, say $v$, and we set $D=\{v\}$ with ${\rm q} ={\rm i}$, $f_D(x) = v$ for all $x\in C(r_\moment)$, and $\rho_D (v): = v$.
This way of defining $D$ will ensure that our map is cluster-preserving and almost injective.

Let $\hat \noment = {\vec{\mathfrak a} *\vec{\mathfrak b}\choose D}^{\rm q}_{g_{\rm c}(w)}$ and define
\[
\hat \rho(w)
=
\begin{cases}
\rho_D(w)  &\text{if $w \in D$},\\
\rho_n (w) & \text{if $w \in |\mathfrak a'_n|$},\\
\rho_v (w) & \text{if $w \in |\mathfrak b_v|$}.
\end{cases}\]
It is not hard to check that $\hat\rho$ is a pre-$\Phi$-morphism from $\hat\noment$ to $W$.
Then, set
\[ 
\hat f(w)
=
\begin{cases}
f_D &\text{if $w \in C(r_\moment)$},\\
f_n(w) & \text{if $w \in |\mathfrak a_n|$}.
\end{cases}
\]
it is easy to see that $\hat f\colon |\moment| \to |\hat\noment|$ is monotonic and satisfies $g \circ  \pi  = \hat \rho  \circ \hat f$.
If each $f_n$ is immersive as is $\Lambda$, then $\hat f$ is in fact strictly monotonic and injective and we are done.

Otherwise, setting $\noment =\quot {\hat\noment}{\hat \rho}$, $f=\quotpi {\hat f}$ and $\rho =\quotpi {\hat \rho}$, Lemma \ref{lemmPremor} implies that $\noment$, $f$ and $\rho$ have the desired properties.
\end{proof}

\begin{proposition}\label{propExistsStory}
\sloppy Fix  \[\Lambda\in\{\mathbf{K4C}^\infty,\mathbf{K4DC}^\infty,\mathbf{K4I}^\infty,\mathbf{K4DI}^\infty\}.\]
Given $I<\omega$ and $w\in W_{\rm c}$, there is a story $\gog$ of duration $I$ and a dynamic $\Phi$-morphism $\pi\colon |\gog| \to W_{\rm c}$ with $w =\pi(r_\gog)$, such that either $\Lambda$ and $\gog$ are immersive or $\pi$ is distinguished.
\end{proposition}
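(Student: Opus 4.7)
The plan is to construct $\gog$ by induction on $I$, starting from $w$ and building each moment $\gog_i$ (together with the transition map $f_i$) successively by appealing to the existence lemmas already proven.

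For the base case $I = 0$, I would apply Lemma~\ref{lemmExistsMoment} directly to $w$ and the set $\Phi$ (adding $\dd\top$ to $\Phi$ if necessary, or observing that $\dd\top \in w$ when $\Lambda$ extends $\mathbf{K4D}$ so that the enlargement is harmless) to obtain a $\Lambda$-moment $\gog_0$ together with a distinguished $\Phi$-morphism $\pi_0\colon |\gog_0| \to W_{\rm c}$ sending $r_{\gog_0}$ to $w$. The story structure is trivial here: $f_\gog$ is just the identity on $|\gog_0|$, and all the story conditions (root-preservation, almost injectivity, cluster-preservation, stabilisation) are satisfied vacuously or by virtue of being the identity.

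For the inductive step, suppose that $\gog_0,\ldots,\gog_i$ together with monotonic maps $f_0,\ldots,f_{i-1}$ and $\Phi$-morphisms $\pi_0,\ldots,\pi_i$ have already been constructed so that $g_{\rm c}\circ \pi_j = \pi_{j+1}\circ f_j$ for all $j<i$, and so that either all relevant data is immersive or each $\pi_j$ witnesses the appropriate distinguishedness. To produce $\gog_{i+1}$, $f_i$ and $\pi_{i+1}$, I apply Lemma~\ref{lemmExistsSuper} to the moment $\gog_i$ and the $\Phi$-morphism $\pi_i$. That lemma yields a moment $\noment = \gog_{i+1}$, a monotonic map $f_i\colon |\gog_i| \to |\gog_{i+1}|$, and a $\Phi$-morphism $\pi_{i+1}\colon |\gog_{i+1}| \to W_{\rm c}$ with $g_{\rm c}\circ \pi_i = \pi_{i+1}\circ f_i$, and moreover $f_i$ is strictly monotonic and injective (hence immersive) whenever $\Lambda$ is immersive. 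This already delivers root-preservation, cluster-preservation and almost injectivity of $f_i$, inspecting the cluster $D$ and the map $f_D$ built in the proof of Lemma~\ref{lemmExistsSuper}. After $I$ iterations, we set $\gog = \langle |\gog|, \sqsubset_\gog, f_\gog, \nu_\gog, r_\gog\rangle$ with $|\gog| = \bigsqcup_{i\le I}|\gog_i|$, $\sqsubset_\gog = \bigsqcup_{i\le I}\sqsubset_{\gog_i}$, $\nu_\gog = \bigsqcup_{i\le I}\nu_{\gog_i}$, $r_\gog = r_{\gog_0}$, and $f_\gog = {\rm Id}_{|\gog_I|}\cup\bigsqcup_{i<I}f_i$; stabilisation at the top slice is automatic from this definition.

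The verification that $\pi := \bigsqcup_{i\le I}\pi_i$ is a dynamic $\Phi$-morphism is then routine: each $\pi\upharpoonright|\gog_i|$ is a $\Phi$-morphism by construction, and the intertwining equation $g_{\rm c}\circ \pi_i = \pi_{i+1}\circ f_i$ from Lemma~\ref{lemmExistsSuper} is exactly the dynamic clause of Definition~\ref{phimor}. The step that requires most care — and where I expect the main obstacle to lie — is the bookkeeping for the non-immersive case, where we must ensure that $\pi$ is \emph{distinguished} in the dynamic sense: every reflexive $y\in|\gog_i|$ must be $\theta$-final for some $\theta$ or lie in $f_\gog[|\gog_{i-1}|]$. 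Reflexive points introduced in the construction of $\gog_{i+1}$ via $\mathfrak b_v$ or the recursive sub-moments of Lemma~\ref{lemmExistsMoment} are $\theta$-final by that lemma, while reflexive points in the cluster $D$ of the root are, by the construction of $f_D$, precisely images under $f_i$ of reflexive points of $C(r_{\gog_i})\subseteq |\gog_i|$; together with the base case being distinguished, this gives the dynamic distinguishedness condition at every level. The immersive case avoids this subtlety altogether because strict monotonicity and injectivity of each $f_i$ are delivered directly by Lemma~\ref{lemmExistsSuper}.
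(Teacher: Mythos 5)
Your proof follows essentially the same route as the paper's: induction on $I$, with the base case given by Lemma~\ref{lemmExistsMoment} and the inductive step by Lemma~\ref{lemmExistsSuper}, gluing the resulting moments and maps into a story. The additional bookkeeping you carry out (stabilisation, the distinguishedness of reflexive points in $D$ versus those arising from the $\mathfrak b_v$) is correct and merely makes explicit what the paper leaves to the reader.
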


\begin{proof}
We proceed by induction on $I$.
For $I=0$, this is essentially Lemma \ref{lemmExistsMoment}.
Otherwise, by the induction hypothesis, assume that a story $\hat{\gog}$ of depth $I$ and a dynamic $p$-morphism $\hat \pi$ exist. 
By Lemma \ref{lemmExistsSuper}, there is a moment $\gog_{I+1}$, suitable map $f_I\colon |\gog_I| \to |\gog_{I+1}|$, and a $\Phi$-morphism $\pi_{I+1} \colon |\gog_{I+1}| \to W_{\rm c}$ commuting with $f_I$, where $\pi_{I+1}$ can be taken to be distinguished if $\Lambda$ is not immersive.
We define $\gog$ by adding
$\gog_{I+1}$ to $\hat{\gog}$ in order to obtain the desired story.
\end{proof}

 It follows that every satisfiable formula is also satisfiable on a finite story, hence satisfiable on a finite model, yielding the main result of this section.

\begin{theorem}\label{wk4c}
The logics $\mathbf{K4C}^\infty$, $\mathbf{K4DC}^\infty$, $\mathbf{K4I}^\infty$ and $\mathbf{K4DI}^\infty$ are sound and complete for their respective classes of finite dynamic $\Lambda$-frames.
\end{theorem}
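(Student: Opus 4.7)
The plan is to dispatch soundness as a routine verification and to obtain completeness by turning canonical model analysis into a finite story model via the machinery already developed. For soundness, I would show that each axiom is valid on the corresponding class of finite dynamic frames: $\mathbf{K4}$ holds on every transitive frame; ${\rm Fix}_{\dd^\infty}$ and ${\rm Ind}_{\dd^\infty}$ are valid thanks to the greatest-fixed-point characterisation of the tangled derivative; the temporal axioms and ${\rm Nec}_\bc$ hold since $\bc$ is interpreted by a function; Lemma~\ref{lemCH} takes care of ${\rm C}_\diamondplus$ for continuous or immersive $f$; and ${\rm CTan}_\diamondplus$ follows from the observation that $f^{-1}$ commutes appropriately with $\dot\rho^\infty$ or $\rho^\infty$, via the definition of tangled derivative. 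Seriality for the $\mathbf{D}$-logics is immediate.

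For completeness, fix $\Lambda$ among the four logics and let $\varphi$ be $\Lambda$-consistent. By Lindenbaum's lemma there is $w\in W_{\rm c}$ with $\varphi\in w$. Let $\Phi$ be the closure of $\{\varphi,\dd\top\}$ under subformulas and single negations, which is finite, and let $I$ be the $\bc$-depth of $\varphi$. Applying Proposition~\ref{propExistsStory} with these data yields a story $\gog$ of duration $I$ together with a dynamic $\Phi$-morphism $\pi\colon|\gog|\to W_{\rm c}$ such that $\pi(r_\gog)=w$; the story is immersive when $\Lambda$ is immersive, and otherwise $\pi$ is distinguished. Since $\varphi\in\Phi\cap\pi(r_\gog)$ and $\varphi$ has $\bc$-depth at most $I$, the truth preservation lemma gives $r_\gog\in\|\varphi\|_\gog$, so $\varphi$ is satisfied at the root of the story.

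Next I would verify that $\gog$ belongs to the appropriate class of finite dynamic $\Lambda$-frames. By construction $|\gog|$ is a finite disjoint union of finite tree-like $\Lambda$-moments, so $\langle|\gog|,\sqsubset_\gog\rangle$ is a finite transitive frame; monotonicity (respectively strict monotonicity and injectivity) of each $f_i$ together with the identity action on $\gog_I$ makes $f_\gog$ monotonic (respectively strictly monotonic), hence continuous (respectively an immersion) with respect to ${\downarrow_{\sqsubset_\gog}}$. For the $\mathbf{D}$-logics, Lemma~\ref{story} guarantees that every moment appearing in the construction is serial whenever it is built from serial constituents in the presence of the axiom ${\rm D}$; disjoint unions preserve seriality, so $\sqsubset_\gog$ is serial as well.

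The hard work lies buried in the witnessing lemma for $\dtan$ in the non-immersive case, where distinguishedness and the axiom ${\rm CTan}_\diamonddot$ are essential in order to reconstruct tanglehood in the canonical model from tanglehood in the story. Since that has already been handled in Lemma~\ref{lemWit}, the present theorem reduces to assembling Proposition~\ref{propExistsStory}, the truth preservation lemma, and the routine verification that stories are dynamic $\Lambda$-frames of the required kind.
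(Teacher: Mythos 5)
Your proposal is correct and follows essentially the same route as the paper: soundness via Lemma~\ref{lemCH}, and completeness by taking a maximal consistent set in the canonical model, invoking Proposition~\ref{propExistsStory} to produce a story with a dynamic $\Phi$-morphism, and applying the truth preservation lemma at the root. Your version is in fact slightly more explicit than the paper's about the choice of $\Phi$ (closure of $\{\varphi,\dd\top\}$ under subformulas and single negations) and of the duration $I$ (the $\bc$-depth of $\varphi$), which are hypotheses the cited lemmas genuinely require.
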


\begin{proof}
Soundness follows from Lemma \ref{lemCH} and the well-known soundness results reviewed in Section \ref{secDTL}.
Let $\Lambda\in\{\mathbf{K4C}^\infty , \mathbf{K4DC}^\infty,\mathbf{K4I}^\infty,\mathbf{K4DI}^\infty\}$ and suppose $\Lambda\not\vdash\varphi$. Then in the canonical model $\mathfrak{M}^\Lambda_{\rm c}=\langle W,\sqsubset,g,\nu\rangle $,
there is $w\in W$ that refutes $\varphi$. 
By Proposition~\ref{propExistsStory}, there is a story $\gog$ and a dynamic $\Phi$-morphism $\pi:|\gog|\rightarrow W$ such that $w = \pi(r)$, where $r$ is a root of $\gog$. It follows that $\gog,r \not\models\varphi$. Recall that $\gog$ is a finite dynamic derivative frame, and it is serial if $\Lambda$ contains $\rm D$ and immersive if it contains ${\rm C}_\dd$, as required.
\end{proof}

\section{Metric $d$-completeness}\label{topologicaldcompleteness}

In this section we establish completeness results for dynamic metric systems.
It will be convenient for our Kripke models to duplicate all reflexive points, so that the Cantor derivative can truly be evaluated using points different from the evaluation point.
We use the following well-known construction~\cite{EsakiaAlgebra}.

\begin{definition}
Let $\mathfrak{F}=\langle W,\sqsubset   ,g\rangle$ be a dynamic Kripke frame and let $W^\mathrm i$ and $W^\mathrm r$ be the sets of irreflexive and reflexive points, respectively. 
We define a new frame $\mathfrak{F}_\oplus=\langle W_\oplus,\sqsubset  _\oplus ,g_\oplus\rangle$, where
\begin{enumerate}

\item $W_\oplus = (W^{\rm i}\times \{0\})\cup (W^{\rm r}\times \{0,1\})$;

\item $(w,i) \sqsubset   _\oplus (v,j)$ iff $w\sqsubset   v$;

\item $g_\oplus(w,i) = (g(w),i)$.

\end{enumerate}
\end{definition}

The following is standard and easily verified.

\begin{proposition}\label{propOplus}
If $\mathfrak{F}=\langle W,\sqsubset   ,g\rangle$ is any dynamic derivative frame, then $\mathfrak {F}_\oplus$ is a dynamic derivative frame where reflexive clusters have at least two points and $\pi\colon W_\oplus \to W $ given by $\pi(w,i)=w$ is a surjective, dynamic $p$-morphism.
Moreover, if $\mathfrak F$ is a story, so is $\mathfrak {F}_\oplus$, and if $\mathfrak F$ is immersive, so is $\mathfrak {F}_\oplus$.
\end{proposition}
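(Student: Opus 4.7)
The plan is to unpack each clause in turn; all verifications are essentially bookkeeping, since $\sqsubset_\oplus$ and $g_\oplus$ only act on the first coordinate of pairs.

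First I would check that $\mathfrak F_\oplus$ is a dynamic derivative frame. Transitivity of $\sqsubset_\oplus$ follows at once from transitivity of $\sqsubset$: from $(w,i)\sqsubset_\oplus(v,j)\sqsubset_\oplus(u,k)$ we obtain $w\sqsubset v\sqsubset u$, hence $w\sqsubset u$, hence $(w,i)\sqsubset_\oplus(u,k)$. For $g_\oplus$, by the earlier lemma that monotonicity of the transition implies continuity with respect to $\downarrow_\sqsubset$, it suffices to verify monotonicity; but if $(w,i)\sqsubseteq_\oplus(v,j)$ then $w\sqsubseteq v$, so $g(w)\sqsubseteq g(v)$ by monotonicity of $g$, whence $g_\oplus(w,i)\sqsubseteq_\oplus g_\oplus(v,j)$ directly from the definition.

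Next I would address the claim about reflexive clusters and the map $\pi$. If $w\in W^{\mathrm r}$, then $(w,0)$ and $(w,1)$ are distinct points of $W_\oplus$, and $w\sqsubset w$ gives $(w,0)\sqsubset_\oplus(w,1)$ and $(w,1)\sqsubset_\oplus(w,0)$, so they sit in a common $\equiv_\oplus$-cluster of size at least two. For $\pi(w,i)=w$: surjectivity is immediate because every $w\in W$ has the preimage $(w,0)$; the commutation $\pi\circ g_\oplus=g\circ\pi$ is read off the definitions; the forth condition is just the definition of $\sqsubset_\oplus$; and for back, given $\pi(w,i)=w\sqsubset v$, I pick $j$ with $(v,j)\in W_\oplus$ (taking $j=0$ always works) to obtain $(w,i)\sqsubset_\oplus(v,j)$ with $\pi(v,j)=v$.

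Finally, I would verify preservation of the story and immersion structure level by level. If $\mathfrak F=\bigsqcup_{i\leq I}\mathfrak F_i$ is a story with transitions $f_i$, I would take the obvious decomposition $\mathfrak F_\oplus=\bigsqcup_{i\leq I}(\mathfrak F_i)_\oplus$ and set $(f_i)_\oplus(w,k):=(f_i(w),k)$. Root preservation, monotonicity, and stabilisation on the top level transfer coordinatewise from the corresponding properties of $f_i$. Almost injectivity transfers because $(f_i)_\oplus(x,k)=(f_i)_\oplus(y,\ell)$ forces $f_i(x)=f_i(y)$ and $k=\ell$, and reflexivity on the second coordinate is inherited from the first. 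Cluster-preservation is likewise coordinatewise, using that duplicating a cluster of $\mathfrak F_i$ gives precisely the cluster in $(\mathfrak F_i)_\oplus$ of any duplicated point. For the immersive case, strict monotonicity and injectivity of each $f_i$ pass to $(f_i)_\oplus$ by the same coordinatewise argument. The only mild obstacle throughout is keeping careful track of the index $k\in\{0,1\}$ in the back condition and in almost-injectivity, but no genuine combinatorial difficulty arises.
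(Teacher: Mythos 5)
The paper gives no proof of this proposition (it is dismissed as ``standard and easily verified''), so you are supplying the intended routine verification, and most of it is exactly the right bookkeeping: transitivity of $\sqsubset_\oplus$, the three conditions making $\pi$ a surjective dynamic $p$-morphism, the two-point reflexive clusters, and the transfer of root-preservation, stabilisation, almost-injectivity and (strict) monotonicity are all fine as you argue them.

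There is, however, one step that is genuinely \emph{not} coordinatewise, and it is precisely where you wave your hands: the transfer of cluster-preservation (together with the well-definedness of $g_\oplus$ that it presupposes). The construction duplicates only the reflexive points, so passing to $\mathfrak F_\oplus$ changes cluster sizes asymmetrically whenever the transition function fails to preserve or reflect reflexivity --- and nothing in Definition~\ref{defStory} forbids this. Concretely, take a story with $|\gog_0|=\{x\}$, $x$ irreflexive, $|\gog_1|=\{y\}$, $y$ reflexive, and $f_0(x)=y$: every clause of the definition (including cluster-preservation, since $C(y)=\{y\}=f_0[C(x)]$) is satisfied, yet in $\mathfrak F_\oplus$ the cluster of $g_\oplus(x,0)=(y,0)$ is $\{(y,0),(y,1)\}$ while the image of the cluster of $(x,0)$ is only $\{(y,0)\}$, so cluster-preservation fails and $\mathfrak F_\oplus$ is not a story. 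Dually, if $x$ is reflexive but $g(x)$ is irreflexive, then $g_\oplus(x,1)=(g(x),1)$ is not even an element of $W_\oplus$ (and your monotonicity argument for $g_\oplus$ quietly relies on this never happening). Your assertion that ``duplicating a cluster of $\mathfrak F_i$ gives precisely the cluster in $(\mathfrak F_i)_\oplus$ of any duplicated point'' is true, but $g_\oplus[C_\oplus(x,k)]$ is the full duplication of $g[C(x)]$ only when $C(x)$ itself was duplicated. To close the gap you must either add the hypothesis that the transition function sends reflexive points to reflexive points and irreflexive to irreflexive --- which does hold for the stories actually produced in Lemma~\ref{lemmExistsSuper}, where the reflexivity of the target cluster $D$ is chosen to match that of the source --- or repair the definition of $g_\oplus$ on such points; as stated for arbitrary stories the claim is not quite correct, and your proof inherits the defect at exactly this step.
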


Next, following Kremer and Mints~\cite{KremerM07}, we need to add limits to our Kripke models.

\begin{definition}
Let $\mathfrak F = \langle W,\sqsubset   ,g\rangle$ be a finite dynamic Kripke frame.
A {\em path} through $\mathfrak F$ is an infinite sequence $\vec w = (w_i)_{i=0}^\infty$ such that $w_i\sqsubseteq w_{i+1}$.
A {\em finite path} is defined similarly but has finitely many elements.
The set of (infinite) paths is denoted $\vec W $.

For a path $\vec w = (w_i)_{i=0}^\infty$, we define $\vec g(\vec w) = (g(w_i))_{i=0}^\infty$.
A {\em limit assignment} is a function $\lim$ assigning to each $\vec w\in\vec W $ an element $\lim \vec w\in W$ such that $\lim \vec w$ occurs infinitely often in $\vec w$, and such that $\lim \vec g(\vec w) = \vec g(\lim \vec w)$.
\end{definition}

\begin{lemma}
Every story has a limit assignment.
\end{lemma}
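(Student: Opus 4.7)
Let $\mathfrak S$ be a story of duration $I$ with transition function $f:=f_\mathfrak S$, and let $\vec f$ denote the induced shift on paths. The plan is to exploit the \emph{stabilising} property of $f$: since $f_I$ is the identity on $|\mathfrak S_I|$, iterating $\vec f$ at most $I$ times places every path eventually inside a cluster of $|\mathfrak S_I|$, where $f$ acts trivially. In detail, because $\mathfrak S$ is finite any path $\vec w$ is eventually contained in some cluster $C_{\vec w}\subseteq|\mathfrak S_j|$, and by cluster-preservation $\vec f(\vec w)$ is eventually contained in $f[C_{\vec w}]\subseteq|\mathfrak S_{j+1}|$. We set $k(\vec w)$ to be the least $k$ such that $\vec f^{\,k}(\vec w)$ is eventually in a cluster of $|\mathfrak S_I|$; this is well defined and bounded by $I-j$.

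Write $V_{\vec w}$ for the non-empty set of elements of $|\mathfrak S|$ appearing infinitely often in $\vec w$. A pigeonhole argument, using finiteness of $C_{\vec w}$, yields the key identity $V_{\vec f(\vec w)}=f[V_{\vec w}]$. We then construct $\lim$ by induction on $k(\vec w)$. For the base case $k(\vec w)=0$, fix once and for all a choice function $\mathrm{ch}_C$ on the non-empty subsets of each cluster $C\subseteq|\mathfrak S_I|$ and set $\lim\vec w:=\mathrm{ch}_{C_{\vec w}}(V_{\vec w})$; since $f$ restricts to the identity on $|\mathfrak S_I|$, both $V_{\vec f(\vec w)}=V_{\vec w}$ and $f(\lim\vec w)=\lim\vec w$, so the commuting condition $f(\lim\vec w)=\lim\vec f(\vec w)$ holds trivially. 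For the inductive step $k(\vec w)>0$, the value $\lim\vec f(\vec w)$ has already been defined and belongs to $V_{\vec f(\vec w)}=f[V_{\vec w}]$, so we pick (using choice) some $\lim\vec w\in V_{\vec w}\cap f^{-1}(\lim\vec f(\vec w))$, which is non-empty precisely by the key identity.

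Both requirements of a limit assignment then follow by construction: $\lim\vec w\in V_{\vec w}$ ensures that $\lim\vec w$ appears infinitely often in $\vec w$, while the commuting condition is enforced step-by-step along the induction. The only subtle point is the identity $V_{\vec f(\vec w)}=f[V_{\vec w}]$: the inclusion $f[V_{\vec w}]\subseteq V_{\vec f(\vec w)}$ is immediate, whereas the reverse relies on $f^{-1}(u)\cap C_{\vec w}$ being finite, so that among the infinitely many $i$ with $f(w_i)=u$ some single element of $f^{-1}(u)$ must occur infinitely often in $\vec w$. The conditions of almost-injectivity, cluster-preservation, and root-preservation from Definition~\ref{defStory} are not invoked directly in the construction, but they underpin the well-definedness of the chain $C_{\vec w}, f[C_{\vec w}], f^2[C_{\vec w}],\ldots$ and hence of $k(\vec w)$.
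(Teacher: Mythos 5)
Your proof is correct, but it takes a genuinely different route from the paper's. The paper constructs, by \emph{forward} recursion on the time index, injective rank functions $h_i\colon|\mathfrak S_i|\to\mathbb N$ compatible with $f$ (each point in the range of $f_i$ receives the minimum rank of its $f_i$-preimages, new points receive larger ranks), and then defines $\lim\vec w$ uniformly as the $h$-least element occurring infinitely often; verifying that limits commute with $\vec f$ then genuinely uses almost-injectivity and cluster-preservation to pin down the unique preimage of any competing reflexive point. You instead recurse \emph{backward} from the final moment, where the stabilising condition makes commutation trivial, and pull the choice of limit back through $f^{-1}$ via the identity $V_{\vec f(\vec w)}=f[V_{\vec w}]$, which you correctly justify by pigeonhole on the finite fibres. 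Your argument is arguably leaner: it dispenses with almost-injectivity and cluster-preservation altogether, and in fact these are not needed even for the well-definedness of $k(\vec w)$ --- since $\sqsubset_{\mathfrak S}$ is a disjoint union over moments, every path lives inside a single $|\mathfrak S_j|$, and $k(\vec w)=I-j$ follows directly from the clause $f_i\colon|\mathfrak S_i|\to|\mathfrak S_{i+1}|$ together with stabilisation, so your closing caveat overstates what those conditions are for. What the paper's construction buys is a single globally defined order, making $\lim$ the `least infinitely recurring point' rather than a path-by-path selection; but as $|\mathfrak S|$ is finite, your choices can equally be made canonical by fixing one linear order on $|\mathfrak S|$ (no appeal to the axiom of choice is needed), so nothing essential is lost.
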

\begin{proof}
Let $\gog= \langle|\gog|,\sqsubset   ,f ,\nu ,r  \rangle$ be a story of duration $I$.
We will assign limits by linearly ordering the elements of each $|\gog_i|$ and letting $\lim\vec w$ be the least element (under this order) that occurs infinitely often.
In order to define this linear order, it suffices to choose an injective function $h_i\colon |\gog_i| \to \mathbb N$.
For $i=0$ this function may be chosen arbitrarily, but for $i+1$ we must ensure that $h_{i+1}$ is chosen in a way that limits commute.
To this end, first define for $w$ in the range of $f $ $h_{i+1}(w) = \min \{h_i(v):f (v) = w\}$.
Then, if $w$ is not in this range we choose $h_{i+1}(w)$ arbitrarily, provided it is larger than all previously defined values of $h_{i+1}$.

We must check that $\vec f$ commutes with limits.
Let $\vec w $ be a path through $\gog_i$ and $w=\lim\vec w$.
Then, $w$ occurs infinitely often in $\vec w$, so $f(\vec w)$ occurs infinitely often in $\vec f(\vec w)$.
We must show that any $v$ occurring infinitely often in $\vec f(\vec w)$ is so that $h_{i+1} (v) \geq h_{i+1} (f(w))$.
If $v=f(\vec w)$ there is nothing to prove, so we assume otherwise.
But then $f(\vec w)$ and $v$ are in the same cluster since they both occur infinitely often on a path, which means that they are both reflexive; since $f$ is almost injective and cluster-preserving, this means that there is a unique $w'\in |\gog_i|$ such that $f(w') = v$ and $w'$ is in the same cluster as $w$.
But then, $w'$ occurs infinitely often on $\vec w$ (since $f(w')$ occurs infinitely often in $\vec f(\vec w)$), which from $w'\neq\lim\vec w$ implies that $h_i(w') > h_i(w)$, thus $h_{i+1}(v) >h_{i+1} (f(w))$ and $\lim \vec f(\vec w) \neq v$, which by elimination yields $\lim \vec f(\vec w) = f(w)$.
\end{proof}

\begin{definition}
Let $\mathfrak F = \langle W,\sqsubset   ,g\rangle$ be a finite dynamic Kripke frame with a limit assignment.
We define a metric $\delta$ on $\vec W $ such that $\delta(\vec w,\vec v) = 0$ if $\vec w = \vec v$, and otherwise $\delta(\vec w,\vec v) = 2^{-n}$ for the least $n$ such that $w_n\neq v_n$.
\end{definition}

As usual, we write $B_\varepsilon(\vec w)$ for the set $\{\vec v\in \vec W: \delta(\vec w,\vec v)< \varepsilon\}$; this is the {\em ball of radius $\varepsilon$ around $\vec w$.}

\begin{proposition}\label{propCantorMorphism}
Let $\mathfrak F = \langle W,\sqsubset   ,g\rangle$ be a finite story such that every reflexive cluster has at least two elements, and let $\lim$ be a limit assignment on $\mathfrak F $.
The structure $\vec{\mathfrak F} = (\vec W ,\delta,\vec g)$ is a dynamic metric system, and $\lim\colon \vec W \to W$ is a dynamic $p$-morphism.
Moreover, if $g$ is immersive, then so is $\vec g$.
\end{proposition}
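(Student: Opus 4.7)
The plan is to verify three things: that $(\vec W, \delta)$ is a metric space on which $\vec g$ acts continuously, that $\lim$ satisfies the forth, back, and commutation conditions of Proposition~\ref{propDMorphMetric}, and that $\vec g$ is an immersion whenever $g$ is strictly monotonic and injective.

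For the first part, $\delta$ is an ultrametric by a standard argument, and $\vec g(\vec w)$ is again a path because $g$ is monotonic on $W$. Moreover, if $\delta(\vec w, \vec v) < 2^{-n}$ then $w_i = v_i$ for $i \le n$, hence $g(w_i) = g(v_i)$ for $i \le n$, so $\delta(\vec g(\vec w), \vec g(\vec v)) \le 2^{-n}$; thus $\vec g$ is $1$-Lipschitz and in particular continuous. The commutation $\lim \circ \vec g = g \circ \lim$ is immediate from the definition of a limit assignment.

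For the forth condition, I would fix $\vec w$ and set $w = \lim \vec w$, then choose $M$ large enough that $w_M \in C(w)$ when $w$ is reflexive and $w_M = w$ when $w$ is irreflexive, and set $\varepsilon = 2^{-M-1}$. For $\vec v \neq \vec w$ with $\delta(\vec w, \vec v) < \varepsilon$, the sequences agree at all indices $\le M$, so $v_j \sqsupseteq v_M = w_M$ for every $j \ge M$, giving $\lim \vec v \sqsupseteq w$. When $w$ is reflexive this already yields $w \sqsubset \lim \vec v$. When $w$ is irreflexive the first index $N > M$ at which $\vec v$ deviates from $\vec w$ satisfies $v_N \sqsupset w$ strictly (since $v_N \neq w$ and $v_N \sqsupseteq w$), which by transitivity forces $v_j \neq w$ for $j \ge N$, and hence $\lim \vec v \neq w$. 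For the back condition, given $w \sqsubset u$ and $\varepsilon > 0$, I would fix $M$ with $2^{-M} < \varepsilon$ large enough that $w_{M-1} \sqsubseteq u$ (which holds once $w_{M-1} \in C(w)$, since $w \sqsubset u$). If $u \neq w$, the path defined by $v_i = w_i$ for $i < M$ and $v_i = u$ for $i \ge M$ has $\lim \vec v = u$ and is distinct from $\vec w$. If $u = w$ then $w$ is reflexive and by hypothesis on $\mathfrak F$ there is some $w' \in C(w)$ with $w' \neq w$; inserting $w'$ at position $M$ and then $w$ at every subsequent position yields the required witness.

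Finally, when $\mathfrak F$ is immersive, each $f_i$ is injective and the codomains $|\gog_{i+1}|$ are pairwise disjoint, so $g$ is injective on $W$, whence $\vec g$ is pointwise injective. A continuous, injective, and hence locally injective map always satisfies $d_{\vec W}\, \vec g^{-1}(A) \subseteq \vec g^{-1} d_{\vec W}(A)$ for every $A \subseteq \vec W$, so $\vec g$ is an immersion. The main obstacle I foresee is the forth condition, where one must rule out the possibility that a nearby path $\vec v \neq \vec w$ still has $\lim \vec v = w$; this is precisely where the reflexive/irreflexive dichotomy for $w$ and the hypothesis that reflexive clusters contain at least two points (used symmetrically in the back condition when $u = w$) have real bite.
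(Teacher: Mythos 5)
Your overall strategy matches the paper's: the same ultrametric, the same verification of the forth and back clauses of Proposition~\ref{propDMorphMetric} via the ``occurs infinitely often'' property of $\lim$, and the same use of the hypothesis that reflexive clusters have at least two points to produce the witness in the back condition when $u=w$. Your treatment of the forth condition, with the explicit reflexive/irreflexive case split for $w=\lim\vec w$ and the observation that once $\vec v$ strictly leaves an irreflexive $w$ it can never return, is if anything cleaner than the paper's argument. Most of the proof is correct.

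There is, however, a genuine error in the immersion part. You claim that $g$ is injective on $W$ because each $f_i$ is injective and the codomains $|\gog_{i+1}|$ are pairwise disjoint. This overlooks the stabilising clause of Definition~\ref{defStory}: $f_\gog$ acts as the identity on $|\gog_I|$ while $f_{I-1}$ also maps $|\gog_{I-1}|$ into $|\gog_I|$; for instance $f_\gog(r_{I-1})=r_I=f_\gog(r_I)$ by root-preservation, so $g$ is not globally injective, and neither is $\vec g$ (the constant path at $r_{I-1}$ and the constant path at $r_I$ have the same image). What is true, and what your final inference actually needs, is \emph{local} injectivity: every $\vec v\in B_1(\vec w)$ shares its first coordinate with $\vec w$, hence lies entirely in the same level $|\gog_i|$, on which $g$ coincides with the injective map $f_i$; this is exactly the paper's argument, and it repairs your proof with a one-line change. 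A second, much smaller, point: in the back condition with $u=w$, the path obtained by inserting $w'$ at position $M$ and then repeating $w$ could coincide with $\vec w$ itself (if $w_M=w'$ and $w_j=w$ for all $j>M$), violating the requirement $0<\delta(\vec v,\vec w)$; choosing $M$ so that $w_M=w\neq w'$ removes the issue. (The paper's own proof is equally terse on this last point.)
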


\begin{proof}
To check that $\vec g$ is continuous, it suffices to note that if $\delta (\vec w,\vec v) <2^{-n}$, then they coincide on the first $n$ elements, hence so do $\vec g(\vec w),\vec g(\vec v)$ and $\delta(\vec g(\vec w),\vec g(\vec v)) < 2^{-n}$ as well.
It is moreover immediate that ${\lim}   \circ \vec g = g\circ \lim   $ since limits are assumed to have this property.
Now, if $g$ is immersive, we must check that $\vec g$ is locally injective to see that it is also immersive.
Let $\vec w$ be any path through $\gog_i$; then, $B_1(\vec w)$ is a neighbhorhood of $\vec w$ which has the property that any $\vec v \in B_1(\vec w)$ is also a path through $\gog_i$ (since $v_0=w_0$).
But by definition $g$ is injective on $\gog_i$, from which it is readily checked that $\vec g(\vec w) = \vec g(\vec v)$ implies $\vec w=\vec v$, as each of their components must be equal.

It remains to check that $\lim $ is a $p$-morphism.
First we show that if $\vec w \in \vec W$, then there is $\varepsilon >0$ such that  $\vec g[B_\varepsilon (\vec w) ] \subseteq {\uparrow} \lim w$.
Since $\lim\vec w$ occurs infinitely often in $\vec w$, we may choose $n$ so that $w_n =  \lim\vec w$, then define $\varepsilon  = 2^{-n}$.
Let $\vec v \in B_\varepsilon (\vec w) \setminus \{\vec w\} $.
By transitivity, $w_n\sqsubseteq v_m$ for all $m\geq n$, and in particular $\lim\vec w\sqsubseteq \lim\vec v$.
If $\lim w \neq \lim v$ we are done, otherwise since $\vec v\neq\vec w$ by assumption, we can choose $m>n$ such that $v_m\neq w_m$.
This means that at least one of the two is not equal to $\lim \vec w$, say $v_m$.
Since $\lim \vec w$ occurs infinitely often in $\vec v$, we see that $\lim \vec w = v_n\sqsubset v_m \sqsubseteq \lim \vec v$, which by transitivity yields $\lim \vec w \sqsubset \lim \vec v$, as needed.

Now suppose that $\vec g(\vec w) = v$ and let $v'\sqsupset v$ and $\varepsilon >0$; we need to find $\vec u \in B_\varepsilon(\vec w)$ such that $\vec u\neq \vec w$ and $\lim \vec u = v'$.
Since $v$ occurs infinitely often in $\vec w$, we can choose $n$ such that $(w_i)_{i<n} $ extends $\vec a$ and $w_n = v$.
Now consider two cases.
If $v'\neq v$, then for the path $\vec u:=(w_0,\ldots,w_n,v',v',\ldots)$ we have that $\delta(\vec u,\vec w)<\varepsilon$, and $\vec u\neq \vec w$ since $v$ does not occur infinitely often.
Moreover, $\lim\vec u= v'$ since this is the only element that occurs infinitely often.
Otherwise, $v' =  v$ and $v$ is reflexive.
But by assumption there is $v''$ in the same cluster, and we consider $\vec u:=(w_0,\ldots,w_n,v'',v,v,\ldots)$, which as before has the desired properties.
\end{proof}

We wish to show that $\vec W $ is in fact homeomorphic to a subset of the Cantor space.
For this we use the following two results.

\begin{theorem}[Brouwer (e.g.\ \cite{mill})]\label{brouwer}
A metric space is a Cantor space if and only if it is non-empty, perfect, compact, and totally disconnected.
\end{theorem}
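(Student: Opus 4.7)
The plan is to prove the two directions separately, with the forward direction being routine and the backward direction requiring a back-and-forth style construction of a homeomorphism with $\{0,1\}^\omega$.

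First I would verify that the standard Cantor space $\{0,1\}^\omega$ (with the metric $\delta(\sigma,\tau) = 2^{-n}$ for the least $n$ with $\sigma_n \neq \tau_n$) has all four properties: non-emptiness is immediate; compactness follows by K\"onig's lemma or Tychonoff's theorem applied to the finite space $\{0,1\}$; perfectness holds because flipping a bit arbitrarily far in the tail of any $\sigma$ yields a distinct point arbitrarily close to $\sigma$; and total disconnectedness holds because the cylinder sets $[s] := \{\sigma : \sigma \text{ extends } s\}$ (for $s \in \{0,1\}^{<\omega}$) are clopen and separate any two distinct points. Since these properties are all preserved by homeomorphism, every Cantor space satisfies them.

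For the harder direction, given a non-empty, perfect, compact, totally disconnected metric space $X$, I would build a sequence of clopen partitions $\mathcal{P}_n = \{C_s : s \in \{0,1\}^n\}$ of $X$ such that (i) each $C_s$ is non-empty and clopen, (ii) each $C_s$ has diameter less than $2^{-n}$, and (iii) $\mathcal{P}_{n+1}$ refines $\mathcal{P}_n$ with $C_s = C_{s0} \sqcup C_{s1}$. The construction proceeds inductively: at each step, fix a cell $C_s \in \mathcal{P}_n$; by total disconnectedness and compactness it admits a finite cover by clopen sets of diameter less than $2^{-(n+1)}$, which can be turned into a finite clopen partition by taking Boolean combinations; by perfectness this partition has at least two non-empty parts, so we may split the pieces into two clopen non-empty groups $C_{s0}, C_{s1}$, refining further if necessary to keep diameters below $2^{-(n+1)}$. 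The map $\Phi \colon X \to \{0,1\}^\omega$ sending $x$ to the unique sequence $\sigma$ with $x \in \bigcap_n C_{\sigma\upharpoonright n}$ is well-defined (by non-emptiness and the finite intersection property under compactness), injective (because diameters shrink to zero), surjective (again by compactness, since each nested intersection of non-empty closed sets is non-empty), and a homeomorphism (since the cylinders form a basis on both sides and $\Phi$ sends $C_s$ to $[s]$).

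The main technical obstacle is the inductive step itself: one must simultaneously ensure the refining partition is clopen, that each parent cell splits into exactly two non-empty children, and that diameters halve. The natural maneuver is to first produce a finer partition satisfying the diameter condition using compactness plus total disconnectedness, then \emph{merge} children within each parent cell into exactly two clopen groups, using perfectness to guarantee at least two groups are available. Verifying that $\Phi$ is a homeomorphism is then a straightforward consequence of the defining properties of $(\mathcal{P}_n)_{n<\omega}$ together with the compactness of $X$.
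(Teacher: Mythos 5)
The paper does not actually prove this statement: it is quoted as a classical theorem of Brouwer with a citation to van Mill, so there is no in-paper argument to compare yours against, and I am assessing your sketch on its own terms. Your overall strategy is the standard one (a Cantor scheme of clopen cells indexed by $\{0,1\}^{<\omega}$), and the forward direction and the final verification that $\Phi$ is a homeomorphism are fine. The genuine gap is in the inductive invariant you impose, namely that every cell at level $n$ is clopen, non-empty, of diameter less than $2^{-n}$, \emph{and} splits into exactly two such children: this cannot in general be maintained, and the escape hatch you offer (``refining further if necessary'') does not repair it, because refining happens before merging and the merged groups can be large. Concretely, suppose $C_s$ is the union of three clopen pieces of tiny diameter that are pairwise at distance $0.6$, so that $\mathrm{diam}(C_s)<1$. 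Any partition of $C_s$ into two clopen parts must, by pigeonhole, place points of two distinct pieces into the same part, which then has diameter at least $0.6$; hence no binary clopen splitting halves the diameter, no matter how finely you first refine. The standard fix is to decouple the two requirements: demand only that diameters tend to $0$ along each infinite branch, achieving the bound $2^{-j}$ at some level $n_j$ of the binary tree and filling in the intermediate levels by grouping; equivalently, first build finite clopen partitions of mesh $2^{-j}$ with arbitrary finite branching, then re-index over $\{0,1\}^{<\omega}$, using perfectness to split any non-empty clopen cell into any prescribed finite number of non-empty clopen parts.

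A second, smaller issue: the step ``by total disconnectedness and compactness it admits a finite cover by clopen sets of diameter less than $2^{-(n+1)}$'' is exactly where the hypothesis of total disconnectedness earns its keep, and it is not immediate. What is needed is that a compact Hausdorff totally disconnected space is zero-dimensional, i.e.\ has a basis of clopen sets; this rests on the fact that in compact Hausdorff spaces connected components coincide with quasi-components. Without that fact, total disconnectedness only says that components are singletons, not that clopen sets separate points from closed sets. Since this is the one place where the hypothesis is used in full strength, it deserves an explicit argument or citation rather than a passing clause.
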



In particular, it is well known that the set of branches on the infinite binary tree is homeomorphic to the Cantor set.
It is not hard to see that this binary tree is of the form $\vec W$, where $W$ is a two-element cluster.
More generally, $\vec W$ is {\em always} a Cantor set, provided some mild conditions are satisfied.

\begin{lemma}\label{lemmIsCantor}
Let $\langle W,\sqsubset\rangle$ be a non-empty, finite, transitive frame where every reflexive cluster has at least two points.
If $\langle W,\sqsubset\rangle$ is serial, then $\vec W$ is homeomorphic to the Cantor set, and if $\langle W,\sqsubset\rangle $ is transitive but not necessarily serial, then $\vec W$ is homeomorphic to a closed subset of the Cantor set.
\end{lemma}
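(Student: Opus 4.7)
The plan is to verify the hypotheses of Brouwer's characterization (Theorem~\ref{brouwer}) in the serial case, and to exhibit $\vec W$ as a closed subset of a Cantor space in the non-serial case. I would first check the two properties common to both cases. Giving $W$ the discrete topology and $W^\omega$ the product topology, the metric $\delta$ induces this product topology, so $W^\omega$ is compact because $W$ is finite; and $\vec W$ is closed, since if $\vec v \notin \vec W$ then $v_n \not\sqsubseteq v_{n+1}$ for some $n$, and every sequence that agrees with $\vec v$ at positions $n$ and $n+1$ inherits the same failure, giving an open neighbourhood of $\vec v$ disjoint from $\vec W$. Hence $\vec W$ is compact. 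For total disconnectedness, the basic balls $B_{2^{-n}}(\vec w) = \{\vec v \in \vec W : v_i = w_i \text{ for } i \leq n\}$ are clopen and form a basis of the topology, so $\vec W$ is zero-dimensional.

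The crucial step is to verify that $\vec W$ is perfect in the serial case. Given $\vec w \in \vec W$ and $n \geq 0$, I would produce $\vec v \in \vec W$ with $\vec v \neq \vec w$ but $v_i = w_i$ for $i \leq n$. This reduces to the claim that from every $x \in W$ there exist at least two distinct infinite $\sqsubseteq$-paths starting at $x$, since one may then prepend the prefix $(w_0, \ldots, w_{n-1})$ to two such paths starting at $x = w_n$ to obtain two distinct extensions of $\vec w$'s prefix, at least one of which differs from $\vec w$. To establish the claim, use seriality to pick $y \sqsupset x$. If $y$ lies in a reflexive cluster, by hypothesis that cluster contains some $y' \neq y$, and $(x, y, y, y, \ldots)$ and $(x, y, y', y', \ldots)$ are distinct valid paths. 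Otherwise $y$ is irreflexive, and seriality supplies a strict successor $z$ of $y$ with $z \neq y$, giving the distinct paths $(x, y, y, y, \ldots)$ and $(x, y, z, \ldots)$, the latter continued to infinity by seriality.

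Combining these observations, in the serial case $\vec W$ is non-empty (since constant sequences are paths), compact, perfect, and totally disconnected, so Theorem~\ref{brouwer} yields a homeomorphism between $\vec W$ and the Cantor set. For the non-serial case I would construct an explicit embedding into the Cantor set $\{0,1\}^\omega$: fix an injection $s \colon W \hookrightarrow \{0,1\}^m$ for some $m$ and define $\Phi \colon \vec W \to \{0,1\}^\omega$ by concatenation $\Phi(\vec w) = s(w_0) s(w_1) s(w_2) \cdots$; this map is injective and continuous, and since $\vec W$ is compact and $\{0,1\}^\omega$ is Hausdorff, $\Phi$ is a homeomorphism onto its image, which is closed by compactness. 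The main obstacle is the perfectness argument: one must use both seriality and the hypothesis that reflexive clusters have at least two points, since without the former an irreflexive dead-end would yield an isolated constant path, and without the latter an isolated reflexive singleton would similarly be an isolated point of $\vec W$.
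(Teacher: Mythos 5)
Your proof is correct. For the serial case you follow essentially the paper's route: verify non-emptiness, perfectness, compactness and total disconnectedness and invoke Theorem~\ref{brouwer}. Your compactness argument (closedness in the product space $W^\omega$ plus Tychonoff for a finite discrete factor) is just a repackaging of the paper's K\"onig's-lemma argument, and your perfectness argument uses the same ingredients (seriality plus the two-point hypothesis on reflexive clusters, with constant tails being legal because paths are ordered by $\sqsubseteq$); if anything, your version is slightly more careful, since producing \emph{two} distinct continuations from $w_n$ guarantees that at least one of the resulting paths actually differs from $\vec w$, a point the paper's construction of a single perturbed path glosses over. Where you genuinely diverge is the non-serial case: the paper adds a two-point reflexive cluster above all of $W$ to obtain a serial frame $W_\infty$, applies the first part to get $\vec W_\infty$ homeomorphic to the Cantor set, and observes that $\vec W$ sits inside it as a closed subset; you instead build an explicit embedding $\Phi\colon \vec W\to\{0,1\}^\omega$ by block-encoding each coordinate and use the compact-to-Hausdorff argument to conclude that $\Phi$ is a closed embedding. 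The paper's reduction is shorter and reuses the serial case; your embedding is more self-contained and makes the closed copy inside $\{0,1\}^\omega$ completely explicit, at the (mild) cost of having to argue injectivity and continuity of the coding map. Both are valid.
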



\begin{proof}
First assume that $\sqsubset$ is serial.
We have that $\vec W$ is non-empty since $W$ is, it is perfect since for every $\vec w$ and $\varepsilon>0$ we can find $\vec v\neq\vec w$ such that $\delta(\vec v,\vec w)<\varepsilon$; just take $n$ so that $2^{-n}<\varepsilon$ and choose $v\sqsupset w_n$ such that $v\neq w_n$, which exists since $\sqsubset$ is serial and by assumption every reflexive cluster has at least two elements.
Then, for $\vec v :=(w_0,\ldots,w_n,v,v,\ldots)$, we readily see that $\delta(\vec v,\vec w)<\varepsilon$ as needed.
$\vec W$ is compact by König's lemma since the basic opens form a finitely branching tree, and it is totally disconnected since if $A\subseteq \vec W$ has at least two elements, let $n$ be such that $\delta(\vec w,\vec v)=2^{-{n}}$.
Choosing $\varepsilon$ so that $2^{-n-1} <\varepsilon< 2^{-n}$, we see that $B_\varepsilon(\vec w)$ and its complement are both open, but one includes $\vec w$ and one $\vec v$, so $\vec W$ is totally disconnected.
Thus Theorem \ref{brouwer} tells us that $\vec W$ is homeomorphic to the Cantor set.

If $\sqsubset$ is not serial consider $W_\infty$ defined by adding a cluster of two reflexive points $\{u_\infty,v_\infty\}$ which are above every element of $W$.
By the previous item, $\vec W_\infty$ is homeomorphic to the Cantor set, and it is easy to see that $\vec W$ is a closed subset of $\vec W_\infty$.
\end{proof}

Completeness for the Cantor space then follows (see Theorem~\ref{theoCQComp} below).
 Finally, we prove completeness for subspaces of the rational numbers, using the following.

\begin{proposition}[Sierpinski \cite{sierpinski}]\label{sierpinski}
Every two perfect countable metric spaces are homeomorphic to each other.
\end{proposition}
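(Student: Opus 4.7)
The plan is to adapt the classical back-and-forth argument (used for example to identify $\mathbb Q$ as the unique countable dense linear order without endpoints) to build a homeomorphism $h\colon X \to Y$ by alternately extending a partial bijection while maintaining a system of matched refining clopen partitions.

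First I would establish two preliminaries about a countable perfect metric space $Z$. (a) $Z$ is zero-dimensional: for any $z \in Z$ and $\varepsilon > 0$, the countable set $\{\delta(z,y) : y \in Z\}$ of distances fails to attain some $r \in (0,\varepsilon)$, so the ball $B_r(z)$ is clopen. (b) Every nonempty clopen subset of $Z$ can, for any prescribed $\varepsilon > 0$, be partitioned into countably infinitely many nonempty clopen pieces of diameter less than $\varepsilon$, each again a countable perfect metric space; perfection is inherited because the subset is open in $Z$.

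Next, I would enumerate $X = \{x_i\}_{i<\omega}$ and $Y = \{y_j\}_{j<\omega}$ and construct by induction on $n$ a finite partial bijection $h_n \colon A_n \to B_n$ with $A_n \subseteq X$ and $B_n \subseteq Y$, together with matched clopen partitions $\mathcal U_n$ of $X$ and $\mathcal V_n$ of $Y$ and a bijection $\phi_n \colon \mathcal U_n \to \mathcal V_n$, such that $\mathcal U_{n+1}$ and $\mathcal V_{n+1}$ refine $\mathcal U_n$ and $\mathcal V_n$ compatibly with $\phi_n$, and every $a \in A_n$ lies in a piece $U_n(a)$ of $\mathcal U_n$ of diameter less than $1/n$ that $\phi_n$ pairs with a diameter-$<1/n$ piece of $\mathcal V_n$ containing $h_n(a)$. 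At odd stage $2n+1$ I take the least-indexed $x_i \notin A_{2n}$, choose any $y_j \in \phi_{2n}(U_{2n}(x_i)) \setminus B_{2n}$ (the difference is nonempty, in fact infinite, since the piece is itself a countable perfect metric space by (b)), set $h_{2n+1}(x_i) = y_j$, and refine both partitions via (b) so that $x_i$ and $y_j$ lie in matched small-diameter clopen pieces. At even stages I perform the dual operation on the least-indexed unused $y_j$.

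The homeomorphism is then $h = \bigcup_n h_n$. Bijectivity follows from the back-and-forth alternation, and continuity of $h$ and $h^{-1}$ from the fact that $h$ maps the clopen neighborhood $U_n(x)$ of $x = x_i$ bijectively onto the matched clopen neighborhood $\phi_n(U_n(x))$ of $h(x)$, both of diameter less than $1/n$ once $n$ is large enough. The main technical hurdle is primarily a bookkeeping one: verifying at each stage that the refined partitions remain well-defined and coherently matched, and that the points $y_j$ (respectively $x_i$) can always be found inside the prescribed clopen set. Both issues reduce to the observation that every nonempty clopen subset of a perfect countable metric space is itself nonempty, perfect, and countable, and so admits the infinite countable clopen refinements supplied by preliminary (b).
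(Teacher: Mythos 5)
Your argument is essentially correct, but note that the paper does not prove this statement at all: it is quoted as a classical result of Sierpinski with a citation, so there is no in-paper proof to compare against. What you have reconstructed is the standard back-and-forth proof of Sierpinski's theorem (zero-dimensionality from the countability of the distance set, matched refining clopen partitions, alternating extension of a finite partial bijection), and the skeleton is sound: bijectivity comes from the alternation, and bicontinuity from the fact that eventually $h$ carries the diameter-$<1/n$ clopen piece of $x$ onto the matched diameter-$<1/n$ clopen piece of $h(x)$.

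The one step that deserves more care is the claim in your preliminary (b) that every nonempty clopen subset can be partitioned into \emph{countably infinitely many} nonempty clopen pieces. This is not a throwaway refinement --- it is exactly what lets you biject the two refining partitions at each stage --- and the naive construction fails: if you repeatedly split off clopen pieces converging to a point $v$ (say annuli $\{y : s_{n+1} < \delta(v,y) < s_n\}$ with $s_n \downarrow 0$), the residual $\{v\}$ is not open, so you do not get a partition. The correct justification is that a nonempty countable perfect metric space is never compact (e.g.\ by Baire category, some singleton would otherwise have nonempty interior), so it admits a clopen cover with no finite subcover; taking a countable subcover and disjointifying yields infinitely many nonempty clopen pieces, which you can first intersect with a small-diameter clopen partition to control diameters. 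With that supplied, and with the routine bookkeeping ensuring that distinct already-matched points land in distinct matched subpieces when you refine, your proof goes through. Finally, the statement as quoted in the paper tacitly assumes the spaces are nonempty; your argument, like Sierpinski's, needs that hypothesis.
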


For this, we focus our attention on a countable subspace of $\vec W$; namely, those sequences that are {\em eventually constant.}
Given a frame $\langle W,\sqsubset\rangle$, define $\vec W_0$ to be the set of all $\vec w \in \vec W$ such that there is $n\in\mathbb N$ such that $w_n=w_m$ for all $n>m$.
Clearly, $\vec W_0$ is countable if $W$ is finite, and it inherits the metric (which we denote $\delta_0$) from $\vec W$.

\begin{proposition}
Let $\mathfrak F = \langle W,\sqsubset   ,g\rangle$ be a finite story such that every reflexive cluster has at least two elements and let $\lim$ be a limit assignment on $\mathfrak F $.
The structure $\vec{\mathfrak F} = \langle \vec W_0 ,\delta_0,\vec g_0\rangle$ is a dynamic metric system, and $\lim_0\colon \vec W_0 \to W$ is a dynamic $p$-morphism, where the subindex $0$ denotes the restriction to $\vec W_0$.
Moreover, if $g$ is immersive then so is $\vec g_0$.
\end{proposition}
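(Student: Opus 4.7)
The plan is to observe that the proof of Proposition \ref{propCantorMorphism} restricts almost verbatim to the subspace $\vec W_0$ of eventually constant paths, and to check each required property in turn. First I would verify that $\vec W_0$ is closed under $\vec g$: if $\vec w \in \vec W_0$ stabilises at $w$ from index $n$ onwards, then $\vec g(\vec w) = (g(w_i))_{i=0}^\infty$ stabilises at $g(w)$ from the same index, so $\vec g(\vec w) \in \vec W_0$ and $\vec g_0$ is well-defined. Continuity of $\vec g_0$ with respect to the restricted metric $\delta_0$ follows from the Lipschitz-type observation used for $\vec g$ (coincidence on the first $n$ coordinates is preserved by componentwise application of $g$), and the identity $\lim_0 \circ \vec g_0 = g \circ \lim_0$ is immediate from the corresponding identity for $\lim$ and $\vec g$.

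For the forth condition of $\lim_0$ as a dynamic $p$-morphism, the argument in Proposition \ref{propCantorMorphism} produces, for any $\vec w$, a radius $\varepsilon > 0$ such that every $\vec v \in B_\varepsilon(\vec w) \setminus \{\vec w\}$ satisfies $\lim \vec w \sqsubset \lim \vec v$; restricting to $\vec v \in \vec W_0$ only strengthens the hypothesis, so the conclusion transfers without modification.

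The decisive step is the back condition, and this is where one must check that the existing construction does not leave $\vec W_0$. Given $\vec w \in \vec W_0$ with $v = \lim_0 \vec w$, a successor $v' \sqsupset v$, and $\varepsilon > 0$, I would reuse the explicit witnesses from Proposition \ref{propCantorMorphism}: either $\vec u = (w_0, \ldots, w_n, v', v', \ldots)$ when $v' \neq v$, or $\vec u = (w_0, \ldots, w_n, v'', v, v, \ldots)$ when $v' = v$ is reflexive and $v''$ is a second point in its cluster (guaranteed to exist by the hypothesis on reflexive clusters). Both candidates are eventually constant, hence lie in $\vec W_0$, and the verifications that $\delta_0(\vec u, \vec w) < \varepsilon$ and $\lim_0 \vec u = v'$ are unchanged. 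Finally, for the immersive case the local injectivity argument around the ball $B_1(\vec w)$ in Proposition \ref{propCantorMorphism} uses only that $g$ is injective on each moment $|\gog_i|$ and that points of $B_1(\vec w)$ are paths through the same $\gog_i$ as $\vec w$; both facts survive the restriction to $\vec W_0$. I do not expect any serious obstacle: the proof works because the witnesses already constructed in the parent proposition happen to be eventually constant, so no new combinatorics is needed.
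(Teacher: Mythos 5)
Your proposal is correct and follows essentially the same route as the paper: the paper's proof likewise notes that all properties except the back condition are inherited from $\vec W$, and that the witnesses constructed in Proposition \ref{propCantorMorphism} are eventually constant, hence lie in $\vec W_0$. Your additional explicit checks (closure of $\vec W_0$ under $\vec g$, the forth condition only getting easier under restriction, and the local-injectivity argument surviving restriction) are sound elaborations of the same argument.
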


\begin{proof}
Most of the required properties are inherited from $\vec W$ (e.g., continuity of $\vec g_0$), with the exception of the `back' condition.
However, all of the witnesses produced in the proof of Proposition~\ref{propCantorMorphism} were eventually constants, so the same proof works in this context.
\end{proof}

The following is proven similarly to Lemma \ref{lemmIsCantor}, but with $\vec W$ replaced with $\vec W_0$ and by using Theorem~\ref{sierpinski}.

\begin{lemma}\label{lemmIsQ}
Let $(W,\sqsubset)$ be a non-empty, finite, transitive frame where every reflexive cluster has at least two points.
If $(W,\sqsubset)$ is serial, then $\vec W_0$ is homeomorphic to $\mathbb Q$, and if $(W,\sqsubset)$ is transitive but not necessarily serial, then $\vec W$ is homeomorphic to a subspace of $\mathbb Q$.
\end{lemma}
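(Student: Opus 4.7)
The plan is to mirror the proof of Lemma \ref{lemmIsCantor} but substitute Sierpi\'nski's theorem (Proposition \ref{sierpinski}) for Brouwer's theorem, working with the countable subspace $\vec W_0$ of eventually constant paths rather than with $\vec W$ itself. Note that Sierpi\'nski's theorem only requires the space to be countable, non-empty (for the homeomorphism statement), metric and perfect, so neither compactness nor total disconnectedness needs to be verified, which simplifies the argument relative to the Cantor case.

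First I would handle the case where $\sqsubset $ is serial. Countability of $\vec W_0$ is immediate since $W$ is finite, so the set of eventually constant sequences is a countable union of finite sets. Non-emptiness follows because in a finite transitive serial frame every initial segment $w_0 \sqsubseteq w_1 \sqsubseteq \ldots $ can be extended until it enters a terminal cluster; such a cluster must be reflexive (a singleton irreflexive point would contradict seriality of that point together with its terminality), so we may stabilise the path at any fixed reflexive $w^\ast$. The crux is to verify that $\vec W_0$ has no isolated points. Given $\vec w \in \vec W_0$ with eventual value $w^\ast$ and any $\varepsilon > 0$, choose $n$ so large that $w_n = w^\ast$ and $2^{-n} < \varepsilon$. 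By the hypothesis that every reflexive cluster has at least two points, there exists $v\neq w^\ast$ in the same cluster as $w^\ast$; such a $v$ is automatically reflexive, so $\vec v := (w_0,\ldots,w_n,v,v,\ldots)$ is a genuine element of $\vec W_0$ with $\delta(\vec v, \vec w) \leq 2^{-n-1} < \varepsilon$ and $\vec v \neq \vec w$. Proposition~\ref{sierpinski} then yields $\vec W_0 \cong \mathbb Q$.

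For the non-serial case, I would reuse the trick from Lemma \ref{lemmIsCantor}: form $W_\infty$ by adjoining a two-element reflexive cluster $\{u_\infty, v_\infty\}$ above every element of $W$. Then $\langle W_\infty, \sqsubset\rangle$ is serial, transitive, and still has every reflexive cluster of size at least two, so the previous case gives $\vec{W_\infty}_0 \cong \mathbb Q$. The inclusion $\vec W_0 \hookrightarrow \vec{W_\infty}_0$ is isometric because the metric is defined intrinsically by the first coordinate of disagreement, so $\vec W_0$ is homeomorphic to a subspace of $\mathbb Q$.

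I expect the main subtlety to be the perfectness step. The delicate point is that the approximating sequence $\vec v$ must itself lie in $\vec W_0$, which requires $v$ to be reflexive (to provide a legitimate constant tail) and $\sqsupseteq$-compatible with $w^\ast$ (to form a valid $\sqsubseteq$-chain). Being in the same reflexive cluster as $w^\ast$ supplies both conditions simultaneously, and this is exactly where the standing hypothesis on reflexive clusters is used; dropping it would leave every eventually constant path isolated in $\vec W_0$ and break the argument.
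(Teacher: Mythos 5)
Your overall strategy coincides with the paper's: Lemma \ref{lemmIsQ} is obtained by rerunning the proof of Lemma \ref{lemmIsCantor} with $\vec W$ replaced by $\vec W_0$ and Brouwer's characterisation replaced by Sierpi\'nski's theorem (Proposition \ref{sierpinski}), and you are right that this dispenses with compactness and total disconnectedness. However, your perfectness step has a genuine gap. You perturb $\vec w\in\vec W_0$ by replacing its tail with a point $v\neq w^\ast$ lying \emph{in the same cluster as} the eventual value $w^\ast$. This only works when $w^\ast$ is reflexive: an irreflexive point forms a singleton cluster (if $w^\ast\sqsubset v\sqsubset w^\ast$ for some $v$, transitivity would give $w^\ast\sqsubset w^\ast$), so no such $v$ exists. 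Paths that are eventually constant at an irreflexive point are legitimate elements of $\vec W_0$, because a path only requires $w_i\sqsubseteq w_{i+1}$ and $\sqsubseteq$ is the \emph{reflexive closure} of $\sqsubset$; for instance, the constant path at any irreflexive point of a serial frame. Tellingly, your perfectness argument never invokes seriality, yet seriality is indispensable at exactly this step: for the one-point irreflexive frame (which is not serial) $\vec W_0$ is a singleton and hence not perfect.

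The repair is precisely what the paper does for $\vec W$ in the proof of Lemma \ref{lemmIsCantor}: given $n$ with $2^{-n}<\varepsilon$, choose $v\sqsupset w_n$ with $v\neq w_n$. Such a $v$ exists because $w_n$ has some $\sqsubset$-successor by seriality, and if every successor were $w_n$ itself then $w_n$ would be reflexive and its cluster would contain a second point by hypothesis. The path $(w_0,\ldots,w_n,v,v,\ldots)$ is eventually constant, hence lies in $\vec W_0$, lies in $B_\varepsilon(\vec w)$, and can be taken distinct from $\vec w$ (there are always at least two such continuations from position $n+1$, and at most one of them can equal $\vec w$). The remainder of your argument --- countability, non-emptiness (which is in fact immediate, since constant paths are always paths under the reflexive closure), and the reduction of the non-serial case to the serial one via the isometric inclusion of $\vec W_0$ into $(\vec W_\infty)_0$ --- is correct.
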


In conclusion, we obtain the following completeness results.

\begin{theorem}\label{theoCQComp}
Let $X$ be either $\mathbb Q$ or the Cantor set.
\begin{enumerate}
\item ${\bf K4C}^\infty$ is complete for the class of dynamical systems based on a closed subspace of $X$.

\item ${\bf K4I}^\infty$ is complete for the class of injective dynamical systems based on a closed subspace of $X$.

\item ${\bf K4DC}^\infty$ is complete for the class of dynamical systems based on $X$.

\item ${\bf K4DI}^\infty$ is complete for the class of injective dynamical systems based on $X$.

\end{enumerate}
\end{theorem}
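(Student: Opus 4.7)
The plan is as follows. Soundness in each case follows from Lemma~\ref{lemCH} together with the soundness of the tangled axioms over the relevant classes of derivative spaces. For completeness, let $\Lambda$ be one of the four logics and suppose that $\Lambda \not\vdash \varphi$. By Theorem~\ref{wk4c}, there is a finite dynamic $\Lambda$-frame $\mathfrak F = \langle W,\sqsubset,g\rangle$, a valuation $\nu$, and a point $r \in W$ such that $(\mathfrak F,\nu),r \not\models \varphi$; moreover, $\mathfrak F$ is serial whenever $\Lambda$ contains $\rm D$ and $g$ is immersive whenever $\Lambda$ contains ${\rm C}_\dd$. The goal is to transport this refutation into a dynamical system on (a closed subspace of) $X$.

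First I would apply the $\oplus$-construction of Proposition~\ref{propOplus} to obtain $\mathfrak F_\oplus$: this is still a finite story, preserves seriality and immersivity, and has the crucial property that every reflexive cluster contains at least two points; furthermore, the projection $\mathfrak F_\oplus \to \mathfrak F$ is a surjective dynamic $p$-morphism, so by Proposition~\ref{propDmap} (pulling back $\nu$) the frame $\mathfrak F_\oplus$ also refutes $\varphi$. Next, equip $\mathfrak F_\oplus$ with a limit assignment (which exists because $\mathfrak F_\oplus$ is still a story), and form the space of paths $\vec{\mathfrak F}_\oplus$. By Proposition~\ref{propCantorMorphism}, $\vec{\mathfrak F}_\oplus$ is a dynamic metric system, the map $\lim$ is a surjective dynamic $p$-morphism onto $\mathfrak F_\oplus$, and immersivity of $g$ transfers to $\vec g_\oplus$; another application of Proposition~\ref{propDmap} shows that $\vec{\mathfrak F}_\oplus$ refutes $\varphi$.

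The topological identification is then supplied by Lemma~\ref{lemmIsCantor}: since every reflexive cluster of $\mathfrak F_\oplus$ has at least two points, the path space $\vec W_\oplus$ is homeomorphic to the Cantor set when $\mathfrak F_\oplus$ is serial, and to a closed subspace of the Cantor set in general. Composing the refuting valuation on $\vec W_\oplus$ with this homeomorphism yields a dynamical system on the Cantor set (in the $\rm D$-cases) or on a closed subspace thereof (in the non-$\rm D$-cases) refuting $\varphi$, which is continuous in general and immersive when $\Lambda$ contains ${\rm C}_\dd$. This establishes items~(1)--(4) for the Cantor set. For the rational versions, the same argument applies verbatim once $\vec W_\oplus$ is replaced by the countable subspace $\vec W_{\oplus,0}$ of eventually constant paths and Lemma~\ref{lemmIsCantor} is replaced by Lemma~\ref{lemmIsQ}, using Sierpi\'nski's theorem to identify the resulting perfect countable metric space with $\mathbb Q$ (or a subspace of $\mathbb Q$ in the non-serial case).

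The main technical obstacle is not located in this argument itself, which is essentially a clean assembly of the preceding lemmas, but in verifying that every property needed for the final identification is preserved all the way down the chain $\mathfrak F \leadsto \mathfrak F_\oplus \leadsto \vec{\mathfrak F}_\oplus \leadsto X$: in particular, that $\oplus$ preserves being a story, seriality, and immersivity (Proposition~\ref{propOplus}); that paths inherit immersivity from $g$ via the local-injectivity argument in Proposition~\ref{propCantorMorphism}; and that seriality is exactly what removes the need to pass to a proper closed subspace in Lemmas~\ref{lemmIsCantor} and~\ref{lemmIsQ}. Once these pieces are in place, the completeness conclusion follows immediately by pulling back the refuting valuation along the composed dynamic $p$-morphism.
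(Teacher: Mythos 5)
Your proposal is correct and follows essentially the same route as the paper: Kripke completeness via a finite story (Theorem~\ref{wk4c}), reflexive-point duplication ($\oplus$), the path space with a limit assignment as a dynamic $p$-morphic preimage, and the identification with the Cantor set via Lemma~\ref{lemmIsCantor} (resp.\ with $\mathbb Q$ via $\vec W_0$ and Lemma~\ref{lemmIsQ}). The only nuance worth flagging is that the limit-assignment and path-space constructions need the refuting frame to be a \emph{story} (with almost injectivity and cluster preservation), not merely a finite dynamic $\Lambda$-frame, which is indeed what Proposition~\ref{propExistsStory} delivers inside the proof of Theorem~\ref{wk4c}.
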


\section{Conclusion}

We have developed dynamic topological logics based on the topological $\mu$-calculus, in its tangled presentation, and introduced various axiomatic systems that are sound and complete for their intended interpretations over dynamical systems based on a metric space.
We showed that these completeness results in particular apply to the Cantor space and the rational numbers -- two `canonical' metric spaces.

One may also consider interpretations based on the real line, or on Euclidean spaces in general.
Fern\'andez-Duque~\cite{david2} showed that $\bf S4C$ is complete for the plane, but we cannot expect similar results for ${\bf K4DC}^\infty$, in view of results by Lucero-Bryan and Shehtman~\cite{Lucero-Bryan13,ShehtmanDerived}.
However, it may well be possible to define extensions of ${\bf K4DC}^\infty$ that are complete for Euclidean spaces.

Finally, there is the issue of extending our language to include the `henceforth' operator.
It is our expectation that the $d$-logic of all dynamic metric systems may be axiomatised using the tangled derivative, much as the tangled closure was used to provide an axiomatisation of the closure-based $\mathbf{DTL}$~\cite{david3}.
Fernández-Duque showed how the tangled closure is essential in axiomatising $\mathbf{DTL}$ with the `henceforth' operator, and in future work we plan to show how the same can be done for $\mathbf{DTL}$ with the Cantor derivative.
This follows the work of Fernández-Duque and Montacute who provided a complete axiomatisation for $\mathbf{DTL}$ with the Cantor derivative and `henceforth' for the class of \emph{scattered spaces} \cite{Untangled}.
In this context, we are specifically interested in axiomatising the class of chaotic systems.
We believe that the present work is an important step towards achieving this goal.

\bibliographystyle{plain} 
\bibliography{biblio}

\end{document}